\keywords{K- and L-Theory, C*-Algebras, Homotopy theory}
\subjclass[2010]{19K35, 46L80, 55P42}
\definecolor{darkblue}{rgb}{0,0,0.6}
\newcounter{commentcounter}
\protected\def\ignorethis#1\endignorethis{}
\let\endignorethis\relax
\newtheoremstyle{thms}
	{}{}{\itshape}{}{\bfseries }{}{ }
	{\thmname{#1} \thmnumber{#2}. \thmnote{\bfseries{[#3]}}}
\newtheoremstyle{thms2}
	{}{}{\itshape}{}{\bfseries }{}{ }
	{\thmname{#1}\thmnumber{#2}. \thmnote{\bfseries{[#3]}}}
\newtheoremstyle{name}
	{}{}{\itshape}{}{\bfseries }{}{ }
	{\thmname{#1}\thmnumber{#2}\thmnote{\bfseries{[#3]}}}
\newtheoremstyle{defs}
	{}{12pt}{\normalfont}{}{\bfseries }{}{ }
	{\thmname{#1} \thmnumber{#2}. \thmnote{\bfseries{(#3)}}}
\newtheoremstyle{defs2}
	{}{12pt}{\normalfont}{}{\bfseries }{}{ }
	{\thmname{#1}\thmnumber{#2}. \thmnote{\bfseries{(#3)}}}
\newtheoremstyle{rmk}
	{}{}{\normalfont}{}{\itshape }{}{ }{\thmname{#1}. \thmnote{#3}}
\newtheoremstyle{claim}
	{}{}{\normalfont}{}{\itshape}{}{ }{\thmname{#1} \thmnumber{#2}. \thmnote{#3}}
\theoremstyle{thms}
\newtheorem{Prop}{Proposition}[section]
\newtheorem{Thm}[Prop]{Theorem}%[section]
\newtheorem{Lemma}[Prop]{Lemma}%[section]
\newtheorem{Cor}[Prop]{Corollary}%[section]
\theoremstyle{thms2}
\newtheorem*{ThmA}{Theorem A}
\newtheorem*{ThmB}{Theorem B}
\newtheorem*{ThmC}{Theorem C}
\newtheorem*{ThmD}{Theorem D}
\newtheorem*{ThmE}{Theorem E}
\newtheorem*{Conj}{Conjecture}%[section]
\theoremstyle{name}
\theoremstyle{defs}
\newtheorem{Def}[Prop]{Definition}%[section]
\theoremstyle{defs2}
\newtheorem*{ackn}{Acknowledgements}
\theoremstyle{rmk}
\newtheorem*{Rmk}{Remark}
\theoremstyle{rmk}
\theoremstyle{rmk}
\theoremstyle{claim}
\newcommand{\Z}{\mathbb{Z}}
\newcommand{\F}{\mathbb{F}}
\newcommand{\Q}{\mathbb{Q}}
\newcommand{\C}{\mathbb{C}}
\newcommand{\R}{\mathbb{R}}
\newcommand{\cc}{\mathrm{C^*Alg}}
\newcommand{\rc}{\mathrm{R^*Alg}}
\newcommand{\ccu}{\cc^{\mathrm{unit}}}
\newcommand{\Sp}{\mathrm{Sp}}
\renewcommand{\a}{\mathrm{alg}}
\newcommand{\Ab}{\mathrm{Ab}}
\newcommand{\ringsinv}{\mathrm{Ring}_{\mathrm{inv}}}
\newcommand{\KK}{\mathrm{KK}}
\newcommand{\Fun}{\mathrm{Fun}}
\newcommand{\HZ}{\mathrm{H}\Z}
\newcommand{\MSO}{\mathrm{MSO}}
\newcommand{\id}{\mathrm{id}}
\newcommand{\N}{N}
\newcommand{\s}{\mathscr{S}}
\newcommand{\Grp}{\mathrm{Grp}}
\newcommand{\e}{\mathbb{E}}
\newcommand{\fib}{\mathrm{fib}}
\newcommand{\wt}{\widetilde}
\newcommand{\adj}{[\tfrac{1}{2}]}
\newcommand{\map}{\mathrm{map}}
\newcommand{\Map}{\mathrm{Map}}
\newcommand{\Hom}{\mathrm{Hom}}
\newcommand{\Ext}{\mathrm{Ext}}
\newcommand{\Cat}{\mathrm{Cat}}
\newcommand{\lex}{\mathrm{Lex}}
\newcommand{\lax}{\mathrm{lax}}
\newcommand{\Alg}{\mathrm{Alg}}
\renewcommand{\c}{\mathscr{C}}
\newcommand{\Nfin}{\mathrm{NFin}_*}
\newcommand{\Ob}{\mathrm{Ob}}
\newcommand{\Set}{\mathrm{Set}}
\renewcommand{\b}{\mathcal{B}}
\newcommand{\h}{\mathcal{H}}
\renewcommand{\N}{\mathrm{N}}
\newcommand{\coker}{\mathrm{coker}}
\renewcommand{\S}{\mathbb{S}}
\newcommand{\Gpd}{\mathrm{Gpd}}
\newcommand{\BC}{\mathrm{BC}}
\newcommand{\FJ}{\mathrm{FJ}}
\newcommand{\Dd}{\mathscr{D}}
\newcommand{\Cc}{\mathscr{C}}
\newcommand{\MU}{\mathrm{MU}}
\newcommand{\BU}{\mathrm{BU}}
\newcommand{\calK}{\mathcal{K}}
\renewcommand{\emptyset}{\varnothing}
\title{On the Relation between K- and L-Theory of $C^*$-Algebras}
\author[M.~Land]{Markus Land}
\address{Rheinische Friedrich-Wilhelms Universit\"at Bonn, Mathematisches Institut, Endenicher Allee 60, 53115 Bonn, Germany}
\email{land@math.uni-bonn.de}
\author[T.~Nikolaus]{Thomas Nikolaus}
\address{Max Planck Institut f\"ur Mathematik, Vivatsgasse 7, 53111 Bonn, Germany}
\email{nikolaus@math.uni-bonn.de}
\date{\today}
\begin{document}

\begin{abstract}
We prove the existence of a map of spectra $\tau_A\colon kA \to \ell A$ between connective topological $K$-theory and connective algebraic $L$-theory of a complex $C^*$-algebra $A$ which is natural in $A$ and compatible with multiplicative structures. We determine its effect on homotopy groups and as a consequence obtain a natural equivalence $KA\adj \xrightarrow{\simeq} LA\adj$ of periodic $K$- and $L$-theory spectra after inverting $2$. We show that this equivalence extends to $K$- and $L$-theory of real $C^*$-algebras. Using this we give a comparison between the real Baum-Connes conjecture and the $L$-theoretic Farrell-Jones conjecture. We conclude that these conjectures are equivalent after inverting $2$ if and only if a certain completion conjecture in $L$-theory is true.
\end{abstract}

\maketitle
\tableofcontents

\begin{section}{Introduction}

\subsection{Motivation}
One of the main motivations of this article is to give a precise comparison between the Baum-Connes conjecture in real topological $K$-theory and the  Farrell-Jones conjecture in algebraic $L$-theory. We briefly recall the setup of these conjectures. For this, let $G$ be a countable discrete group. We then consider the integral group ring $\Z G$ with its canonical involution induced by $g \mapsto g^{-1}$. Similarly we can consider the reduced group $C^*$-algebra $C^*_r G$ which is a specific completion of the complex group ring $\C G$ or its real counterpart $C^*_r(G;\R)$ which is a completion of $\R G$. The Baum-Connes conjecture then predicts that a certain assembly map
\[\xymatrix{KO_*^G(\underline{E}G) \ar[r]^-\BC & KO_*(C^*_r(G;\R))}\]
is an isomorphism. Here, the left hand side denotes the $G$-equivariant $KO$-homology of the classifying space for finite subgroups of $G$. Similarly the $L$-theoretic Farrell-Jones conjecture predicts that the assembly map
\[\xymatrix{(L^q\Z)^G_*(\uuline{E}G) \ar[r]^-\FJ & L^q_*(\Z G)}\]
is an isomorphism. Here the left hand side is the equivariant homology theory associated to quadratic $L$-theory of the integers evaluated on the classifying space for virtually cyclic subgroups. We suppress the decoration in $L$-theory, but implicitly mean the decoration $\langle -\infty \rangle$ in the introduction. See  \cite{DavisLueck} for details.

Both these conjectures have been studied extensively due to their intimate relation to other prominent conjectures, such as the Borel conjecture, the Novikov conjecture, the Kadison conjecture and the stable Gromov-Lawson-Rosenberg conjecture. For details about the construction, status and relation to these other conjectures we recommend the survey \cite{LueckReich}. 
We recall from \cite{DavisLueck} that these assembly maps are constructed using the fact that the $K$- resp. the $L$-groups are homotopy groups of $K$- resp. $L$-theory spectra. 
\\

When trying to relate these two conjectures a first step is to relate the codomains of the assembly maps, i.e the two groups $KO_*(C^*_r(G;\R))$ and $L^q_*(\Z G)$. The canonical inclusion $\Z G \subseteq C^*_r(G;\R)$ induces a map in quadratic $L$-theory, so that one is naturally led to try to relate the real $K$-theory group of the group $C^*$-algebra to its $L$-theory. Indeed, the relation between $K$- and $L$-groups of $C^*$-algebras  in general has been greatly studied, see e.g. \cite{Miller,Karoubi,Rosenberg}: for complex $C^*$-algebras these two groups are  naturally isomorphic. Over the real numbers the precise relation is more complicated, but it turns out that after inverting $2$, the $KO$- and $L$-groups are still isomorphic, see for instance \cite[Theorem 1.11]{Rosenberg} or Theorem C in \cref{ThmC}. 

Unfortunately, this is not enough to deduce that the assembly maps in question are isomorphic. Indeed by the construction of the assembly maps as in \cite{DavisLueck}, one needs to understand the relation between the two spectrum valued functors $KO$ and $L$ and not just their homotopy groups.

Our main contribution to this is that we show that the two functors $KO$ and $L$ become equivalent after inverting 2. In particular we obtain a commutative diagram relating the BC-assembly map and the FJ-assembly map after inverting 2. In fact we also study the complex case and even integral results before inverting 2. We will give precise statements of our results in \cref{ThmD}. But let us first comment on the relation to previous work.\\

Our strategy of relating the assembly maps is taken from \cite[Proposition 3.19]{LueckReich} and we thank Wolfgang L\"uck for sharing his ideas with us.  For the key input, namely the 
 equivalence of spectrum valued $K$- and $L$-theory functors after inverting 2, the authors refer to a result of Rosenberg, \cite[Theorem 1.11 and Theorem 2.1]{Rosenberg}. Indeed, in \cite[Theorem 2.7]{Rosenberg} Rosenberg also considers a diagram relating the Farrell-Jones conjecture to the Baum-Connes conjecture. See also \cite[Page 12]{Rosenberg2} for a similar statement.
 
 However we believe that Rosenberg's proof of the natural equivalence $KO\adj \simeq L\adj$ is not sufficient. The starting point of his argument is to show that there is an equivalence of homotopy ring spectra $KO\adj \simeq L\R\adj$.
 This is essentially concluded from a known equivalence of underlying spaces due to Sullivan (see \cite[Theorem 4.28]{MM}) but we think the arguments given by Rosenberg are not enough to upgrade this to an equivalence of homotopy ring spectra. However it was known by work of Taylor and Williams \cite{Williams} that they are equivalent as spectra. Lurie gives a proof that they are equivalent as homotopy ring spectra in Lecture 25 of \cite{Lurie}.
 
Taking this for granted, Rosenberg then views both $LA\adj$ and $KO(A)\adj$ as module spectra over 
 $KO\adj$. Using the classification of such module spectra due to Bousfield \cite[Theorem 9.6]{Bousfield} one can lift the isomorphism of homotopy groups to an equivalence of spectra. It turns out that this lift is in general not unique, the ambiguity is given by an $\Ext$-group which will generically not vanish. Thus this gives an equivalence $LA\adj \simeq KO(A)\adj$ for every real $C^*$-algebra $A$ individually but there is no reason why one can find an equivalence $KO(A)\adj \simeq LA\adj$ which is natural in $A$, not even in the homotopy category of spectra, let alone in a more structured setting as needed for the comparison of assembly maps.

\subsection{Statement of results}

Throughout this  paper we will freely use the language of $\infty$-categories as developed by Joyal \cite{Joyal} and Lurie \cite{LurieHTT, LurieHA}. We follow Lurie's notational conventions. 
Essentially all of the results are also expressible in other frameworks of abstract homotopy theory, e.g. Quillen model categories. 
\newline

We study the space of maps between $K$ and $L$ viewed as functors between the $\infty$-category $\N\cc$ associated to the 1-category $\cc$ of  complex, separable $C^*$-algebras and  the $\infty$-category $\Sp$ of spectra. See Section \ref{prelims} and the appendix for details about these functors, in particular the fact that they admit lax symmetric monoidal structures.  For the first statement we consider their connective covers, by which mean the functors $k,\ell: \N\cc \to \Sp_{\geq 0}$ obtained by post composing $K$ and $L$ with the connective cover functor $\tau_{\geq 0}: \Sp \to \Sp_{\geq 0}$. 

\begin{ThmA}\label{ThmA}
For every $n \in \Z$ there exists a natural transformation $\tau(n) \colon k \to \ell$, unique up to homotopy, characterized by the property that $\tau(n)_\C \colon \pi_0(ku) \to \pi_0(\ell\C)$ is given by multiplication with $n$. More precisely the map
\[ \xymatrix@R=.3cm{\pi_0\big( \Map_{\Fun(\N\cc,\Sp_{\geq 0})}(k,\ell) \big) \ar[r] & \Z \\ [\eta] \ar@{|->}[r] & \pi_0(\eta_\C)(1)}\]
is a bijection. Moreover, there is an essentially unique  lax symmetric monoidal transformation $\tau \colon k \to \ell$. Its underlying transformation is $\tau(1)$. 
\end{ThmA}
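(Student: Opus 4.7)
The plan is to reduce the computation of $\pi_0 \Map_{\Fun(\N\cc, \Sp_{\geq 0})}(k, \ell)$ to a computation on the single algebra $\C$ by studying the evaluation map
$$\mathrm{ev}_\C : \map_{\Fun(\N\cc, \Sp_{\geq 0})}(k, \ell) \to \map_{\Sp_{\geq 0}}(ku, \ell\C),$$
and to then construct and characterize the lax symmetric monoidal transformation $\tau$ separately using the $\e_\infty$-ring structures on $ku$ and $\ell\C$.

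First I would analyse the target. Using the known descriptions $\pi_*(ku) = \Z[u]$ and $\pi_*(\ell\C) = \Z[v]$ with $|u| = |v| = 2$, both free over $\Z$ and concentrated in even degrees, a Postnikov tower argument (equivalently, the Atiyah-Hirzebruch spectral sequence for $\ell\C$-cohomology of $ku$) collapses: the absence of torsion and the evenness of both sides preclude higher obstructions, so $\pi_0 \map_{\Sp_{\geq 0}}(ku, \ell\C) \cong \Z$ with the isomorphism given by the induced map on $\pi_0$.

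The next step is to show that $\mathrm{ev}_\C$ is a $\pi_0$-bijection. Surjectivity becomes relatively painless once $\tau(1)$ has been constructed (the main task of the paper, alluded to in the introduction): composing $\tau(1)$ with the $\pi_0(\S) = \Z$-action on the mapping spectrum realises all integers. Injectivity --- the statement that if $\eta_\C$ is null on $\pi_0$ then $\eta$ itself is null-homotopic as a natural transformation --- is the heart of the argument, and I expect it to be the main technical obstacle. My preferred route is to exploit the lax monoidal structure, which equips each $k(A)$ with a canonical $ku$-module structure and each $\ell(A)$ with a canonical $\ell\C$-module structure; one then tries to show that any natural transformation is rigidly determined, up to homotopy, by its value on $\C$ via a module-theoretic argument leveraging the freeness and evenness above. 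An alternative, more conceptual, approach would be to localise $\N\cc$ at $KK$-equivalences --- verifying that $\ell$ also inverts them, perhaps using the known agreement of $K$- and $L$-groups for complex $C^*$-algebras --- and to apply an $\infty$-categorical Yoneda lemma on the localised category to directly identify $\Nat(k, \ell)$ with $\ell\C$, whose $\pi_0$ is $\Z$.

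For the lax symmetric monoidal claim, the unit axiom of a lax monoidal natural transformation forces $\tau_\C(1) = 1 \in \pi_0(\ell\C)$, so any lax monoidal $\tau$ has underlying transformation $\tau(1)$. Existence of a lax monoidal enhancement of $\tau(1)$ is supplied by the main construction of the paper, which builds $\tau$ as a lax monoidal natural transformation between compatible lax monoidal models of $K$- and $L$-theory (as developed in the appendix). Essential uniqueness should then follow from an obstruction-theoretic computation, analogous in spirit to the one carried out for the underlying transformations, showing that the space of lax monoidal enhancements of the fixed underlying transformation $\tau(1)$ is contractible --- once more, by vanishing of the relevant obstruction groups coming from the freeness and even concentration of $\pi_*(ku)$ and $\pi_*(\ell\C)$.
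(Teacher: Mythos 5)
Your primary route breaks at the first step: the claim that $\pi_0\map_{\Sp_{\geq 0}}(ku,\ell\C)\cong\Z$ is false, and evenness of $\pi_*(ku)$ and $\pi_*(\ell\C)$ does not make the relevant obstructions vanish. In the Postnikov/AHSS computation of $[ku,\ell\C]$ the obstruction groups are $H^{2n}(ku;\pi_{2n}\ell\C)=H^{2n}(ku;\Z)$, and these are controlled by the (co)homology of the \emph{spectrum} $ku$, not by its homotopy groups; they are nonzero for every even $n\geq 0$ (already $H_{2n}(ku;\Q)\cong\Q$, so $H^{2n}(ku;\Z)$ contains a free summand). Concretely, since $\ell\C_\Q\simeq\prod_{n\geq 0}\Sigma^{2n}H\Q$, one has
\[
\pi_0\map(ku,\ell\C_\Q)\;\cong\;\prod_{n\geq 0}H^{2n}(ku;\Q)\;\cong\;\prod_{n\geq 0}\Q,
\]
so $[ku,\ell\C]$ is a very large group, far from being $\Z$, and a map $ku\to\ell\C$ is certainly not determined by its effect on $\pi_0$. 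This means the reduction via $\mathrm{ev}_\C$ to a single mapping spectrum cannot give the theorem: naturality in $A$ is not bookkeeping but is precisely what cuts the enormous space $\map(ku,\ell\C)$ down to $\Omega^\infty\ell\C$, and any argument must use it from the start.

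The route you label ``alternative, more conceptual'' is in fact the only one that works and is the one the paper takes. After verifying that $\ell$ inverts $\KK$-equivalences (via the natural isomorphism $K_0\cong L_0$ of \cref{K groups isomorphic to L groups}) and preserves finite products (\cref{L-theory and products}), one works in the stable $\infty$-category $\KK_\infty=\N\cc[W^{-1}]$ where $k$ is \emph{corepresented} by the tensor unit $\C$ (\cref{K-theory-spectra}), and the additive Yoneda lemma (\cref{Lemma:Yoneda}) gives $\Map_{\Fun(\KK_\infty,\Sp_{\geq 0})}(k,\ell)\simeq\Omega^\infty\ell\C$ in one step; localization fully-faithfulness then transports this back to $\Fun(\N\cc,\Sp_{\geq 0})$. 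For the lax symmetric monoidal statement the paper again avoids obstruction theory entirely: by the stable symmetric monoidal structure on $\KK_\infty$ and corepresentability of $k$ by the unit, $k$ is \emph{initial} in $\Fun^\Pi_{\lax}(\KK_\infty,\Sp_{\geq 0})$ (\cref{corlaxstructure}), so a lax monoidal $\tau\colon k\to\ell$ exists and is essentially unique for purely formal reasons; only identifying it with $\tau(1)$ uses, as you correctly observe, that a lax monoidal transformation must be a ring map on $\pi_0(\C)$. I would also flag that invoking ``the main construction of the paper'' for existence of the lax enhancement is circular in a proof of Theorem A itself.
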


We continue by calculating the effect on homotopy of the multiplicative transformation $\tau$ just considered.

\begin{ThmB}\label{ThmB}
For $i \in \{0,1\}$, all $k \geq 0$, and all $A \in \cc$ there is an exact sequence
\[\xymatrix{0 \ar[r] & \pi_{2k+i}(kA)_{2^k} \ar[r] & \pi_{2k+i}(kA) \ar[r]^-{\tau_A} & \pi_{2k+i}(\ell A) \ar[r] & \frac{\pi_{2k+i}(\ell A)}{2^k\cdot\pi_{2k+i}(\ell A)} \ar[r] & 0}.\]
\end{ThmB}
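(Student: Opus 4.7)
The plan is to leverage the multiplicative structure of $\tau$ provided by Theorem A. Since $\tau\colon k\to \ell$ is lax symmetric monoidal and $\C$ is the tensor unit of $\cc$, the induced map $\tau_\C\colon k\C = ku \to \ell\C$ is an $\e_\infty$-ring map, and for each $A\in\cc$ the map $\tau_A\colon kA \to \ell A$ is a $ku$-module map, where $\ell A$ is viewed as a $ku$-module via $\tau_\C$. Writing $\beta \in \pi_2(ku)$ for the Bott class and $\beta_L$ for a chosen generator of $\pi_2(\ell\C)\cong \Z$, it follows that
\[ \tau_A(\beta \cdot x) = \tau_\C(\beta) \cdot \tau_A(x) \]
for every $x \in \pi_*(kA)$.

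The heart of the argument is the identity $\tau_\C(\beta) = 2\beta_L$, which I expect to be the principal technical obstacle. A priori one has $\tau_\C(\beta) = c\beta_L$ for some $c\in\Z$, since both $\pi_0$ and $\pi_2$ of the two rings are $\Z$ and $\tau_\C$ is a ring map inducing the identity on $\pi_0$; moreover the equivalence after inverting $2$ promised by Theorem C forces $c = \pm 2^n$. To pin down $c = 2$, I plan to combine the uniqueness assertion of Theorem A with an analysis of the two-stage Postnikov truncations of $k$ and $\ell$, reducing the matter to a low-degree computation. An alternative route is to compare $\tau_\C(\beta)$ directly with the generator of $\pi_2(\ell\C)$ arising from the skew-Hermitian form $(x,y)\mapsto i\bar{x}y$ on $\C$; under the identification $\pi_2(ku)=K_0(S\C)$ the Bott class pairs with this generator to produce precisely a factor of $2$.

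Granting $\tau_\C(\beta) = 2\beta_L$, the remainder is formal. By Bott periodicity for topological $K$-theory of C*-algebras, multiplication by $\beta$ is an isomorphism $\pi_i(kA)\xrightarrow{\cong}\pi_{i+2}(kA)$ for $i\geq 0$, and analogously multiplication by $\beta_L$ is an isomorphism on $\pi_{\geq 0}(\ell A)$ by $2$-periodicity of complex $L$-theory. I also need the $k=0$ case of the exact sequence, namely that $\tau_A$ is an isomorphism on $\pi_0$ and $\pi_1$ for every $A\in\cc$; this follows by identifying $\tau_A$ on $\pi_0$ and $\pi_1$ with the classical natural isomorphisms between topological $K$-theory and algebraic $L$-theory of complex C*-algebras cited in the introduction, together with the uniqueness part of Theorem A. Putting everything together, for $x\in\pi_{2k+i}(kA)$ with $i\in\{0,1\}$ one writes $x=\beta^k\cdot y$ for a unique $y\in\pi_i(kA)$ and computes
\[ \tau_A(x) = \tau_\C(\beta)^k\cdot \tau_A(y) = 2^k\,\beta_L^k\cdot \tau_A(y). \]
Since $\beta_L^k$ and $\tau_A|_{\pi_i}$ are isomorphisms, the kernel and image of $\tau_A$ on $\pi_{2k+i}$ identify respectively with the $2^k$-torsion subgroup of $\pi_{2k+i}(kA)$ and with $2^k\cdot \pi_{2k+i}(\ell A)$, yielding the claimed four-term exact sequence.
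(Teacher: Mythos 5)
Your overall strategy matches the paper's: reduce everything to the multiplicativity of $\tau$, the identity $\tau_\C(\beta) = 2b$, and the fact that $\tau_A$ is an isomorphism on $\pi_0$ and $\pi_1$, then propagate by Bott periodicity and the $ku$-module structure. The paper's proof does exactly this assembly in its final paragraph. The issue is that you have not actually established the central computational input $\tau_\C(\beta) = 2b$; you only sketch two possible routes and carry out neither.

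The paper's route is entirely different from either of yours and requires real work. It first develops excision properties of $L$-theory for pullback squares of (possibly non-unital) involutive rings, showing that the defect of Mayer--Vietoris is controlled by a Tate spectrum $(HX)^{tC_2}$ built from $K_0$-data. Specializing to the cone/suspension square of a $C^*$-algebra $A$, this yields a fiber sequence $\Sigma L(SA) \to LA \to H(K_0(A))^{tC_2}$, and from the resulting Rothenberg-type analysis a short exact sequence $0 \to L_1(S\C) \to L_2(\C) \to \Z/2 \to 0$. Combining this with the analogous suspension isomorphism in $K$-theory (where it is an honest isomorphism, by excisiveness) and the already-established $\pi_0$- and $\pi_1$-isomorphisms, the paper deduces $\tau_\C(\beta) = 2b$ via a commutative square of suspension maps. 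None of this appears in your proposal; your ``Postnikov truncation'' plan and your ``pair with the skew-Hermitian form $i\bar{x}y$'' remark are at best sketches of the conclusion, not arguments.

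Two further caveats. First, your appeal to Theorem C to constrain the constant $c$ to $\pm 2^n$ risks circularity: in the paper, Theorem C is derived as a corollary of Theorem B together with the $\tau_\C(\beta)=2b$ identity, so you cannot use it as an input here without first establishing it by independent means (which the paper does remark is possible via the stable Yoneda lemma, but you would have to actually do that). Second, the $\pi_1$-isomorphism is not as automatic as you suggest; in the paper it also relies on the excision result, specifically on the fact that $\Sigma L(SA) \to LA$ induces an isomorphism on $\pi_1 \to \pi_1$ (the odd-degree part of the fiber sequence), which is itself a consequence of the Tate computation. So both the $\pi_1$-case and the $\pi_2$-case of your low-degree input rest on machinery you have not developed.
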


As a consequence one can also deduce results about the periodic versions of these functors if one inverts 2. For the real statement we consider $KO$ and $L$ as functors from the $\infty$-category $\N\rc$ associated to the 1-category $\rc$ of real $C^*$-algebras. 
\begin{ThmC}\label{ThmC}
The functors $K\adj, L\adj\colon \N\cc \to \Sp$ are equivalent as lax symmetric monoidal functors. 
Also the two functors $KO\adj, L\adj\colon \N\rc \to \Sp$ are equivalent as lax symmetric monoidal functors.
\end{ThmC}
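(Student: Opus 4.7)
The plan is to deduce the complex statement directly from Theorems A and B and then transfer the result from $\N\cc$ to $\N\rc$ via a $C_2$-descent argument along the complexification functor.

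For the complex case, I apply the smashing lax symmetric monoidal localization $(-)\adj$ to the transformation $\tau\colon k\to\ell$ provided by Theorem A. Theorem B identifies the kernel of $\tau_A$ on $\pi_{2k+i}$ as the $2^k$-torsion subgroup of $\pi_{2k+i}(kA)$ and its cokernel as $\pi_{2k+i}(\ell A)/2^k$, and both vanish upon inverting $2$. Hence $\tau\adj\colon k\adj\to\ell\adj$ is a levelwise equivalence of lax symmetric monoidal functors $\N\cc\to\Sp_{\geq 0}$. The periodic $K\adj$ and $L\adj$ are then obtained from $k\adj$ and $\ell\adj$ by inverting the respective Bott classes, a multiplicative (and hence lax symmetric monoidal) construction. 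Because $\tau$ is multiplicative and induces an isomorphism on $\pi_2$ after inverting $2$, the Bott class on the $k\adj$-side is sent to a unit multiple of the Bott class on the $\ell\adj$-side, so inverting these classes on both sides produces the desired lax symmetric monoidal equivalence $K\adj\xrightarrow{\simeq}L\adj$ of functors $\N\cc\to\Sp$.

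For the real case, I use that every real $C^*$-algebra $A$ admits a complexification $A_\C=A\otimes_\R\C$ carrying a natural $C_2$-action by complex conjugation, with $A\simeq A_\C^{C_2}$. After inverting $2$, both $KO$ and $L$ on real $C^*$-algebras are recovered as $C_2$-homotopy fixed points of the corresponding complex theory applied to the complexification:
\[
KO(A)\adj\simeq K(A_\C)\adj{}^{hC_2}\qquad\text{and}\qquad L(A)\adj\simeq L(A_\C)\adj{}^{hC_2},
\]
where both identifications are natural, lax symmetric monoidal, and compatible with the $C_2$-action. The complex equivalence constructed above is then $C_2$-equivariant by naturality, and passing to $C_2$-homotopy fixed points, which preserves lax symmetric monoidal structures, yields the natural lax symmetric monoidal equivalence $KO\adj\simeq L\adj$ on $\N\rc$.

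The main obstacle I anticipate is the descent statement for $L$-theory of real $C^*$-algebras, namely the identification $L(A)\adj\simeq L(A_\C)\adj{}^{hC_2}$ together with the explicit identification of the $C_2$-action on $L(A_\C)\adj$ in a manner compatible, under the complex equivalence, with complex conjugation on $K(A_\C)\adj$. The $K$-theoretic version of this descent is classical, but its $L$-theoretic counterpart has to be established separately, either by reducing to a descent statement over the base ring $\R$ via a symmetric monoidal base-change argument or by directly analyzing $L$-theory in terms of quadratic and symmetric forms. A secondary technical point is the lax symmetric monoidal enhancement of inverting the Bott element, which is standard but should be verified carefully in the $\infty$-categorical framework used throughout the paper.
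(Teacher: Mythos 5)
Your complex case argument is essentially the paper's. The paper likewise localizes the lax symmetric monoidal $\tau$ from Theorem A, uses Theorem B to identify the kernel and cokernel of $\tau_A$ on homotopy as $2$-power torsion (hence vanishing after inverting $2$), and passes from the connective to the periodic equivalence by inverting Bott classes, noting that $\tau_\C(\beta)=2b$ becomes a unit multiple of $b$ after inverting $2$. Your flagged "secondary technical point" about the multiplicativity of Bott-inversion is fair; the paper is similarly terse there, but this is standard once one knows $\tau$ is multiplicative.

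Your real case, however, takes a genuinely different route and contains a genuine gap. The paper never invokes $C_2$-descent. Instead it constructs a real $\KK^\R_\infty$ category, proves that $L\adj$ factors through it (via an explicit transfer: the composite $LA\adj\to L(A_\C)\adj\to LA\adj$ is multiplication by $2$, hence an equivalence after inverting $2$), shows that $L\adj$ preserves finite limits using the excision results of Section 4.1, and then applies the corepresentability of $KO$ on $\KK^\R_\infty$ together with the Yoneda lemma to produce and characterize the lax symmetric monoidal $\tau\colon KO\to L\adj$. The equivalence is then deduced from the $\pi_0$ isomorphism and excisiveness, exactly parallel to the complex case. Your approach instead hinges on the descent statement $L(A)\adj\simeq L(A_\C)\adj^{hC_2}$, compatibly lax symmetric monoidal with the $K$-theoretic descent, and you explicitly name this as "the main obstacle" without establishing it. That is precisely the content that would have to be supplied: one must construct the $C_2$-action on $L(A_\C)\adj$ as a natural lax symmetric monoidal structure, verify it matches complex conjugation under the complex equivalence, and prove that the homotopy fixed points recover $L(A)\adj$ naturally. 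None of these steps is carried out. It is plausible the descent holds (the paper's retraction argument together with vanishing of the $C_2$-Tate construction after inverting $2$ would be the natural ingredients), but making it precise is at least as much work as the paper's direct Yoneda argument, which sidesteps descent entirely. As written, the real half of your proof is a proof strategy, not a proof.
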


So far we are not able to produce an integral transformation for real $C^*$-algebras as in the complex case (Theorem A). 
But we can still use the equivalence after inverting 2 to relate the $L$-theoretic Farrell-Jones conjecture to the real Baum-Connes conjecture. The following theorem makes this precise.

\begin{ThmD}\label{ThmD}
Let $G$ be a countable discrete group. Then there is a commutative diagram in which the horizontal arrows are the respective assembly maps appearing in  the Baum-Connes and the Farrell-Jones conjecture:
\[\xymatrix@C=1.5cm{ KO^G_*(\underline{E}G)\adj \ar[rr]^-{\BC\adj} \ar[d]_\cong & & KO_*(C^*_r (G;\R))\adj  \ar[d]^-\cong \\ L\R^G_*(\underline{E}G)\adj \ar[r]^-{\FJ\adj} & L_*(\R G)\adj \ar[r] & L_*(C^*_r (G;\R))\adj \\ L^q\Z^G_*(\underline{E}G)\adj \ar[u]^\cong \ar[r]_-{\FJ\adj} & L^q_*(\Z G)\adj \ar[u] &}\]
\end{ThmD}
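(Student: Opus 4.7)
The plan is to assemble the diagram from three ingredients: Theorem C, the naturality of the Davis--L\"uck assembly construction under transformations of coefficient functors, and a standard relative-assembly statement after inverting~$2$.

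I would start with the right column. Applying Theorem C to the real $C^*$-algebra $A = C^*_r(G;\R)$ yields the right vertical isomorphism $KO_*(C^*_r(G;\R))\adj \cong L_*(C^*_r(G;\R))\adj$. Since Theorem C provides an equivalence of lax symmetric monoidal functors $KO\adj \simeq L\adj\colon \N\rc \to \Sp$, and the Davis--L\"uck machine produces assembly maps from such spectrum-valued coefficient functors in a way that is natural in the functor, the equivalence upgrades to a commutative square between the Baum--Connes assembly map $\BC\adj$ and the assembly map for $L\R$-coefficients restricted to $\underline{E}G$. This gives the whole top square together with the upper vertical isomorphism into the middle row.

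Next, I would build the lower half using the ring maps $\Z \to \R \to C^*_r(G;\R)$. The map $L_*(\R G)\adj \to L_*(C^*_r(G;\R))\adj$ is induced by the inclusion $\R G \hookrightarrow C^*_r(G;\R)$ of rings with involution, and the right square of the middle row commutes by naturality of the assembly map under maps of coefficient rings. The leftmost vertical isomorphism $L^q\Z^G_*(\uuline{E}G)\adj \cong L\R^G_*(\underline{E}G)\adj$ is constructed as a composite of (i) base change along $\Z \to \R$, using that over the semisimple ring $\R$ the various decorations agree after inverting~$2$, together with (ii) the fact that the relative assembly from the finite-subgroup family to the virtually-cyclic family is an equivalence for real $L$-theory after inverting~$2$, a known consequence of the computation of $L(\R[V])$ for virtually cyclic $V$. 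Commutativity of the remaining square is naturality of assembly in coefficient rings with involution.

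The main obstacle I expect is in the first step: one must verify that the lax symmetric monoidal equivalence of Theorem C, which lives in $\Fun(\N\rc,\Sp)$, interacts correctly with the Davis--L\"uck setup. Concretely, the coefficient functor must be extended equivariantly, e.g.\ to an $\mathrm{Or}(G)$-spectrum built out of crossed products or groupoid $C^*$-algebras, and one has to check that the Theorem~C equivalence descends to an equivalence of $\mathrm{Or}(G)$-spectra rather than merely an equivalence of underlying spectra. This is where the lax symmetric monoidal refinement in Theorem~C is essential, since $\Or(G)$-spectrum constructions like $C^*_r(G;-)$ rely on module structures over group $C^*$-algebras. Once this naturality upgrade is in place, the rest of the diagram is formal, combined with the cited input on relative assembly after inverting~$2$.
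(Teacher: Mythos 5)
Your overall strategy matches the paper's: use Theorem C to get a natural equivalence of spectrum-valued functors, upgrade it to an equivalence at the level of coefficient systems for the Davis--L\"uck machine, and then deal with the lower rows by naturality of assembly in the coefficient ring plus the family change (virtually cyclic to finite) after inverting $2$ via the cited result of L\"uck--Reich. Two points differ from the paper's route and are worth flagging.

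First, on the mechanism of the equivariant upgrade: the paper does not use the lax symmetric monoidal structure from Theorem C for this step. Instead it uses the functor $\Gpd_2^\omega \to \KK^\R_\infty$ sending a groupoid to its real full groupoid $C^*$-algebra (from the companion paper \cite{LNS}), composes with the two functors $KO\adj, L\adj\colon \KK^\R_\infty \to \Sp$, and then strictifies, via \cite[Proposition 4.2.4.4]{LurieHTT}, to $1$-categorical functors $\Gpd^\omega \to \Sp_1$ related by a zig-zag of natural weak equivalences. Thus what is needed at this stage is ordinary naturality of the equivalence from Theorem C, not its monoidal refinement. Your instinct that something additional is needed to get an $\Or(G)$-spectrum comparison is correct, but attributing it to module structures over group $C^*$-algebras is not how the paper proceeds, and without the groupoid functor input your sketch leaves this step genuinely unfinished.

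Second, and this is a real omission: the groupoid functor lands in \emph{maximal} group $C^*$-algebras, so the comparison of assembly maps one obtains directly involves $C^*(G;\R)$, not $C^*_r(G;\R)$. The paper then inserts a further commuting square comparing the maximal and reduced versions (using that the Baum--Connes assembly map factors through the full group $C^*$-algebra) before pasting everything into the stated diagram. Your sketch jumps straight to the reduced $C^*$-algebra and would need to justify that the square with $C^*(G;\R) \to C^*_r(G;\R)$ commutes compatibly with the $\tau$-isomorphisms. With those two points filled in, the rest of your argument, including the treatment of the change of rings $\Z \to \R$, the change of decorations after inverting $2$, and the family change, is in line with the paper.
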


We end with a theorem which implies that the integral map, which we construct in Theorem A, is the only non-trivial integral map between the variants of the functors $K$ and $L$ that is possible. Indeed we prove a stronger statement about the spectra $L\C$ and $KU = K\C$.

\begin{ThmE}\label{ThmE}
We have that
\[ [L\C,KU] = [KU,L\C]  = [\ell\C,ku] = 0 \]
where $[-,-]$ denotes the groups of morphisms in the homotopy category of spectra.
\end{ThmE}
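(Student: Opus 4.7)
The plan is to reduce the three vanishing statements to the single calculation $[\ell\C,\ku]=0$ and then perform that computation using the cofiber of $\tau$.

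For the reductions, the key observation is that $\ell\C$ is connective while the cofiber of $\ku=\tau_{\geq 0}\KU\to\KU$ is coconnective, so any map $\ell\C\to\KU$ factors through $\ku$ and $[\ell\C,\KU]=[\ell\C,\ku]$. Writing $L\C\simeq\hocolim_k\Sigma^{-2k}\ell\C$ as a Bott-inverting colimit and using $\Sigma^{2k}\KU\simeq\KU$, the Milnor $\lim/\lim^1$ sequence exhibits $[L\C,\KU]$ in terms of copies of $[\ell\C,\KU]=[\ell\C,\ku]$, so $[\ell\C,\ku]=0$ yields $[L\C,\KU]=0$ (after a short analysis of the $\lim^1$ term, which uses that $[\ell\C,\Sigma\ku]=[\ell\C,\tau_{\geq 0}\Sigma\KU]$ vanishes by an Atiyah--Hirzebruch argument on the odd-parity $H^*(\ell\C;\Z)$). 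Dually, from $\KU\simeq\hocolim_k\Sigma^{-2k}\ku$ one obtains $[\KU,L\C]$ as a Milnor tower of copies of $[\ku,L\C]$ whose transition maps, under the $L\C$-Bott identification and using the key relation $\tau(\beta)=2v$ from Theorem B, are multiplication by $2$. The inverse system $\Z\xleftarrow{\cdot 2}\Z\xleftarrow{\cdot 2}\cdots$ has vanishing $\lim$ and vanishing $\lim^1$ (Mittag--Leffler, since the transitions are injective), giving $[\KU,L\C]=0$.

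For the core calculation $[\ell\C,\ku]=0$, I would use the cofiber sequence
\[
\ku \xrightarrow{\tau} \ell\C \to C,
\]
where by Theorem B the cofiber $C$ satisfies $\pi_{2k}(C)=\Z/2^k$ for $k\geq 1$ and is zero in other degrees, so it is a $2$-primary torsion spectrum whose $2$-adic divisibility grows without bound. The resulting long exact sequence
\[
[C,\ku] \to [\ell\C,\ku] \to [\ku,\ku] \xrightarrow{\partial} [\Sigma^{-1}C,\ku]
\]
reduces the claim to showing $[C,\ku]=0$ and that $\partial$ is injective on the image of $[\ell\C,\ku]\to[\ku,\ku]$. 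Both assertions should follow from a Postnikov or Atiyah--Hirzebruch analysis playing off the unbounded $2$-adic torsion of $\pi_*(C)$ against the torsion-free integer structure of $\pi_*(\ku)$ together with the Bott-periodic structure of $[\ku,\ku]$ (stable cohomology operations, Adams operations).

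The main obstacle is precisely this final computation. A map from a $2$-torsion spectrum to $\ku$ is automatically zero on every homotopy group, but may still be non-null because of extension data coming from non-trivial $k$-invariants of $C$. Ensuring that such obstructions cannot accumulate coherently---with the unbounded $2$-adic valuations in $\pi_*(C)$ forcing any putative map to exhibit infinite $2$-divisibility incompatible with the torsion-free structure of $\pi_*(\ku)$---requires a careful spectral-sequence argument tracking how these $2$-adic divisibilities interact with the stable operations on $\ku$, and is the most delicate part of the proof.
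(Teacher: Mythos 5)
Your proposal takes an entirely different route from the paper's, and unfortunately it is not a proof: you yourself acknowledge the central gap when you write that the key vanishing $[C,\ku]=0$ "requires a careful spectral-sequence argument\dots and is the most delicate part of the proof" without supplying it. A map from a $2$-torsion spectrum with unbounded $2$-adic divisibility into $\ku$ can a priori be non-null (it will be zero on homotopy groups but extension data and $k$-invariants can produce nonzero maps, as you correctly observe), so the claim genuinely requires proof; sketching the shape of an obstruction-theoretic argument does not establish it.

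Several of the reduction steps also have unjustified or incorrect claims. For $[KU,L\C]$ you write $KU\simeq\hocolim_k\Sigma^{-2k}\ku$ and assert that the transition maps in the resulting Milnor tower of $[\ku,L\C]$'s are "multiplication by $2$" using $\tau(\beta)=2v$. But the transitions are precomposition with the Bott map $\beta$ on $\ku$, and only for $\ku$-\emph{module} maps $\ku\to L\C$ (with $L\C$ a $\ku$-module via $\tau$) would this translate to multiplication by $\tau(\beta)=2v$; for arbitrary spectrum maps there is no reason for $f\circ\beta$ to equal $2v\cdot f$. For $[L\C,KU]$ you dispose of the $\lim^1$ term by claiming $[\ell\C,\Sigma\ku]$ vanishes "by an Atiyah--Hirzebruch argument on the odd-parity $H^*(\ell\C;\Z)$"; but $H^*(\ell\C;\Z)$ is not odd-parity — indeed $\ell\C_{(2)}$ is a wedge of \emph{even} suspensions of $\HZ_{(2)}$, so its integral cohomology is concentrated in even degrees plus higher torsion, not odd degrees.

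For comparison, the paper's argument is quite short and avoids all of this. The key input is that $L\C_{(2)}$ is an $\HZ$-module (via the Sullivan--Ranicki orientation $\MSO\to L\Z$ and the fact that $\MSO_{(2)}$ is an $\HZ$-module) and that $\HZ\otimes KU$ is rational. Smashing the arithmetic fracture square for the sphere at $p=2$ with $L\C\otimes KU$ then shows that $L\C\otimes KU\to(L\C\otimes KU)\adj$ is already an equivalence, so $\Map(L\C,KU)\simeq\Map(L\C\adj,KU)$ and likewise for the other two cases. Anderson self-duality of $KU$ (resp.\ $L\C$) then gives a universal-coefficient short exact sequence in which the $\Ext$ term vanishes by evenness of $KU\otimes L\C$ (proved via Landweber exactness and the Thom isomorphism) and the $\Hom$ term vanishes because $KU_0(L\C\adj)$ is a $\Z\adj$-module and $\Hom(\Z\adj\text{-module},\Z)=0$. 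If you want to pursue a cofiber-based argument instead, you would need to actually carry out the Postnikov-tower obstruction analysis you allude to — at present the proposal is a plan, not a proof.
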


\subsection{Organisation of the paper}
In \cref{prelims}
we deal with the necessary preliminaries. We first recall $K$-theory of $C^*$-algebras in \cref{K-theory} and then move on to $L$-theory of general rings in \cref{L-theory} and $L$-theory of $C^*$-algebras in \cref{L-theory of CAlg}.
 \cref{Construction of the map} is devoted to the proof of Theorem A.  In \cref{KK infty category} we introduce a stable $\infty$-category $\KK_\infty$ which is a key player for all constructions. We prove the first part of Theorem A in \cref{construction} and continue to discuss  multiplicative properties in \cref{Multiplicative properties} to finish the proof of Theorem A. \cref{effect on homotopy} is devoted to a proof of Theorem B. As preliminaries we need to discuss excision properties of $L$-theory, which we deal with in \cref{excision}. In \cref{Applications} we first prove Theorem C in \cref{real} and use this to establish Theorem D in \cref{assembly}. Finally in \cref{integral maps} we prove Theorem E and in \cref{non unital L-theory} we extend the lax symmetric monoidal structure of $L$-theory to non-unital $C^*$-algebras.

\end{section}
\begin{ackn}
We would like to thank Wolfang L\"uck for sharing with us his ideas and insights about $K$- and $L$-theory. We are pleased to thank Christian Wimmer for several fruitful discussions. The first author would like to thank the GRK 1150 - Cohomology and Homotopy for the support during his time as Phd student at the university of Bonn. Furthermore he was supported by Wolfgang L\"ucks ERC Advanced Grant ``KL2MG-interactions'' (no.662400) granted by the European Research Council.
\end{ackn}

\section{Preliminaries about $K$- and $L$-theory}\label{prelims}

In this section we first develop all necessary tools to construct the transformation between $K$-theory and $L$-theory. 
We start by recalling basic definitions and properties of $K$-theory and $L$-theory. 

%%%%%%%%%%%%%%%%%%%%%%%%%%%%%%%%%%%%%%%%%%%%%%%%%%%%%%%%%%%%%%

\subsection{$C^*$-algebras and $K$-theory}\label{K-theory}
Throughout this paper we work in the category of complex, \emph{separable}\footnote{This is needed in order to turn $C^*$-algebras with the Kasparov product into a category.}, not necessarily unital $C^*$-algebras with not necessarily unital $^*$-homomorphisms as morphisms. We denote this category by $\cc$.

To be more specific we recall that a $C^*$-algebra is a complex Banach algebra $A$ equipped with a complex antilinear involution $x \mapsto x^*$ that satisfies the $C^*$-identity
\[ \| x^*x \| = \|x\|^2 \text{ for all } x \in A.\]
Obviously there is a forgetful functor
\[\xymatrix{\ccu \ar[r]^-U & \ringsinv }\]
from the category of unital $C^*$-algebras and unital morphisms to the category of involutive rings, by forgetting the topology on $A$. It is well known that this functor is fully faithful when viewed as a functor to involutive $\C$-algebras, see \cite[Chapter I, section 5]{Takesaki}, and that one can reconstruct the norm on $A$ from the involutive ring $UA$, \cite[Chapter I, Prop. 4.2]{Takesaki}.

Examples of $C^*$-algebras are continuous functions (vanishing at infinity) of a (locally) compact Hausdorff space $X$, denoted by $C(X)$. Indeed, the theorem of Gelfand and Naimark states that any commutative $C^*$-algebra is of this kind, see \cite[Theorem 4.4.]{Takesaki}. Further examples are $\b(\h)$, the bounded operators on a Hilbert space, and thus also any norm-closed sub-$*$-algebra of $\b(\h)$. It is a theorem of Gelfand, Naimark, and Segal that any $C^*$-algebra admits a faithful representation on a Hilbert space and is thus a norm-closed subalgebra of $\b(\h)$ for some Hilbert space $\h$, see \cite[Theorem 9.18]{Takesaki}.

\begin{Lemma}\label{lemma unitalization}
The inclusion functor $\ccu \to \cc$ admits a left adjoint, called the \emph{unitalization}
\[ \xymatrix@R=.3cm{\cc \ar[r] & \ccu \\ A \ar@{|->}[r] & A^+ }\]
which comes with a natural split short exact sequence
\[\xymatrix{0 \ar[r] & A \ar[r] & A^+ \ar[r]^-{\pi_A} & \C \ar[r] & 0}.\]
If $A$ is unital, then $A^+ \cong A\times \C$.
\end{Lemma}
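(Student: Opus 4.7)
The plan is to construct $A^+$ explicitly as the vector space $A \oplus \C$ with the algebra and involution structure given by
\[(a,\lambda)(b,\mu) = (ab + \lambda b + \mu a, \lambda\mu), \qquad (a,\lambda)^* = (a^*, \bar\lambda),\]
and unit $(0,1)$. The inclusion $A \hookrightarrow A^+$ sending $a \mapsto (a,0)$, the projection $\pi_A\colon A^+ \to \C$ sending $(a,\lambda) \mapsto \lambda$, and the splitting $\C \to A^+$ sending $\lambda \mapsto (0,\lambda)$ then formally yield the split short exact sequence. Given any unital $C^*$-algebra $B$ and a (not necessarily unital) $*$-homomorphism $\varphi\colon A \to B$, the assignment $(a,\lambda) \mapsto \varphi(a) + \lambda \cdot 1_B$ is the unique unital $*$-homomorphism $A^+ \to B$ restricting to $\varphi$ on $A$, which establishes the adjunction once $A^+$ is known to be a $C^*$-algebra.

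The main obstacle, and really the only non-formal content, is exhibiting a $C^*$-norm on $A^+$. I would split into two cases. If $A$ is not unital, I would let $A^+$ act on $A$ by the left regular representation $(a,\lambda)\cdot b := ab + \lambda b$ and define $\|(a,\lambda)\|$ to be the operator norm of this action on the Banach space $A$. The key step is to check faithfulness: if $(a,\lambda)\cdot b = 0$ for all $b$, then either $\lambda = 0$ and $ab = 0$ for all $b$ (so $a=0$ by taking $b = a^*$ and using the $C^*$-identity), or $\lambda \neq 0$ and $-a/\lambda$ is a left unit for $A$, which combined with its adjoint produces an actual unit, contradicting the assumption. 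Submultiplicativity and the $C^*$-identity on $A^+$ then follow from the corresponding properties of $\b(A)$ together with the observation that the supremum defining the operator norm is attained on the unit ball of $A$; here one uses that for any $x \in A$ one has $\|x^*x\| = \|x\|^2$, which upgrades the $C^*$-inequality on $A^+$ to an equality. If $A$ is unital, this representation is no longer faithful (the element $(1_A, -1)$ acts as zero), so instead I would use the algebraic isomorphism $A^+ \xrightarrow{\cong} A \times \C$, $(a,\lambda) \mapsto (a + \lambda 1_A, \lambda)$, to transport the canonical $C^*$-norm from the product, which simultaneously proves the final clause of the lemma.

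Finally I would verify that this assignment is functorial: a $*$-homomorphism $f\colon A \to B$ in $\cc$ induces $f^+\colon A^+ \to B^+$ by $(a,\lambda) \mapsto (f(a), \lambda)$, which is contractive for the norms just defined (by factoring the left regular representation) and hence a morphism in $\ccu$. Naturality of the short exact sequence is then immediate from the formulas, and the unit/counit identities for the adjunction $(-)^+ \dashv U$ follow from the uniqueness clause of the universal property established above.
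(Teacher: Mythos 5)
Your proof is correct and follows essentially the same construction as the paper: both realize $A^+$ as $A\oplus\C$ with the twisted product and obtain the $C^*$-norm from the left regular representation on the Banach space $A$, handling the unital case separately via $A\times\C$. The one place to be a bit careful in the write-up is the phrase "the $C^*$-identity follows from the corresponding properties of $\b(A)$": since $\b(A)$ here is only a Banach algebra and not a $C^*$-algebra, the $C^*$-identity on $A^+$ is not inherited from $\b(A)$, but your parenthetical remark (using $\|x^*x\|=\|x\|^2$ for $x\in A$, i.e.\ that $A$ is an essential ideal with a $C^*$-norm, to upgrade the inequality to an equality) is exactly the correct ingredient, so the argument goes through.
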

\begin{proof}
First one observes that if $A$ is unital then the algebra $A\times \C$ is unital as well and satisfies the universal property needed. Thus the furthermore part is easy. 
If $A$ does not have a unit one considers the embedding
\[ A \subseteq \b(A) \]
by left-multiplication. It is injective, hence isometric and the image does not contain the unit (by the assumption that $A$ does not have a unit element). The smallest subalgebra containing both $A$ and the identity of $\b(A)$ is the $C^*$-algebra $A^+$. Notice that as a $\C$-vector space $A^+$ is of the form $A\oplus \C$ but the multiplication is twisted:
\[ (a,\lambda)\cdot(b,\mu) = (ab+\lambda b+\mu a, \lambda\mu)\]
from which it follows that $A$ is an ideal in $A^+$ whose quotient is $\C$.
See also \cite[Proposition 1.5]{Takesaki}.
\end{proof}
We want to remark that this is the minimal way to embedd $A$ as an ideal in a unital $C^*$-algebra. In general there is a whole family of unitalizations one can consider. If $A$ is commutative, and thus of the form $C_0(X)$ for a locally compact Hausdorff space $X$ these unitalizations correspond precisely to compactifications of $X$. For example, the unitalization we discussed above corresponds to the one-point compactification.

Another important technical tool is the tensor product of $C^*$-algebras. Let $A$ and $B$ be $C^*$-algebras. Then we denote the maximal tensor product by $A\otimes B$. It is a $C^*$-completion of the algebraic tensor product $A\otimes_\C B$. In general there are also other completions but we will only need the maximal one in this article.

\begin{Prop}\label{prop tensor exact}
The maximal tensor product is exact, i.e. given a short exact sequence 
\[ \xymatrix{0 \ar[r] & J \ar[r] & A \ar[r] & B \ar[r] & 0}\]
and any other $C^*$-algebra $D$, then the induced sequence
\[ \xymatrix{0 \ar[r] & J\otimes D \ar[r] & A\otimes D \ar[r] & B\otimes D \ar[r] & 0 }\]
is exact as well. 
\end{Prop}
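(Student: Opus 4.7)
The plan is to verify exactness position by position, and throughout I will invoke the universal property of the maximal tensor product: a $*$-homomorphism $E \otimes F \to C$ corresponds to a pair of $*$-homomorphisms $E, F \to C$ with commuting images.

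Surjectivity of $A \otimes D \to B \otimes D$ is immediate because its image contains the dense subalgebra $B \odot D$ and images of $*$-homomorphisms between $C^*$-algebras are closed. Let $I \subseteq A \otimes D$ denote the closed ideal generated by the algebraic tensors $j \otimes d$ with $j \in J, d \in D$. The composite $A \otimes D \to B \otimes D$ factors through $(A \otimes D)/I$, while conversely $(A \otimes D)/I$ receives commuting $*$-homomorphisms from $B$ (lifted through $A$, well-defined modulo $I$) and from $D$, yielding an inverse $B \otimes D \to (A \otimes D)/I$. Hence the kernel of $A \otimes D \to B \otimes D$ is exactly $I$.

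It remains to identify $I$ with the image of the canonical map $\iota \colon J \otimes D \to A \otimes D$ and to show that $\iota$ is injective. Once injectivity is established, the image is a closed ideal of $A \otimes D$ containing every $j \otimes d$, so it coincides with $I$. Injectivity amounts to the claim that every commuting pair of $*$-representations $\pi_J \colon J \to \b(H)$ and $\pi_D \colon D \to \b(H)$ extends to a commuting pair of $*$-representations of $A$ and $D$, which would bound the max norm on $J \odot D$ above by the restriction of the max norm from $A \odot D$. Restricting to the essential subspace of $\pi_J$, which is $\pi_D$-invariant because $\pi_D$ commutes with $\pi_J(J)$, I may assume $\pi_J$ is non-degenerate. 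Then $\pi_J$ extends uniquely to the multiplier algebra $M(J)$, and composing with the canonical map $A \to M(J)$ produced by the ideal inclusion $J \subseteq A$ yields an extension $\pi_A \colon A \to \b(H)$. For $a \in A, d \in D, j \in J, \eta \in H$, both $\pi_A(a)\pi_D(d)\pi_J(j)\eta$ and $\pi_D(d)\pi_A(a)\pi_J(j)\eta$ equal $\pi_J(aj)\pi_D(d)\eta$, using $aj \in J$ and the commutation of $\pi_J(J)$ with $\pi_D(D)$, so by non-degeneracy $\pi_A$ commutes with $\pi_D$.

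The main obstacle is this last step: extending representations across the possibly non-unital ideal $J$. This is handled by the multiplier algebra formalism, which in turn relies on approximate units in $J$, but it is the only non-formal input. Since the statement is classical, an alternative is simply to cite a standard reference such as Takesaki.
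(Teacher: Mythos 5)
Your argument is correct in its essentials and follows the standard textbook proof of exactness of the maximal tensor product, whereas the paper simply cites \cite[II.9.6.6]{Blackadar2} (not Takesaki, but the same spirit). So the comparison is between a reference and a self-contained argument, and it is worth saying what the argument actually requires.

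The decomposition into three checks (surjectivity at $B\otimes D$, identification of the kernel with the ideal $I$ generated by the elementary tensors from $J$, and injectivity of $J\otimes D \to A\otimes D$) is the right one, and the hardest point is indeed the last. Your handling of it via restriction to the essential subspace of $\pi_J$, extension to $M(J)$, and the commutation check on vectors $\pi_J(j)\eta$ is correct; the one thing worth spelling out there is why one loses no norm by restricting to the essential subspace, namely that $\pi_J(j)\pi_D(d)$ already vanishes on the orthogonal complement because $\pi_J$ does.

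The one place where you are glossing is the second step. You write that $(A\otimes D)/I$ ``receives commuting $*$-homomorphisms from $B$ and from $D$,'' but in the non-unital setting there are no canonical $*$-homomorphisms $A\to A\otimes D$ or $D\to A\otimes D$; the universal property of the maximal tensor product must be phrased through multiplier algebras (commuting non-degenerate $*$-homomorphisms $A\to M(A\otimes D)$, $D\to M(A\otimes D)$), and the fact that the induced $A\to M((A\otimes D)/I)$ kills $J$ requires a non-degeneracy-of-$\rho_D$ density argument rather than being automatic from $I$ containing the elementary tensors. You flag the multiplier-algebra machinery as the only non-formal input, but attribute it exclusively to the third step, whereas it is already doing work in the second. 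This is an expository gap, not a wrong approach, and it disappears if (as you also suggest) one simply cites the literature as the paper does.
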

\begin{proof}
This is the content of \cite[II.9.6.6]{Blackadar2}.
\end{proof}
\begin{Rmk}
There is a notion of fibrations between $C^*$-algebras called \emph{Schochet fibrations}, cf. \cite[Definition 2.14]{Uuye}. With this notion \cref{prop tensor exact} has the addendum that for a Schochet fibration $A \to B$ and any $C^*$-algebra $D$, the induced map $A \otimes D \to B \otimes D$ is also a Schochet fibration, see \cite[Lemma 2.17]{Uuye}.
\end{Rmk}

Classical and powerful invariants of a $C^*$-algebra $A$ are the topological $K$-theory groups $K_*(A)$. For a compact Hausdorff space $X$ the groups $K_*(C(X))$ coincide with the usual topological $K$-theory group $K^{-*}(X)$ defined via vector bundles over $X$. For all $C^*$-algebras, topological $K$-theory is $2$-periodic, thanks to \emph{Bott periodicity}. See \cite{Rordam}, \cite{Blackadar}, \cite{Wegge-Olsen} for a treatment of the basics of $K$-theory for operator algebras.  

The topological $K$-groups of a $C^*$-algebra can be obtained as the homotopy groups of a $K$-theory spectrum $KA$, see for instance \cite{Meyer2} or \cite{Joachim}. More precisely, the group valued $K$-functor factors through (any of) the $1$-categories of spectra like symmetric or orthogonal spectra. An important feature of topological $K$-theory is that it is \emph{excisive}, in other words if
\[ \xymatrix{0 \ar[r] & J \ar[r] & A \ar[r] & B \ar[r] & 0}\]
is a short exact sequence of $C^*$-algebras, then the sequence
\[ \xymatrix{KJ \ar[r] & KA \ar[r] & KB}\]
is a fiber sequence of spectra. The long exact sequence it induces in homotopy groups is the usual long exact sequence of topological $K$-groups associated to the short exact sequence as above. 

The spectrum valued topological $K$-theory functor
admits the structure of a lax symmetric monoidal functor, i.e. for $A,B \in \cc$ there is a map of spectra
\[ KA \otimes KB \to K(A\otimes B) \]
satisfying the usual axioms of lax symmetric monoidal functors. Here we denote the smash product of spectra by $\otimes$. In \cite{Joachim} Joachim gave a point-set level description of this structure in orthogonal spectra.

\begin{Rmk}
From Joachim's point-set model, it follows that the induced functor
\[ \N\cc \to \Sp \]
of $\infty$-categories also admits a lax symmetric monoidal refinement which is all we will actually use in this article. Recall that $\Sp$ denotes the $\infty$-category of spectra. For the notion of lax symmetric monoidal functors between $\infty$-categories, see \cref{Multiplicative properties} and \cite[chapter 2]{LurieHA}. We will give an independent (equivalent) construction of the $K$-theory functor in \cref{K-theory-spectra}. The fact that $K$-theory admits a lax symmetric monoidal refinement is then a formal consequence of \cite[Corollary 6.8]{Nikolaus}.
\end{Rmk}

Kasparov's $\KK$-groups are of particular importance in the theory of operator algebras and their $K$-theory and the relation to index theory. $\KK$-theory and its applications to index theory (e.g. the Novikov conjecture) have been studied in \cite{KasparovI}, \cite{KasparovII}, \cite{KasparovIII}, and \cite{KasparovIV} and play a prominent role in the analytical aspects of the Baum-Connes conjecture.
The most important feature for us is the following theorem due to Kasparov. 
\begin{Thm}\label{KK-category}
There is a category $\KK$ with the following properties, see e.g.\ \cite{Blackadar} and \cite[Thm. 2.29 \& Rmk. 2.30]{Uuye}:
\begin{enumerate}
\item[(1)] $\Ob(\KK) = \Ob(\cc)$ and there is a functor $\cc \to \KK$, which we denote by $f \mapsto [f]$ on morphisms,
\item[(2)] This functor is a homotopy functor, i.e. if $f$ and $g$ are homotopic, then $[f] = [g]$.
\item[(3)] The category $\KK$ is triangulated and symmetric monoidal via the maximal tensor product. Exact sequences are short exact sequences of $C^*$-algebras in which the epimorphism is a Schochet fibration. The loop functor is the $C^*$-algebraic suspension functor.
\item[(4)] The abelian group valued $K$-functor $K \colon \cc \to \Ab$ factors through the functor $\cc \to \KK$ and the induced functor $\KK \to \Ab$ is corepresentable by the tensor unit object $\C$, i.e. there is an isomorphism of functors $K(-) \cong \Hom_\KK(\C,-)$.
\item[(5)] The abelian groups $\KK(A,B)$ can be described as equivalence classes of triples $(\mathcal{E},\pi,F)$, where $\mathcal{E}$ is a Hilbert-$B$-module, $\pi\colon A \to \mathcal{L}(\mathcal{E})$ is a representation and $F \in \mathcal{L}(\mathcal{E})$ satisfying certain compactness conditions.
\end{enumerate}
\end{Thm}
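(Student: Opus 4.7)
Since the statement is a recapitulation of classical results of Kasparov, the plan is to follow his original construction in \cite{KasparovI, KasparovII}, outlining how each of the five assertions is verified. First I would define the morphism set $\KK(A,B)$ to be the quotient of the abelian monoid of Kasparov $(A,B)$-bimodules — triples $(\mathcal{E},\pi,F)$ as in assertion (5), with $\mathcal{E}$ a countably generated $\Z/2$-graded Hilbert $B$-module, $\pi$ a graded $^*$-representation, and $F$ odd and self-adjoint up to compacts — by the equivalence relation generated by unitary equivalence, addition of degenerate cycles and operator homotopy. Direct sum of cycles makes this into an abelian group. At this stage parts (1), (2) and (5) are essentially built into the definition: a $^*$-homomorphism $f\colon A \to B$ yields the class of $(B,f,0)$ with $B$ viewed as a trivially graded Hilbert $B$-module, and a homotopy of $^*$-homomorphisms through $\cc$ produces a Kasparov $(A,C([0,1],B))$-bimodule whose evaluations at $0$ and $1$ recover the two endpoint classes.

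The main obstacle is the construction of the composition, namely Kasparov's internal product
\[ \KK(A,B) \times \KK(B,C) \longrightarrow \KK(A,C). \]
Given cycles $(\mathcal{E}_1,\pi_1,F_1)$ and $(\mathcal{E}_2,\pi_2,F_2)$, one forms the interior tensor product $\mathcal{E}_1 \otimes_B \mathcal{E}_2$ equipped with the representation $\pi_1 \otimes 1$, and must produce an operator $F$ on it which is a suitable connection between $F_1 \otimes 1$ and $1 \otimes F_2$. The existence and uniqueness of such an $F$ up to operator homotopy — from which associativity of the composition law follows automatically — is the content of Kasparov's technical theorem on separation of commuting subalgebras in the Calkin algebra; alternatively the Connes–Skandalis picture characterises the product in terms of connections on Hilbert modules. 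Granted this, one obtains the category $\KK$ together with the functor $\cc \to \KK$ of (1) and its homotopy invariance (2).

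The remaining assertions flow from the now-established product. For (4), a Kasparov $(\C,A)$-bimodule is precisely a graded Fredholm module over $A$ whose analytical index gives a class in $K(A)$, and one verifies that the resulting natural transformation $\KK(\C,-) \to K(-)$ is an isomorphism by checking it on $\C$ and invoking that both sides are homotopy invariant, stable and half-exact functors on $\cc$. For (3) I take the loop functor to be the $C^*$-algebraic suspension $A \mapsto C_0(\R)\otimes A$ and verify Bott periodicity $\KK(A,C_0(\R)^{\otimes 2}\otimes B) \cong \KK(A,B)$ by exhibiting the inverse equivalence via the internal product with the Bott element $\beta \in \KK(\C, C_0(\R)^{\otimes 2})$ and its dual Dirac partner. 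Distinguished triangles are declared to be those isomorphic to mapping cone sequences; one then shows that every Schochet-fibrant extension $0\to J \to A \to B \to 0$ is $\KK$-equivalent to its mapping cone extension and therefore gives a distinguished triangle, yielding the usual six-term exact sequence. Finally, the symmetric monoidal structure on $\KK$ is induced by the external product $\KK(A_1,B_1) \otimes \KK(A_2,B_2) \to \KK(A_1\otimes A_2, B_1\otimes B_2)$, itself a formal consequence of the internal product, and compatibility with composition is again essentially the associativity of the Kasparov product. As one would expect, the delicate step throughout is the construction and associativity of the internal product; all other items reduce to bookkeeping once that is in hand.
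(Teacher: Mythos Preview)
The paper does not give its own proof of this theorem: it is stated as a summary of classical results due to Kasparov, with citations to \cite{Blackadar} and \cite[Thm.~2.29 \& Rmk.~2.30]{Uuye}, and no proof environment follows. Your outline is a faithful sketch of the standard construction found in those references---defining $\KK(A,B)$ via Kasparov bimodules, building composition via the internal product (with Kasparov's technical theorem or the Connes--Skandalis connection picture as the hard input), and deducing (3) and (4) from the product---so there is nothing substantive to compare.
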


The property of $K$-theory described in $(4)$ is very useful to understand natural transformations $\tau \colon K \to F$ for some functor $F\colon \KK \to \Ab$ by virtue of the Yoneda lemma. It will be the main objective of \cref{KK infty category} to obtain a similar property for the spectrum valued $K$-functor.

\begin{Def}
A morphism $f\colon A \to B$ in $\cc$ is called a $\KK$-equivalence if its image in $\Hom_\KK(A,B)$ is an isomorphism.
\end{Def}
We want to conclude by stating a universal property of $\KK$.
\begin{Thm}\label{Uuye}
The functor $\cc \to \KK$ is a localization along the $\KK$-equivalences. In other words, a functor $F\colon \cc \to \Ab$ factors (necessarily uniquely) through $\KK$ if and only if $F$ has the property of sending $\KK$-equivalences to isomorphisms. These functors are characterized by the property of being split exact and stable.
\end{Thm}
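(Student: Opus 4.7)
My plan is to reduce the theorem to the universal characterization of $\KK$ due to Higson and Cuntz. First I observe that the first two assertions are tautologically equivalent: saying that $\cc \to \KK$ is a localization at the $\KK$-equivalences is exactly saying that a functor $F\colon \cc \to \Ab$ factors uniquely through $\KK$ if and only if it inverts the $\KK$-equivalences. The substantive content is therefore the additional characterization that such functors are precisely the split exact and $C^*$-stable ones.

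One direction of this characterization is elementary and can be handled directly. Since $\KK$ is triangulated by \cref{KK-category}(3), every split short exact sequence $0 \to A \to B \to C \to 0$ in $\cc$ becomes a split distinguished triangle in $\KK$, giving a direct sum decomposition $B \simeq A \oplus C$ in $\KK$; any additive functor out of $\KK$ therefore takes split short exact sequences to split short exact sequences of abelian groups. For stability, the inclusion $\C \hookrightarrow \calK$ is a $\KK$-equivalence (Morita invariance of $K$-theory lifting to $\KK$), so tensoring with $\calK$ becomes invertible in $\KK$, and any functor out of $\KK$ is automatically $C^*$-stable.

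The harder implication is that every split exact, $C^*$-stable, homotopy invariant functor $F\colon \cc \to \Ab$ factors through $\KK$; this is the heart of the Higson--Cuntz theorem. I would proceed using Cuntz's picture, which describes $\KK(A,B)$ as homotopy classes of $\ast$-homomorphisms $qA \to \calK \otimes B$, where $qA$ is the kernel of the fold map from the free product $A * A$ onto $A$. Given such an $F$, split exactness applied to the universal split extension defining $qA$ produces a natural map on morphism sets, stability identifies $F(B)$ with $F(\calK \otimes B)$, and homotopy invariance allows one to descend to $\KK$-classes. Uniqueness of the resulting factorization is immediate, since $\cc \to \KK$ is the identity on objects.

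The main obstacle is verifying that this assignment is actually functorial, i.e.\ compatible with the Kasparov product. This reduces to a computation with free products of $C^*$-algebras and the formula for the Kasparov product in the Cuntz picture as a composition of quasi-homomorphisms, and rather than reproducing the construction in detail I would rely on the packaged treatment in \cite{Uuye}. Once functoriality is in place, the localization statement is a formal consequence: any functor $F$ inverting $\KK$-equivalences is split exact and stable (since $\cc \to \KK$ is), and therefore lifts uniquely along $\cc \to \KK$.
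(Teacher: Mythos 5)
Your plan is reasonable in spirit, but there is a genuine gap that the paper's own (very short, citation-based) proof explicitly flags. In the ``harder implication'' you quietly upgrade the hypothesis to ``split exact, $C^*$-stable, \emph{homotopy invariant}'' before invoking the Cuntz $qA$-picture and the Higson universal property. The theorem as stated, however, characterizes the functors factoring through $\KK$ by split exactness and stability \emph{alone}: homotopy invariance is not part of the hypothesis, and you never derive it. Without that, your reduction to Cuntz/Higson only proves the weaker statement ``split exact $+$ stable $+$ homotopy invariant $\Rightarrow$ factors through $\KK$.'' The missing ingredient is precisely Higson's theorem (in \cite{Higson88}) that a split exact and $C^*$-stable functor on $\cc$ is automatically homotopy invariant. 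The paper's proof consists of citing \cite{Higson87} for the universal characterization plus \cite{Higson88} for this automatic homotopy invariance (and then remarks on the fibration category structure from \cite{Uuye}); you need the second citation just as much as the first.

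A smaller point: your closing paragraph is circular as written. You assert that ``any functor $F$ inverting $\KK$-equivalences is split exact and stable (since $\cc \to \KK$ is), and therefore lifts uniquely along $\cc \to \KK$.'' But concluding split exactness of $F$ from the fact that split exact sequences split in $\KK$ already presupposes a factorization of $F$ through $\KK$ (so that one can transport the biproduct decomposition $B \simeq A \oplus C$ through an additive $\bar F$), which is exactly the thing being proven. Stability does go through directly, since $A \to A \otimes \calK$ is a morphism in $\cc$ that is a $\KK$-equivalence, so inverting $\KK$-equivalences at once gives stability; but the split exactness claim needs either the factorization itself or a separate argument. In short: add the Higson 1988 step, and restructure the final paragraph so it does not assume the conclusion.
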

\begin{proof}
Almost all of this is proven in \cite{Higson87}. The only thing missing is the fact that split exact and stable functors are automatically homotopy invariant. This was done in \cite{Higson88}.
We remark that more is known. In \cite[Theorem 2.29]{Uuye} it was proven that $\cc$ admits the structure of a fibration category, where the equivalences are the $\KK$-equivalences and the fibrations are the Schochet fibrations. Indeed, $\KK$ is the homotopy category of this fibration category.
\end{proof}

\begin{Rmk}
More precisely we have that the induced functor
\[ \Fun(\KK,\Ab) \to \Fun(\cc,\Ab)\]
is fully faithful and the image consists of those functors that are split exact and stable.
\end{Rmk}

%%%%%%%%%%%%%%%%%%%%%%%%%%%%%%%%%%%%%%%%%%%%%%%%%%%%%%%%%%%%%%

\subsection{$L$-theory of involutive rings}\label{L-theory}
In this section we want to recall basic notions from algebraic $L$-theory. 
$L$-theory has its origins in surgery theory, where $L$-groups appear as obstruction groups to deciding whether a given degree $1$ normal map between manifolds is bordant to a homotopy equivalence, see for instance \cite{Wall}, \cite{RanickiTSO}, and \cite{CLM}. Moreover there are connections to the algebraic theory of forms, relating $L$-groups to Witt groups of forms.
Algebraic $L$-theory has been developed by Ranicki in the series of papers \cite{Ranicki1}, \cite{Ranicki2}, \cite{Ranicki3}, \cite{Ranicki4} and the two books \cite{Ranicki} and \cite{RanickiBlue}.

For our purposes it is appropriate to view $L$-theory as a functor
\[\xymatrix@R=.2cm{\N\ringsinv \ar[r]^-{L} & \Sp \\ R \ar@{|->}[r] & LR}\]
whose construction is again due to Ranicki, see e.g. \cite[chapter 13]{RanickiBlue}. To be precise, Ranicki constructs a functor of $1$-categories
\[\xymatrix{\ringsinv \ar[r]^-{L} & \Sp_1}.\]
Applying the nerve to this functor and composing the result with the canonical map $\N\Sp_1 \to \Sp$ one obtains the desired $L$-theory functor.
If we want to be precise about the involution on $R$ (e.g. if there is more than one canonical involution) we will write $L(R,\tau)$. To be more specific, $LR$ is the projective symmetric algebraic $L$-theory spectrum associated to the ring $R$. We will usually not encounter the quadratic counterpart and thus simply write $LR$ for this spectrum. The spectrum $LR$ is constructed in a way such that its homotopy groups $\pi_n(LR)$ are the algebraic bordism groups of $n$-dimensional symmetric algebraic Poincar\'e complexes over $R$. 

\begin{Rmk}
If $P$ is a finitely generated projective module over $R$ and $\varphi\colon P \xrightarrow{\cong} P^*$ is a non-degenerate hermitian (respectively skew hermitian) form on $P$ then it gives rise to an element $[P,\varphi] \in L_0(R,\tau)$ respectively in $L_2(R,\tau)$. It is a theorem of Ranicki that if $2$ is invertible in the ring $R$ all elements in the algebraic $L$-groups $\pi_{2*}(LR)$ are of this form. 

More precisely if 
$R \in \ringsinv$ such that $2 \in R^\times$, then every $2n$-dimensional symmetric complex $(C,\varphi)$ over $R$ is bordant to a chain complex that is concentrated in degree $n$ and every $2n+1$-dimensional symmetric complex $(C,\varphi)$ is bordant to one that is concentrated in degrees $n$ and $n+1$.
Moreover the $L$-groups $\pi_*(LR)$ are isomorphic to the classical $L$-groups defined via forms and formations.

As indicated above there is a version called quadratic $L$-theory built on quadratic forms as opposed to hermitian forms. The previously mentioned result can be formulated in quadratic $L$-theory and is then true in full generality. The assumption that $2 \in R^\times$ is used to ensure that symmetric and quadratic $L$-theory are equivalent.
\end{Rmk}

\begin{Prop}\label{properties of L-theory}
$L$-theory satisfies the following properties.
\begin{enumerate}
\item[(1)] Algebraic $L$-theory is naturally $4$-periodic, i.e. $\Sigma^4(LR) \simeq LR$ for all involutive rings $R$. 
\item[(2)] If $-1$ has a square root $\alpha$ in $(R,\tau)$ which satisfies $\tau(\alpha) = -\alpha$, then $L$-theory becomes $2$-periodic. As an example $L\C = L(\C,x\mapsto \overline{x})$ is $2$-periodic, but $L(\C,\id)$ is \emph{not} $2$-periodic. To be specific we have that $\pi_*(L\C) = \Z[b^{\pm 1}]$ with $|b| = 2$ and $\pi_*(L(\C,\id)) = \F_2[t^{\pm1}]$ with $|t| =4$.
\item[(3)] $L$-theory commutes with finite products of involutive rings.
\item[(4)] Ranicki showed that $L$-theory admits external products $LS \otimes LT \to L(S\otimes T)$, more precisely the functor $L\colon \ringsinv  \to \mathcal{SHC}$ admits a lax symmetric monoidal refinement, where $\mathcal{SHC}$ denotes the stable homotopy category. In particular for every commutative ring $S$ the spectrum $LS$ is a ring spectrum, and for every $S$-algebra $T$, the spectrum $LT$ is a module spectrum over $LS$. In particular every spectrum $LR$ is a module over $L\Z$.
\item[(5)] Using the notion of ad-theories, in \cite{Laures} and \cite{Laures2} the authors establish a lax symmetric monoidal refinement of $L$-theory with values in the $1$-category of symmetric spectra. This implies that the induced functor
	\[ L \colon \N\ringsinv \to \Sp \]
of $\infty$-categories also admits a lax symmetric monoidal refinement. In particular it follows that the above monoidal properties not only hold in the homotopy category of spectra, but indeed in the $\infty$-category of spectra. Again for details on the notion and theory of symmetric monoidal $\infty$-categories we refer to \cref{Multiplicative properties} and \cite[chapter 2]{LurieHA}.
\end{enumerate}
\end{Prop}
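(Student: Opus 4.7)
The plan is to address each item in turn. Most are citations to the literature, and the only substantive construction is the $2$-periodicity operator in (2). For (1), the $4$-periodicity $\Sigma^4 LR \simeq LR$ is Ranicki's classical shift theorem on symmetric Poincar\'e complexes; for (3), the splitting $L(R\times S) \simeq LR \times LS$ follows directly from the fact that chain complexes over a product ring decompose as pairs of chain complexes, and that both symmetric structures and bordisms respect this decomposition; for (4), the external product is Ranicki's tensor pairing of symmetric Poincar\'e complexes, developed in \cite[Chapter 13]{RanickiBlue}, yielding a lax symmetric monoidal refinement in the stable homotopy category.

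For (2), given $\alpha \in R$ with $\alpha^2 = -1$ and $\tau(\alpha) = -\alpha$, I would construct a class $[\alpha]\in\pi_2(LR)$ as the bordism class of the symmetric Poincar\'e complex concentrated in degree $1$ with underlying module $R$ and Poincar\'e duality given by multiplication by $\alpha$; the anti-invariance of $\alpha$ is precisely what is needed to produce a symmetric (rather than skew-symmetric) structure after the degree shift. Using the $L\Z$-module structure from (4), multiplication by $[\alpha]$ then provides a map $\Sigma^{-2}LR \to LR$ whose square agrees up to sign with the $4$-periodicity generator from (1); hence $[\alpha]$ is invertible and $LR$ is $2$-periodic. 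The explicit homotopy rings then follow from standard Witt-group computations: for $L(\C,z\mapsto\overline{z})$ one has $L_0 = \Z$ via the signature of Hermitian forms and $b = [i]$ generates $L_2 = \Z$; for $L(\C,\id)$, symmetric bilinear forms over $\C$ are classified by rank modulo $2$, giving $L_0 = \F_2$, and the absence of an anti-invariant square root of $-1$ leaves $4$-periodicity as the only periodicity.

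For (5), the lax symmetric monoidal refinement to the $\infty$-category of spectra, the plan is to invoke the ad-theoretic construction of Laures--McClure \cite{Laures,Laures2}, which models symmetric $L$-theory as a lax symmetric monoidal functor from involutive rings to the $1$-category of symmetric spectra. Applying the nerve produces a lax symmetric monoidal functor to $\N\Sp_1$, and composing with the canonical symmetric monoidal functor $\N\Sp_1 \to \Sp$ yields the desired $\infty$-categorical refinement. This last item is the main obstacle: lifting Ranicki's homotopy-level pairing to a genuinely $E_\infty$-coherent structure requires point-set control of higher coherences, which is exactly what the ad-theory framework is designed to provide; all the other items follow either by elementary manipulation of Poincar\'e complexes or by direct reference to Ranicki's foundational work.
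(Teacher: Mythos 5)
The paper itself supplies no proof of this proposition: all five items are presented as known facts, with references embedded in the statement (Ranicki's work and \cite{Laures,Laures2} for item (5)). Your sketches for (1), (3), (4), and (5) are therefore strictly more detailed than the paper's treatment, and they correctly identify the standard sources; the remark in (5) that the nerve and the functor $\N\Sp_1 \to \Sp$ preserve lax symmetric monoidal structure is exactly the reduction the paper uses (compare also the discussion after \cref{symmetric structure on slice}).

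The one genuine gap is in your argument for (2). You construct a class $[\alpha] \in \pi_2(LR)$ and then assert that ``using the $L\Z$-module structure from (4), multiplication by $[\alpha]$ provides a map $\Sigma^{-2}LR \to LR$.'' This does not follow: $[\alpha]$ lives in $\pi_2(LR)$, not in $\pi_2(L\Z)$, so the $L\Z$-module structure gives you no way to multiply by it. For a general noncommutative involutive ring $R$ the homotopy groups $\pi_*(LR)$ carry no ring structure at all, and there is no ``multiplication by $[\alpha]$'' operator to speak of. Your argument would be valid only when $R$ is a (commutative) $S$-algebra for some commutative involutive ring $S$ containing $\alpha$ in its center (e.g.\ $S=\C$ for complex $C^*$-algebras), in which case $[\alpha] \in \pi_2(LS)$ and the $LS$-module structure does the job. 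The correct general mechanism, and the one Ranicki actually uses, is different: one composes the skew-suspension isomorphism $L^n(R,\epsilon) \xrightarrow{\;\cong\;} L^{n+2}(R,-\epsilon)$ with the scaling isomorphism $L^n(R,\epsilon) \xrightarrow{\;\cong\;} L^n(R,\alpha\tau(\alpha)^{-1}\epsilon) = L^n(R,-\epsilon)$ obtained by twisting symmetric structures by the central unit $\alpha$ (here $\tau(\alpha)=-\alpha$ and $\alpha^2=-1$ give $\alpha\tau(\alpha)^{-1}=-1$). No module structure over any auxiliary ring spectrum is required, and the argument goes through for arbitrary $R$ with a central $\alpha$ satisfying the stated conditions. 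The rest of your item (2) — the identification of $b=[i]$ for $L(\C,\bar{\,\cdot\,})$, the computation $\pi_*(L(\C,\id)) = \F_2[t^{\pm 1}]$ via Witt groups of symmetric forms over $\C$, and the observation that absence of the anti-invariant square root of $-1$ blocks $2$-periodicity there — is correct.
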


%%%%%%%%%%%%%%%%%%%%%%%%%%%%%%%%%%%%%%%%%%%%%%%%%%%%%%%%%%%%%%

\subsection{$L$-theory of $C^*$-algebras.}\label{L-theory of CAlg}
Algebraic $L$-theory for $C^*$-algebras is defined by the composite
\[\xymatrix{\N\ccu \ar[r] & \N\ringsinv \ar[r]^-{L} & \Sp}.\]
This means that we will always take the involution coming from the $C^*$-algebra as input for $L$-theory. In particular $L\C$ is taken by using the complex conjugation as involution, not the identity.

Notice that so far we have not defined algebraic $L$-theory for \emph{non-unital} $C^*$-algebras, which is what we will do next. Recall from \cref{lemma unitalization} that for a $C^*$-algebra $A$ we have the associated split short exact sequence
\[\xymatrix{0 \ar[r] & A \ar[r] & A^+ \ar[r] & \C \ar[r] & 0}.\]

\begin{Def}
Let $A \in \cc$. We define its $L$-theory spectrum by the formula
\[ LA = \fib\left(L(A^+) \to L\C \right).\]
\end{Def}

\begin{Rmk}
If $A$ was unital then we have not changed the definition of the $L$-spectrum up to canonical equivalence since $L$-theory commutes with finite products. This property remains true on $\cc$, i.e. on non-unital $C^*$-algebras, see \cref{L-theory and products} and \cref{non unital L-theory}. It is worthwhile to compare the definition of non-unital $L$-theory to the remark after the proof of \cref{bladddd}.
\end{Rmk}

The following theorem is one of the crucial facts about the $L$-groups of unital $C^*$-algebras. 
\begin{Thm}\label{K groups isomorphic to L groups}
Let $A \in \cc$. Then there is a natural isomorphism
\[ \tau_A\colon K_n(A) \xrightarrow{\cong} L_n(A) \]
for all $n \in \Z$.
\end{Thm}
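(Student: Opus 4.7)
The plan is the classical one: construct the natural comparison $\tau_A$ on $\pi_0$ via projections, invert it using the continuous functional calculus in $C^*$-algebras, then extend to all degrees by $2$-periodicity and to non-unital algebras via the unitalization sequence.

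For unital $A$, I define $\tau_A\colon K_0(A) \to L_0(A)$ by sending a projection $p \in M_n(A)$ to the class of the pair $(pA^n, \varphi_p)$, where $\varphi_p$ is the restriction to $pA^n$ of the standard hermitian form $\langle (a_i),(b_i)\rangle = \sum a_i^* b_i$ on $A^n$. Self-adjointness of $p$ guarantees $\varphi_p$ is non-degenerate. The assignment is additive, respects Murray--von Neumann equivalence and is natural in $A$. For the inverse, note that (since $2$ is invertible in every complex $C^*$-algebra) by Ranicki's result recalled in \cref{properties of L-theory} every class in $L_0(A)$ is represented by a pair $(P,\varphi)$ with $P$ a finitely generated projective $A$-module and $\varphi$ an invertible self-adjoint operator on $P$. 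Viewing $\varphi$ as an element of the unital $C^*$-algebra $\End_A(P)$, continuous functional calculus applied to the characteristic functions $\chi_{(0,\infty)}$ and $\chi_{(-\infty,0)}$ yields orthogonal projections $p_+, p_-$ with $p_+ + p_- = \id_P$, producing an orthogonal decomposition $P = p_+P \oplus p_-P$. Setting $\sigma_A[P,\varphi] := [p_+ P] - [p_- P]$ defines an inverse to $\tau_A$; the check $\sigma_A \tau_A = \id$ is immediate on the canonical forms, and $\tau_A \sigma_A = \id$ follows from the fact that any $(P,\varphi)$ is isometric to the diagonal form built from its positive and negative spectral parts.

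For arbitrary $n$, both theories are $2$-periodic on complex $C^*$-algebras: $K$-theory by Bott, $L$-theory by \cref{properties of L-theory}(2) applied to $\alpha = i$ (which satisfies $\overline{i} = -i$ and $i^2 = -1$). It thus suffices to handle $\pi_1$, for which one can either pass to the suspension $SA = C_0(\R,A)$ (shifting both theories by one) and invoke the $\pi_0$-case, or describe $L_1(A)$ via skew-hermitian formations and run the same spectral decomposition argument after multiplying by $i$ to convert skew-hermitian into hermitian data. Finally, for non-unital $A$ the defining fibre sequence $LA \to L(A^+) \to L\C$ is compared with the fibre sequence for topological $K$-theory arising from excision applied to the split short exact sequence of \cref{lemma unitalization}; the five-lemma applied to the already-established isomorphisms $\tau_{A^+}$ and $\tau_\C$ yields the isomorphism for $A$.

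The main obstacle is the odd-degree case: the clean spectral-theoretic picture naturally handles hermitian forms, whereas the classical description of $L_1$ goes through formations, so some care is required to identify the two pictures. However, $2$-periodicity of complex $C^*$-algebra $L$-theory reduces this to $\pi_0$ of the suspension, after which the unital argument applies verbatim.
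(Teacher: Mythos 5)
Your proposal follows the same route as the paper: build $\tau_A$ on $\pi_0$ by equipping a projective module with the positive definite hermitian form coming from a Hilbert-module embedding, invert it via the spectral decomposition of a non-degenerate self-adjoint form into its positive and negative parts, and then reduce the general case to $n=0$ via $2$-periodicity and unitalization. The only thing the paper emphasizes that you leave tacit is Karoubi's uniqueness lemma for positive definite forms (used to make $\tau_A$ well-defined on isomorphism classes and to verify $\tau_A\sigma_A = \id$); your functional-calculus argument implicitly delivers this, but it is worth flagging that uniqueness-up-to-isometry is the key input there.
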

\begin{proof}
This is for instance proven in \cite{Miller}. Also \cite[Theorem 1.8]{Rosenberg} provides a proof. We recall briefly what we will use later. The first thing to notice is that it suffices to prove this for unital $A$. Furthermore both $K$- and $L$-theory are naturally $2$-periodic, so it suffices to prove the claim for $n = 0,1$. We will outline the $n=0$ case only.
The idea is as follows. Since $K_0(A)$ is the Grothendieck group of finitely generated projective modules over $A$ in order to construct a map 
\[ \tau_A\colon K_0(A) \to L_0(A) \]
it suffices to explain where to map the class of a finitely generated projective module $P$. It is a lemma of Karoubi, see \cite[Lemma 2.9]{Karoubi}, that any finitely generated projective $A$-module $P$ has a positive definite hermitian form $\sigma_P$ on it. Furthermore this form is unique up to isomorphism and this isomorphism may even be chosen to be homotopic to the identity. One way to construct $\sigma_P$ is to notice that any embedding $P \subseteq A^n$ gives $P$ the structure of a Hilbert-$A$-module by restricting the scalar product of $A^n$ to $P$. The association $[P] \mapsto [P,\sigma_P]$ is obviously compatible with taking direct sums and thus gives a map
\[\xymatrix{ K_0(A) \ar[r] & L_0(A) }\]
as claimed. One can use spectral theory in $C^*$-algebras to prove that any hermitian non-degenerate form is equivalent to the sum of a positive definite and a negative definite form. Using the uniqueness part of Karoubi's lemma one deduces that this construction is an inverse to the above map.
\end{proof}

\begin{Cor}\label{L-factors-through-KK}
$L$-theory is $\KK$-invariant, i.e. if $f\colon A \to B$ is a $\KK$-equivalence, then the induced morphism
\[ Lf \colon LA \to LB \]
is an equivalence of spectra. It follows that the $L$-groups may be viewed as a functor $\KK \to \Ab$.
\end{Cor}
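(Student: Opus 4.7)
The plan is to deduce $\KK$-invariance of the spectrum valued $L$-theory functor directly from $\KK$-invariance of $K$-theory, using the natural isomorphism $\tau_A\colon K_n(A) \cong L_n(A)$ produced in \cref{K groups isomorphic to L groups}. Since this isomorphism is available for every $n \in \Z$ and every $A \in \cc$ (the non-unital case being incorporated through the definition $LA := \fib(L(A^+) \to L\C)$ and the reduction argument in that theorem's proof), the corollary is essentially a formal consequence.

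Concretely, let $f\colon A \to B$ be a $\KK$-equivalence. By \cref{KK-category}(4), together with the fact that $\KK$ is triangulated with the $C^*$-algebraic suspension as loop functor (item (3)), each $K_n\colon \cc \to \Ab$ factors through $\KK$; in particular $K_n(f)$ is an isomorphism for all $n$. Naturality of $\tau$ then yields a commutative square
\[\xymatrix@C=1.3cm{K_n(A) \ar[r]^-{K_n(f)} \ar[d]^{\tau_A}_\cong & K_n(B) \ar[d]^{\tau_B}_\cong \\ L_n(A) \ar[r]_-{L_n(f)} & L_n(B)}\]
whose vertical arrows are isomorphisms, so two-out-of-three forces $L_n(f)$ to be an isomorphism as well. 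Because a map of spectra is an equivalence if and only if it is a $\pi_*$-isomorphism, we conclude that $Lf\colon LA \to LB$ is an equivalence of spectra. The remaining assertion that each $L_n$ descends to a functor $\KK \to \Ab$ is then immediate from the universal property of the localization recorded in \cref{Uuye}.

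There is essentially no obstacle beyond assembling already available pieces: the substantive content sits in \cref{K groups isomorphic to L groups}, and the corollary only extracts a formal consequence of naturality. The one point worth a brief sanity check is the non-unital case, where both $LA$ and $\tau_A$ are defined via unitalization, but this is precisely the generality in which \cref{K groups isomorphic to L groups} was stated, so no extra input is required.
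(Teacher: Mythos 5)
Your proof is correct and is precisely the argument the paper intends but leaves implicit (the corollary appears without a proof environment): combine the natural isomorphism $\tau_A\colon K_n(A)\cong L_n(A)$ from \cref{K groups isomorphic to L groups} with the $\KK$-invariance of $K$-theory, then use the two-out-of-three property in the naturality square and the fact that an equivalence of spectra is detected on homotopy groups. The final appeal to \cref{Uuye} for descending the group-valued functors to $\KK$ is also exactly what is needed.
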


%%%%%%%%%%%%%%%%%%%%%%%%%%%%%%%%%%%%%%%%%%%%%%%%%%%%%%%%%%%%%%

\subsection{KK-theory}
The idea of this section is to understand the natural transformation $\tau \colon K_0 \to L_0$ of \cref{K groups isomorphic to L groups} in terms of a universal property which will have a direct analogue in the case where we study the spectrum valued functors.

\begin{Lemma}
The canonical map 
\[\xymatrix{ \Hom_{\Fun(\KK,\Ab)}(K_0,L_0) \ar[r]^-\cong & \Hom_{\Fun(\cc,\Ab)}(K_0,L_0) }\]
is a bijection.
\end{Lemma}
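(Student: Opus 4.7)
The plan is to deduce this lemma as a formal consequence of the localization result recalled in Theorem \ref{Uuye}. That theorem, together with the remark immediately following it, tells us that the restriction functor
\[ \Fun(\KK, \Ab) \longrightarrow \Fun(\cc, \Ab) \]
induced by the localization $\cc \to \KK$ is fully faithful, with essential image the split exact and stable functors. In particular, for any two functors $F, G\colon \KK \to \Ab$ the canonical map $\Hom_{\Fun(\KK,\Ab)}(F,G) \to \Hom_{\Fun(\cc,\Ab)}(F,G)$ is a bijection.

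To apply this to our situation, I would simply check that both $K_0$ and $L_0$ belong to the essential image, i.e.\ that they factor (necessarily uniquely) through $\cc \to \KK$. For $K_0$ this is part of Theorem \ref{KK-category}(4), which provides the identification $K_0 \cong \Hom_{\KK}(\C, -)$. For $L_0$, Corollary \ref{L-factors-through-KK} records that any $\KK$-equivalence is sent to an isomorphism of $L$-groups (in fact to an equivalence of $L$-spectra), so the universal property of the localization in Theorem \ref{Uuye} supplies the required factorization.

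With both functors factored through $\KK$, full faithfulness of the restriction immediately yields the claimed bijection
\[ \Hom_{\Fun(\KK, \Ab)}(K_0, L_0) \xrightarrow{\;\cong\;} \Hom_{\Fun(\cc, \Ab)}(K_0, L_0). \]
There is no real obstacle: injectivity is already forced because every object of $\KK$ is literally an object of $\cc$, so a transformation on $\KK$ is determined on objects by its restriction; the content of the statement is therefore the surjectivity, and this is precisely what the universal property of $\KK$ as a localization guarantees — a natural transformation $K_0 \Rightarrow L_0$ defined on $\cc$ is automatically compatible with the more general $\KK$-morphisms, because such compatibility is a property that any target functor inverting $\KK$-equivalences enjoys once it is known to factor through $\KK$.
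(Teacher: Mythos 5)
Your proof is correct and takes essentially the same route as the paper, whose entire argument is the one-liner ``This follows immediately from the remark after \cref{Uuye} and \cref{L-factors-through-KK}.'' You spell out the same two ingredients — full faithfulness of the restriction $\Fun(\KK,\Ab)\to\Fun(\cc,\Ab)$ and the fact that $K_0$ (via \cref{KK-category}(4)) and $L_0$ (via \cref{L-factors-through-KK}) both factor through $\KK$ — and your informal remarks about injectivity and surjectivity at the end, while not strictly necessary once full faithfulness is invoked, are accurate.
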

\begin{proof}
This follows immediately from the remark after \cref{Uuye} and \cref{L-factors-through-KK}.
\end{proof}

Since $K$-theory becomes corepresentable on $\KK$, see \cref{KK-category} part (4), we obtain the following.

\begin{Cor}\label{tau second time}
From the enriched Yoneda lemma we see that
\[ \Hom_{\Fun(\KK,\Ab)}(K_0,L_0) \cong \Hom_{\Fun(\KK,\Ab)}(\KK(\C,-),L_0) \cong L_0(\C) \cong \Z.\]
In particular the transformation $\tau \colon K_0 \to L_0$ as given in \cref{K groups isomorphic to L groups} corresponds to an element of $\Z$.
Under this isomorphism $\tau$ is sent to $1 \in \Z$.
\end{Cor}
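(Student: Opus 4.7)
The statement of the corollary contains two items to verify: (i) the chain of isomorphisms on the first line, and (ii) the fact that the class of $\tau$ lands at $1 \in \Z$ under the resulting identification. The first is essentially a formality from earlier parts of the paper, while the second amounts to tracking the construction of $\tau$ through Yoneda.

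For (i), I would apply the (classical) Yoneda lemma in the category of $\Ab$-valued functors on $\KK$. The corepresentability $K_0 \cong \KK(\C,-)$ from \cref{KK-category}(4) converts the first hom-set into the stated one, and evaluation at $\id_\C \in \KK(\C,\C)$ gives the isomorphism $\Hom_{\Fun(\KK,\Ab)}(\KK(\C,-),L_0) \cong L_0(\C)$. Finally, $L_0(\C) \cong \Z$ is read off from \cref{properties of L-theory}(2), which records $\pi_*(L\C) \cong \Z[b^{\pm 1}]$ with $|b|=2$.

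For (ii), I would unwind the construction of $\tau_A\colon K_0(A) \to L_0(A)$ from the proof of \cref{K groups isomorphic to L groups} applied to $A=\C$. Under the corepresentability isomorphism $K_0(\C) \cong \KK(\C,\C)$, the unit $\id_\C$ corresponds to the class $[\C] \in K_0(\C)$ of the rank-one free module, so the Yoneda image of $\tau$ is $\tau_\C([\C]) \in L_0(\C)$. By Karoubi's construction, this is the bordism class $[\C,\sigma_\C]$ of $\C$ equipped with the canonical positive definite hermitian form $\sigma_\C(x,y) = \bar x y$.

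It therefore remains to identify $[\C,\sigma_\C]$ with the generator $1 \in L_0(\C) \cong \Z$. The main (and only slightly substantive) step is this classical computation: since $2$ is invertible in $\C$, every symmetric Poincar\'e complex of dimension $0$ is bordant to a non-degenerate hermitian form on a finitely generated projective $\C$-module (as recalled in the remark after \cref{properties of L-theory}), and the resulting isomorphism $L_0(\C) \cong W(\C)$ with the hermitian Witt group is given by the signature. The rank-one positive definite form $\sigma_\C$ has signature $+1$, hence corresponds to the generator $1\in\Z$. This is the main obstacle in the sense that it is the only step where one must do more than chase definitions; it is however standard and can be found, e.g., in Ranicki's books. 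Combining the three identifications yields $\tau \leftrightarrow 1$, proving the corollary.
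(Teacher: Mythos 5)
Your proposal is correct and follows essentially the same route as the paper: reduce via Yoneda to evaluating $\tau_\C$ on $[\id_\C]$, identify this with the rank-one free module equipped with its standard positive definite hermitian form, and observe that its signature is $1$ under the isomorphism $L_0(\C)\cong\Z$. The additional remarks you make about why $L_0(\C)\cong W(\C)$ is computed by the signature (because $2\in\C^\times$) are a slight elaboration of what the paper takes as given, but the argument is the same.
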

\begin{proof}
We only need to check the image of $\tau$ in $\Z$ under the above chain of isomorphisms.
By definition the isomorphism
\[\Hom_{\Fun(\KK,\Ab)}(K_0,L_0)\cong L_0(\C)\]
maps $\tau$ to $\tau_\C(\id)$ where
\[\tau_\C\colon \KK(\C,\C) \cong K_0(\C) \to L_0(\C).\]
Under the isomorphism $\KK(\C,\C) \cong K_0(\C)$ the element $[\id]$ is mapped to the projective module $\C$. The isomorphism of \cref{K groups isomorphic to L groups} takes this module to $\C$ equipped with the standard hermitian form over it. Furthermore the isomorphism $L_0(\C) \cong \Z$ takes the signature of this hermitian form which is clearly $1$.
\end{proof}

%%%%%%%%%%%%%%%%%%%%%%%%%%%%%%%%%%%%%%%%%%%%%%%%%%%%%%%%%%%%%%

\section{Construction of the map}\label{Construction of the map}

In this section we introduce the symmetric monoidal $\infty$-category $\KK_\infty$ and use this $\infty$-category to construct the natural map $kA \to \ell A$.

\subsection{The $\infty$-category $\KK_\infty$}\label{KK infty category}
We have argued how we can view the transformation from $K_0$ to $L_0$ using the universal property of the $\KK$ category. The main idea now is to mimic the universal properties we used, namely that $K$-theory is corepresentable on $\KK$. 
The following proposition is an important construction in $(\infty)$-categories, which we will use to define $\KK_\infty$.
\begin{Prop}\label{localization}
Suppose $\c$ is an $\infty$-category and $W$ is a collection of morphisms in $\c$. Then there is an $\infty$-category $\c[W^{-1}]$ and a functor of $\infty$-categories
\[ i\colon \c \to \c[W^{-1}] \] that is a \emph{Dwyer-Kan localization} along the morphisms in $W$, i.e. for every $\infty$-category $\Dd$ the functor
\[\xymatrix{\Fun(\c[W^{-1}],\Dd) \ar[r]^-{i^*} & \Fun(\c,\Dd)}\]
is fully-faithful and the image consists of those functors that send morphisms in $W$ to equivalences in $\Dd$.
\end{Prop}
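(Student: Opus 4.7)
The plan is to realize $\c[W^{-1}]$ as a pushout in the presentable $\infty$-category $\Cat_\infty$ of (small) $\infty$-categories. Every morphism $w\in W$ corresponds to a functor $\Delta^1\to \c$; assembling these we get a single map $\coprod_{w\in W}\Delta^1\to \c$. Let $J$ denote the free $\infty$-groupoid on an arrow (equivalently, the nerve of the contractible groupoid with two objects and a unique isomorphism between them); note $J\simeq \ast$ but the map $\Delta^1\to J$ is not an equivalence in $\Cat_\infty$. I would then define
\[
\c[W^{-1}] \;:=\; \c \;\sqcup_{\coprod_{W}\Delta^1}\; \coprod_{W} J
\]
as a pushout in $\Cat_\infty$, with $i\colon \c\to \c[W^{-1}]$ the canonical leg of the pushout. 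The idea is that tensoring with $J$ over $\Delta^1$ universally forces the given arrows to become equivalences.

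To verify the universal property I would use that $\Cat_\infty$ is cartesian closed with internal hom $\Fun(-,-)$, so the functor $\Fun(-,\Dd)\colon \Cat_\infty^{\mathrm{op}}\to \Cat_\infty$ preserves limits, and in particular converts pushouts in its first argument into pullbacks. Applied to the defining square this gives
\[
\Fun(\c[W^{-1}],\Dd)\;\simeq\;\Fun(\c,\Dd)\times_{\prod_{W}\Fun(\Delta^1,\Dd)}\prod_{W}\Fun(J,\Dd).
\]
The key input is that the restriction functor $\Fun(J,\Dd)\to \Fun(\Delta^1,\Dd)$ is a fully faithful inclusion whose essential image is the full subcategory of equivalences in $\Dd$; this is essentially the universal property of the groupoidification $\Delta^1\to J$. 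Pulling back this full inclusion along the evaluation $\Fun(\c,\Dd)\to \prod_{W}\Fun(\Delta^1,\Dd)$ yields a fully faithful inclusion with essential image precisely those functors $\c\to \Dd$ that send every $w\in W$ to an equivalence, which is the desired universal property.

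The main technical obstacle is justifying the two $\infty$-categorical ingredients I am using: the existence of arbitrary small colimits in $\Cat_\infty$, and the fact that $\Fun(-,\Dd)$ turns such colimits into limits. Both are standard consequences of $\Cat_\infty$ being presentably symmetric monoidal under the cartesian product, so one would cite Lurie (\textit{Higher Topos Theory}, Section 3.1 and Chapter 5) rather than reprove them. An alternative, more explicit approach is to work in marked simplicial sets: mark exactly the edges in $W$ to obtain a marked simplicial set $(\c,W)$, take a fibrant replacement in Lurie's cartesian model structure on marked simplicial sets, and read off the universal property directly from the defining Quillen adjunction. The pushout construction is cleaner for the $\infty$-categorical bookkeeping we will need later, whereas the marked simplicial set picture is useful if one wants a concrete model of $\c[W^{-1}]$.
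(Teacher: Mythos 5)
Your proof is correct, and it takes a genuinely different route from the paper. The paper disposes of this statement by citing the existing literature: it points to Dwyer--Kan, to Lurie's treatment in \cite[Def.~1.3.4.1, Rmk.~1.3.4.2]{LurieHA}, and then indicates that $\c[W^{-1}]$ may be obtained as a fibrant replacement of the marked simplicial set $(\c,W)$ in the cartesian model structure on $\Set_\Delta^+$ --- precisely the ``alternative, more explicit approach'' you sketch at the end. Your main argument is instead a clean categorical construction: realize $\c[W^{-1}]$ as the pushout $\c\sqcup_{\coprod_W\Delta^1}\coprod_W J$ in $\Cat_\infty$, apply $\Fun(-,\Dd)$ (which sends colimits to limits because $\Cat_\infty$ is cartesian closed and presentable), and observe that what results is the pullback of the fully faithful inclusion $\prod_W\Fun(J,\Dd)\hookrightarrow\prod_W\Fun(\Delta^1,\Dd)$ along the evaluation map, hence fully faithful with the desired essential image. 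The only ingredient that deserves an explicit citation is the base case --- that $\Fun(J,\Dd)\to\Fun(\Delta^1,\Dd)$ is fully faithful with essential image the equivalences --- since phrasing it as ``the universal property of the groupoidification'' is in danger of being circular; but this base case is a standard independent fact about the walking isomorphism and can be quoted from Joyal or from Lurie. What your approach buys is a self-contained, model-independent proof that also exhibits $\c[W^{-1}]$ as an explicit colimit, which is sometimes convenient; the cost is that it leans on presentability of $\Cat_\infty$ and stability of fully faithful functors under base change, whereas the marked simplicial set route is more hands-on and gives a concrete fibrant model.
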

\begin{proof}
This essentially goes back to \cite{Dwyer}. An argument in the language of $\infty$-categories is given for instance in \cite[Def. 1.3.4.1 and Rmk. 1.3.4.2]{LurieHA}. One can define the localization to be a fibrant replacement of the object $(\c,W)$ in the cartesian model structure on marked simplicial sets $\Set_\Delta^+$. 
\end{proof}
\begin{Rmk}
This universal property characterizes the $\infty$-category $\c[W^{-1}]$ up to equivalence.
\end{Rmk}

\begin{Def}\label{stableKKcategory}
We define the $\infty$-category $\KK_\infty$ to be the $\infty$-category obtained from the category $\cc$ by inverting the $\KK$-equivalences. In formulas we have that $\KK_\infty = \N\cc[W^{-1}]$ where $W$ denotes the collection of $\KK$-equivalences.
\end{Def}
\begin{Rmk}
The $\infty$-category $\KK_\infty$ is a full subcategory of the (presentable) $\infty$-category associated to the (combinatorial) model category of pro-$C^*$-algebras, see \cite{BJM}. The homotopy category of $\KK_\infty$ is equivalent to the category $\KK$ of \cref{KK-category}.
\end{Rmk}

The following proposition is well known and has been established in various different situations. For completeness we give an argument as well. For the notion and theory of stable $\infty$-categories, see \cite[chapter 1]{LurieHA}.
\begin{Prop}\label{KK-stable}
The $\infty$-category $\KK_\infty$ is a small stable $\infty$-category.
\end{Prop}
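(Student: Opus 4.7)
The plan is to show $\KK_\infty$ is essentially small, pointed, admits finite limits, and has an invertible loop functor; stability then follows from Lurie's criterion that a pointed $\infty$-category with finite limits is stable as soon as $\Omega$ is an equivalence (\cite[Chapter 1]{LurieHA}). Essential smallness is immediate, since $\cc$ itself is essentially small (separable $C^*$-algebras have cardinality bounded by $2^{\aleph_0}$), hence so is its Dwyer-Kan localization.

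For pointedness and finite limits I would appeal to Uuye's theorem, already invoked in the remark after \cref{Uuye}, that $\cc$ carries the structure of a Brown fibration category with $\KK$-equivalences as weak equivalences and Schochet fibrations as fibrations. The zero $C^*$-algebra is both initial and terminal in $\cc$ (since $^*$-homomorphisms are not required to be unital), so it descends to a zero object in $\KK_\infty$. By a theorem of Cisinski, refined by Szumi\l{}o, the $\infty$-categorical localization of a Brown fibration category admits all finite limits, and the localization functor preserves the terminal object as well as pullbacks along fibrations. This yields pointedness and finite limits in $\KK_\infty$.

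To show $\Omega$ is an equivalence, I would compute $\Omega A$ as the homotopy fiber of $0 \to A$ using the fibration category structure. The standard short exact sequence $0 \to SA \to CA \to A \to 0$ in $\cc$, with cone $CA = C_0((0,1], A)$ which is $\KK$-equivalent to $0$ (being contractible as a $C^*$-algebra) and whose quotient map to $A$ is a Schochet fibration, identifies $\Omega A \simeq SA$ with the $C^*$-algebraic suspension. Bott periodicity in the form $S^2A \simeq A$ in $\KK$, recorded in \cref{KK-category}(3), then yields $\Omega^2 \simeq \id$ in $\KK_\infty$, so in particular $\Omega$ is an equivalence, and Lurie's criterion applies.

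The main obstacle is the identification in the third step of the intrinsic $\infty$-categorical loop with the $C^*$-algebraic suspension: one needs that the evaluation $CA \to A$ is genuinely a Schochet fibration (folklore, but something to check) and that the comparison between Brown fibration categories and finitely complete $\infty$-categories correctly identifies the resulting homotopy fiber with $SA$ up to $\KK$-equivalence. Once this is in place, every other step is essentially formal, leveraging Uuye's fibration category structure and Kasparov's Bott periodicity as black-box input.
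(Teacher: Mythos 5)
Your overall strategy matches the paper's: appeal to Lurie's criterion (pointed, finite limits, $\Omega$ an equivalence), use Uuye's fibration category structure together with Cisinski/Szumi{\l}o to get finite limits, and feed in Bott periodicity to get the invertibility of $\Omega$. The difference is that the paper interposes a small but crucial general lemma (\cref{pointedness}) that lets one pass from properties of the homotopy category $\KK$ to properties of the $\infty$-category $\KK_\infty$, and both of the shortcuts you take quietly require it.

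First, on pointedness: your claim that the zero algebra ``descends to a zero object in $\KK_\infty$'' is not automatic. The localization of a fibration category preserves the terminal object, so $L(0)$ is terminal in $\KK_\infty$; but localizations do not in general preserve \emph{initial} objects, and nothing about $0$ being initial in $\cc$ directly produces contractibility of $\Map_{\KK_\infty}(0,A)$. The paper instead applies \cref{pointedness}(1): once $\KK_\infty$ has finite limits and its homotopy category $\KK$ is pointed, contractibility of the mapping spaces out of the terminal object follows by a loop-space argument. Your route needs the same idea.

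Second, on the loop functor: your identification $\Omega A\simeq SA$ (via the cone/Schochet-fibration sequence) and Bott periodicity $S^2A\simeq A$ give $\Omega^2\simeq\id$ \emph{on the homotopy category} $\KK$. To conclude that $\Omega$ is an equivalence of $\infty$-categories you again need the statement that a limit-preserving functor between pointed finitely complete $\infty$-categories is an equivalence once it is so on homotopy categories --- this is \cref{pointedness}(2). The paper's version of this step is in fact slightly cleaner than yours: it never needs to identify $\Omega$ concretely as the $C^*$-algebraic suspension (and so avoids your ``main obstacle'' of verifying that $CA\to A$ is a Schochet fibration and that the fibration-category fiber computes the intrinsic $\Omega$); it simply observes that $\Omega$ preserves limits, that $h\Omega$ is the shift of the triangulated category $\KK$, and that shifts are equivalences. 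So your extra concreteness buys an explicit model for $\Omega$ but at the cost of extra verifications that the paper's more abstract argument sidesteps. Either way, the proof is sound once you make the homotopy-category-to-$\infty$-category bridge explicit.
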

To prove this we prove the following preliminary lemma. A similar statement in the case where $\c$ is the $\infty$-category associated to a fibration category has also been made in \cite[section 3]{Cisinski}.
\begin{Lemma}\label{pointedness}
Let $\c$ be an $\infty$-category.
\begin{enumerate}
\item[(1)] If $\c$ admits all finite limits then $\c$ is pointed if its homotopy category $h\c$ is.
\item[(2)] Let $F\colon \c \to \Dd$ be a limit-preserving functor between pointed $\infty$-categories that admit all finite limits. Then $F$ is an equivalence if and only if $hF\colon h\c \to h\Dd$ is.
\end{enumerate}
\end{Lemma}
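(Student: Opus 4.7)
For part (1), the plan is to produce a zero object in $\c$. Since $\c$ admits finite limits, it has a terminal object $T$ (the empty limit). The canonical functor $\c \to h\c$ preserves terminal objects, so $T$ is terminal in $h\c$; since $h\c$ is pointed, its zero object is in particular terminal and therefore isomorphic to $T$, so $T$ is also initial in $h\c$. This already gives $\pi_0 \Map_\c(T, Y) = \Hom_{h\c}(T, Y) = \ast$ for every $Y \in \c$. To upgrade this to contractibility of $\Map_\c(T, Y)$, I would iterate the loop construction in $\c$: picking any $f \colon T \to Y$ (which exists since $\pi_0$ is nonempty) and using that $T$ is terminal, the pullback $\Omega Y := T \times_{Y} T$ both computes a loop object in $\c$ and, by the universal property of mapping spaces into pullbacks, satisfies $\Map_\c(T, \Omega Y) \simeq \Omega_f \Map_\c(T, Y)$. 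Iterating yields $\pi_n \Map_\c(T, Y) \cong \pi_0 \Map_\c(T, \Omega^n Y)$, which is a singleton by the same argument applied to $\Omega^n Y \in \c$. Hence $\Map_\c(T, Y)$ is contractible, $T$ is initial in $\c$, and $\c$ is pointed.

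For the ``if'' direction of (2) (the converse being immediate), essential surjectivity of $F$ follows from that of $hF$ because an equivalence in $\c$ is precisely an isomorphism in $h\c$. The content is full faithfulness: the map $F \colon \Map_\c(X, Y) \to \Map_\Dd(FX, FY)$ should be a homotopy equivalence for each pair $X, Y$. Bijectivity on $\pi_0$ is immediate from full faithfulness of $hF$. Since $F$ preserves finite limits and both categories are pointed, $F$ preserves the zero object (it preserves the terminal, which equals the zero in $\Dd$) and hence the loop endofunctor $\Omega$. Combined with the identification $\pi_n \Map_\c(X, Y) \cong \pi_0 \Map_\c(X, \Omega^n Y)$ at the zero basepoint, obtained exactly as in part (1), the $\pi_0$-bijection furnished by $hF$ applied to $\Omega^n Y$ transports to a $\pi_n$-bijection based at the zero map.

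The main obstacle I anticipate is handling $\pi_n \Map_\c(X, Y)$ at a non-zero basepoint $f$, since in a merely pointed $\infty$-category with finite limits the path components of a mapping space need not be mutually homotopy equivalent. To bridge this, I would use for any $f \colon X \to Y$ the fiber sequence $\fib(f) \to X \to Y$ in $\c$, which is preserved by $F$. Evaluating $\Map_\c(Z, -)$ yields a compatible fiber sequence of mapping spaces in both $\c$ and $\Dd$; comparing the associated long exact sequences of homotopy groups, together with the zero-basepoint bijection already established and an application of the five lemma, inductively propagates the $\pi_n$-bijection to every basepoint, completing the proof of full faithfulness and hence of $F$ being an equivalence.
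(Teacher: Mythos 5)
Your part (1) follows the paper's proof exactly: take a terminal object $T$, observe from pointedness of $h\c$ that $\pi_0\Map_\c(T,Y)$ is a singleton, and iterate $\Omega$ (realized as a pullback in $\c$, so that $\Map_\c(T,\Omega Y)\simeq\Omega\Map_\c(T,Y)$) to kill the higher homotopy groups. This is correct.

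For part (2), your treatment of essential surjectivity, the $\pi_0$-bijection, and the reduction of $\pi_n$ at the \emph{zero} basepoint to $\pi_0\Map_\c(X,\Omega^n Y)$ also matches the paper, which at this point merely says ``using that $F$ preserves limits one shows.'' You are right that there is a genuine subtlety at non-zero basepoints which the paper glosses over. However, the fix you propose does not close the gap. If you apply $\Map_\c(Z,-)$ to a fiber sequence $\fib(f)\to X\to Y$ in $\c$, the resulting fiber sequence of spaces $\Map_\c(Z,\fib(f))\to\Map_\c(Z,X)\to\Map_\c(Z,Y)$ is based at $0\in\Map_\c(Z,Y)$, so its long exact sequence only involves $\pi_n$ of $\Map_\c(Z,X)$ at basepoints lying in the image of $\Map_\c(Z,\fib(f))$, i.e.\ at maps $g\colon Z\to X$ with $f\circ g\simeq 0$. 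An arbitrary $g\colon Z\to X$ need not factor through $\fib(f)$ for any $f$, so the five-lemma ladder you describe never reaches the basepoint you are trying to control.

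A construction that does work and stays entirely within the finite limits $F$ preserves is the self-equalizer: for $g\colon Z\to X$ set $E_g := Z\times_{X\times X}X$ (pullback of $(g,g)$ against the diagonal $\Delta\colon X\to X\times X$). The first projection $E_g\to Z$ is split by the canonical section $\sigma=(\id_Z,g)\colon Z\to E_g$, and one checks directly that
\[
\fib_{\id_Z}\bigl(\Map_\c(Z,E_g)\to\Map_\c(Z,Z)\bigr)\simeq\Omega_g\Map_\c(Z,X),
\]
with $\sigma$ corresponding to the constant loop at $g$. Because of the splitting the long exact sequence collapses to short exact sequences, giving a natural isomorphism
\[
\pi_{n+1}\bigl(\Map_\c(Z,X),g\bigr)\cong\ker\Bigl(\pi_n\bigl(\Map_\c(Z,E_g),\sigma\bigr)\to\pi_n\bigl(\Map_\c(Z,Z),\id_Z\bigr)\Bigr),
\]
and the same formula holds in $\Dd$ with $FE_g\simeq E_{Fg}$ since $F$ preserves pullbacks. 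This lowers the degree by one, so one runs a simultaneous induction over all $n$, all pairs of objects, and all basepoints, with base case the $\pi_0$-bijection. Replacing your $\fib(f)\to X\to Y$ fiber sequence with this split equalizer fiber sequence would make your argument complete.
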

\begin{proof}
To show the first part let $\ast \in \c$ be a terminal object. We need to show that $\Map_{\c}(\ast,X)$ is contractible for all objects $X \in \c$.
The condition that $h\c$ is pointed implies that $\pi_0(\Map_{\c}(\ast,X)) = \{\ast\}$ for all $X\in \c$. Moreover, the diagram
\[\xymatrix{\Omega X \ar[r] \ar[d] & \ast \ar[d] \\ \ast \ar[r] & X}\]
is a pullback in the $\infty$-category $\c$. Thus we obtain that
\[ \Map_\c(\ast,\Omega X) \simeq \Omega \Map_\c(\ast,X)\]
and hence for all objects $X \in \c$ we get that
\[ \pi_n(\Map_\c(\ast,X)) \cong \pi_0(\Map_\c(\ast,\Omega^n X)) = \{\ast\}\]
so part $(1)$ follows.

To see the second part we recall that a functor is an equivalence if and only if it is essentially surjective and fully-faithful (meaning the induced map on mapping \emph{spaces} is an equivalence).
Essential surjectivity follows from the fact that $hF$ is an equivalence. Now we consider the induced map
\[\xymatrix{\Map_\c(X,Y) \ar[r] & \Map_{\Dd}(FX,FY).}\]
which is a bijection on $\pi_0$ for all objects $X,Y \in \c$ by the assumption that $hF$ is an equivalence. Using that $F$ preserves limits one shows that it also induces a bijection on $\pi_n$ for all $n \geq 0$ and all objects $X,Y \in \c$.
\end{proof}

\begin{proof}[Proof of \cref{KK-stable}]
First we notice that the category $\cc$ is small and thus the $\infty$-category $\N\cc$ is also small. It follows that the localization $\KK_\infty$ is small as well. Thus it remains to show that $\KK_\infty$ is stable.
Following \cite[Corollary 1.4.2.27]{LurieHA} we will show that $\KK_\infty$ admits all finite limits, is pointed and that the loop functor $\Omega\colon \KK_\infty \to \KK_\infty$ is an equivalence.

Since $\cc$ is a fibration category, see \cite[Theorem 2.29]{Uuye}, it follows by \cite[section 3]{Cisinski} that the simplicial localization admits a homotopy terminal object and homotopy pullbacks. Using \cite[Theorem 4.2.4.1]{LurieHTT} one deduces that $\KK_\infty$ admits a terminal object and pullbacks. It follows from the dual statement of \cite[Corollary 4.4.2.4]{LurieHTT} that $\KK_\infty$ thus admits all finite limits. Also consult \cite{Szumilo} for a discussion of the relation between (co)fibration categories and $\infty$-categories. 

To see that $\KK_\infty$ is pointed we apply \cref{pointedness}, using that $\KK$ is pointed.
Since $\Omega$ preserves all limits it follows that $\Omega\colon \KK_\infty \to \KK_\infty$ is an equivalence if it induces an equivalence on $\KK$, again recall \cref{pointedness}. This follows since $\KK$ is known to be triangulated with shift functor induced by $\Omega$, see e.g. \cite[Theorem 2.29]{Uuye} or \cref{KK-category} part (3).
\end{proof}

\begin{Def}
Recall that we denote by $\s$ the $\infty$-category of spaces, by $\Sp$ the $\infty$-category of spectra and by $\Sp_{\ge0} \subseteq \Sp$ the full subcategory of connective spectra.
There are canonical maps 
\[\xymatrix{\Sp \ar[r]^-{\tau_{\geq0}} & \Sp_{\geq 0} \ar[r]^-{\Omega^\infty} & \s}\]
which are all right adjoints.
For two $\infty$-categories $\Cc,\Cc'$ that admit finite products we denote the functor category of product preserving functors by $\Fun^{\Pi}(\c,\c')$, and if $\c,\c'$ admit all finite limits we denote the functor category of limit-preserving functors by $\Fun^{\lex}(\c,\c')$.
\end{Def}

We need the following general lemma about $\infty$-categories.

\begin{Lemma}\label{Lemma:Yoneda}
Let $\Dd$ be a stable resp. additive $\infty$-category. Then the Yoneda embedding admits an essentially unique refinement as indicated in the following diagram.
\[\xymatrix@R=.7cm{\Dd \ar[r] \ar@/_1.5pc/@{-->}[dr] & \Fun^\lex(\Dd^{op},\s) & \mathrm{resp.} & \Dd \ar[r] \ar@/_1.5pc/@{-->}[dr] & \Fun^\Pi(\Dd^{op},\s)  \\ 
& \Fun^\lex(\Dd^{op},\Sp) \ar[u]_{\Omega^\infty}^\simeq  & & & \Fun^\Pi(\Dd^{op},\Sp_{\geq0}) \ar[u]_{\Omega^\infty}^\simeq }\]
We will write $\underline{X}$ for the image of an object $X \in \Dd$ under these functors.
Given any object $X \in \Dd$ and any functor $F \in \Fun^\lex(\Dd,\Sp)$, resp. $F' \in \Fun^\Pi(\Dd,\Sp_{\geq0})$, there are equivalences
\[ \xymatrix@R=.3cm{\Map_{\Fun(\Dd,\Sp)}(\underline{X},F) \ar[r]^-\simeq & \Omega^\infty FX \\ 
		\Map_{\Fun(\Dd,\Sp_{\geq 0})}(\underline{X},F') \ar[r]^-\simeq & \Omega^\infty F'X }\]
of spaces.
\end{Lemma}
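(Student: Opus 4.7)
The plan is to show that in both the stable and additive settings the vertical map $\Omega^\infty_*$ is an equivalence of $\infty$-categories. Given this, the ordinary Yoneda embedding $\Dd \to \Fun^\lex(\Dd^{op},\s)$ (which lands in $\Fun^\lex$ resp.\ $\Fun^\Pi$ because representables preserve limits resp.\ products) admits an essentially unique lift, and the mapping space formula follows by combining the equivalence with the classical enriched Yoneda lemma in $\s$.

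For the stable case I would use that $\Fun^\lex(\Dd^{op},\Sp)$ is always the stabilization $\Sp(\Fun^\lex(\Dd^{op},\s))$ (standard stabilization formalism, cf.\ \cite[\S 1.4.2]{LurieHA}), and so it suffices to show that $\Fun^\lex(\Dd^{op},\s)$ is itself already stable, since then the canonical map from its stabilization is an equivalence. This category is pointed (since $\Dd^{op}$ is) and admits all finite limits computed pointwise, so stability reduces to checking that the loop functor is invertible. Pointwise computation of limits gives $\Omega F \simeq \Omega_\s \circ F$, and limit-preservation of $F$ yields $\Omega_\s\circ F \simeq F\circ \Omega_{\Dd^{op}}$; but $\Omega_{\Dd^{op}}$ is an equivalence because $\Dd$ is stable (and stability is self-dual), so $\Omega$ on $\Fun^\lex(\Dd^{op},\s)$ is an equivalence. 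The additive case is parallel: via the identification $\Sp_{\geq 0}\simeq \mathrm{CMon}^{\mathrm{gp}}(\s)$ of connective spectra with grouplike $E_\infty$-monoids, one reduces to showing that $\Fun^\Pi(\Dd^{op},\s)$ inherits additivity from $\Dd$, which amounts to the statement that every product-preserving functor from a semi-additive $\infty$-category to $\s$ factors canonically through $\mathrm{CMon}^{\mathrm{gp}}(\s)$ using the biproduct structure, together with the commutation $\mathrm{CMon}^{\mathrm{gp}}(\Fun^\Pi(\Dd^{op},\s))\simeq \Fun^\Pi(\Dd^{op},\mathrm{CMon}^{\mathrm{gp}}(\s))$.

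Once these equivalences are in place, the essentially unique lift of Yoneda is immediate, and for $X\in\Dd$ and $F\in\Fun^\lex(\Dd^{op},\Sp)$ one computes
$$\Map_{\Fun^\lex(\Dd^{op},\Sp)}(\underline X, F) \simeq \Map_{\Fun^\lex(\Dd^{op},\s)}(\Omega^\infty \underline X, \Omega^\infty F) \simeq \Omega^\infty F(X),$$
where the first equivalence uses that $\Omega^\infty_*$ is an equivalence and the second is the ordinary Yoneda lemma applied to the representable $\Omega^\infty \underline X \simeq \Map_\Dd(-,X)$; the additive case runs identically. The main obstacle is verifying that the functor categories in question inherit stability resp.\ additivity from $\Dd$; everything else is a formal consequence of the universal properties of $\Sp$ and $\Sp_{\geq 0}$.
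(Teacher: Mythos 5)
Your proposal is correct and follows the same two-step structure as the paper's proof: first show that post-composition with $\Omega^\infty$ is an equivalence on the relevant lex/product-preserving functor categories, then conclude by the unstructured Yoneda lemma \cite[Lemma 5.5.2.1]{LurieHTT}. The difference is in how the first step is handled. The paper simply cites \cite[Corollary 1.4.2.23]{LurieHA} for the stable case and \cite[Corollary 2.10]{GGN} for the additive case, whereas you unwind these references: you identify $\Fun^\lex(\Dd^{op},\Sp)$ with $\Sp(\Fun^\lex(\Dd^{op},\s))$ and show directly that $\Fun^\lex(\Dd^{op},\s)$ is already stable by observing that its loop functor is precomposition with $\Omega_{\Dd^{op}}$, which is invertible; the additive case is handled analogously via $\Sp_{\geq0}\simeq\Grp_{\e_\infty}(\s)$ and the observation that product-preserving functors out of an additive category automatically take values in grouplike $\e_\infty$-monoids. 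Your version is longer but more self-contained, and it makes visible the actual mechanism (stability/additivity of $\Dd$ forcing the functor category to be stable/additive), which the paper's citation-heavy proof hides. Both routes are sound; the paper's is shorter at the cost of opacity. One small point worth being careful about when filling in details: to identify $\Fun^\lex(\Dd^{op},\Sp)$ with the stabilization, one uses that limits are computed pointwise, that $\Fun^\lex(\Dd^{op},-)$ preserves limits of $\infty$-categories, and that left exactness of a $\Sp$-valued functor is detected jointly by the towers $\Omega^\infty\Sigma^n$; and in the pointedness step one should note that lex functors to $\s$ automatically land in $\s_*$ because $\Dd^{op}$ has a zero object, so $\Omega_\s\circ F$ makes sense.
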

\begin{proof}
The first part follows from the fact that the maps
\[ \xymatrix@R=.3cm{\Fun^\lex(\Dd,\Sp) \ar[r]^-{\Omega^\infty} & \Fun^\lex(\Dd,\s) \\ 
		\Fun^\Pi(\Dd,\Sp_{\geq0}) \ar[r]^-{\Omega^\infty} & \Fun^\Pi(\Dd,\s) }\]
are equivalences, which follows from \cite[Corollary 1.4.2.23]{LurieHA} in the stable case and from \cite[Corollary 2.10]{GGN} in the additive case. Notice that there is an equivalence $\Grp_{\e_\infty}(\s) \simeq \Sp_{\geq 0}$.
For the second part we recall from \cite[Lemma 5.5.2.1]{LurieHTT} that for all $G \in \Fun(\Dd,\s)$, there is an equivalence
\[\xymatrix{ \Map_{\Fun(\Dd,\s)}(\underline{X},G) \ar[r] & GX }\]
which by the above equivalences finish the proof of the lemma.
\end{proof}

\begin{Rmk}
In an informal way this encodes that a stable (additive) $\infty$-category is \emph{enriched} in the stable $\infty$-category of spectra (the additive $\infty$-category of connective spectra).
\end{Rmk}

Since we know that both $K$ and $L$-theory can be viewed as functors $\N\cc \to \Sp$ we want to argue that we can view them as functors on $\KK_\infty$.

\begin{Prop}\label{K-theory-spectra}
The following statements hold true.
\begin{enumerate}
\item[(1)] The corepresented functor $\map_{\KK_\infty}(\C,-)\colon \KK_\infty \to \Sp$ is equivalent to $K$-theory,
\item[(2)] The functor $L \colon \N\cc \to \Sp$ factors through $\KK_\infty$.
\end{enumerate}
\end{Prop}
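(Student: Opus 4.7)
My plan is to treat the two parts in the natural order (2) then (1), since (2) is essentially a direct application of the universal property of localization. For part (2), Proposition \ref{localization} says that a functor $\N\cc \to \Sp$ factors essentially uniquely through $\KK_\infty = \N\cc[W^{-1}]$ if and only if it sends $\KK$-equivalences to equivalences; by Corollary \ref{L-factors-through-KK}, $L$-theory has exactly this property. The same argument initiates part (1): a $\KK$-equivalence $f\colon A \to B$ induces isomorphisms $K_n(f) \cong K_0(\Sigma^n f)$ for all $n \in \Z$ by Theorem \ref{KK-category}(4) together with Bott periodicity (using that $\Sigma^n$ is an equivalence on $\KK$), hence $Kf$ is an equivalence of spectra and we obtain a factorization $K\colon \KK_\infty \to \Sp$.

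To apply \cref{Lemma:Yoneda} I next show that $K\colon \KK_\infty \to \Sp$ is left exact. Recall from the proof of \cref{KK-stable} that finite limits in $\KK_\infty$ are modeled by homotopy pullbacks in the fibration category $\cc$ (whose fibrations are the Schochet fibrations). The excision property of topological $K$-theory recalled in \cref{K-theory} says precisely that $K$ sends short exact sequences whose epimorphism is a Schochet fibration to fiber sequences of spectra, which after this translation yields left exactness on $\KK_\infty$. With $K \in \Fun^\lex(\KK_\infty, \Sp)$, the spectral Yoneda lemma gives
\[ \pi_0 \Map_{\Fun^\lex(\KK_\infty,\Sp)}\bigl(\map_{\KK_\infty}(\C,-),\, K\bigr) \cong \pi_0 \Omega^\infty K\C = K_0(\C) \cong \Z, \]
and I pick the natural transformation $\eta$ corresponding to $1 \in \Z$.

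Finally I must verify that $\eta_A\colon \map_{\KK_\infty}(\C, A) \to KA$ is an equivalence of spectra for every $A \in \cc$, which reduces to checking that $\pi_n \eta_A$ is an isomorphism for all $n$. Using that the homotopy category of $\KK_\infty$ is $\KK$ (\cref{stableKKcategory}), the source is $\KK(\C, \Sigma^n A)$, while the target $K_n(A) \cong K_0(\Sigma^n A)$ is identified with the same group by \cref{KK-category}(4); unravelling naturality pins $\pi_n \eta_A$ down as this very isomorphism. The main technical obstacle I anticipate is the left exactness step: the excision statement from \cref{K-theory} is phrased for short exact sequences in $\cc$, so I must carefully transfer it through the Dwyer-Kan localization and the fibration-category presentation of limits in $\KK_\infty$. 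Once this translation is in place, the remaining identification of $\eta$ with an equivalence is a formal consequence of the Yoneda lemma.
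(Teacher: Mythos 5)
Your proposal is correct and follows essentially the same route as the paper: establish that $K$ and $L$ factor through $\KK_\infty$, show $K \in \Fun^\lex(\KK_\infty,\Sp)$, apply \cref{Lemma:Yoneda} to identify $\pi_0$ of the mapping space with $K_0(\C)\cong\Z$, take the transformation corresponding to $1$, and propagate the $\pi_0$-isomorphism to all $\pi_n$ by exactness. One small notational caution: in $\KK_\infty$ the $C^*$-algebraic suspension $S$ plays the role of $\Omega$ (so $\pi_n\map_{\KK_\infty}(\C,A)\cong\KK(\C,S^nA)=\KK(\C,\Omega^nA)$), and writing $\Sigma^n A$ for the stable shift would reverse the sign of $n$; the paper consistently uses $S^nA$ here.
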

\begin{proof}
We prove the first statement first. Obviously the functor 
\[ K\colon \N\cc \to \Sp \]
sends $\KK$-equivalences to equivalences. Thus by the universal property of the localization functor $\N\cc \to \KK_\infty$ it follows that $K$-theory factors through $\KK_\infty$. We want to argue that $K$-theory can be viewed as an object of $\Fun^\lex(\KK_\infty,\Sp)$. This is because a functor preserves finite limits if and only if it preserves pullbacks and the terminal object, see \cite[Corollary 4.4.2.5]{LurieHTT}. Clearly $K$-theory preserves the terminal object and pullbacks because it preserves fiber sequences.
Thus from \cref{Lemma:Yoneda} we obtain an equivalence
\[ \Map_{\Fun(\KK_\infty,\Sp)}(\map_{\KK_\infty}(\C,-),K) \simeq \Omega^\infty (K\C).\]
We may thus consider the commutative diagram
\[ \xymatrix{\pi_0\left(\Map_{\Fun(\KK_\infty,\Sp)}(\map_{\KK_\infty}(\C,-),K) \right)\ar[r] \ar[d]_{\pi_0}& \pi_0\left(\Omega^\infty (K\C) \right) \ar[d]^-{\pi_0} \\ \Hom_{\Fun(\KK,\Ab)}(\KK(\C,-),K_0) \ar[r] & K_0(\C)}\]
where both horizontal arrows are isomorphisms by the corresponding Yoneda lemma.
We have already argued that the element $1 \in K_0(\C)$ comes from some isomorphism between the corepresented functor and the $K_0$-theory functor on $\KK$. By the fact that the right vertical arrow and top horizontal arrow are isomorphisms it follows that there exists a transformation $\eta\colon \map_{\KK_\infty}(\C,-) \to K$ that induces an isomorphism
\[ \eta_A \colon \pi_0(\map_{\KK_\infty}(\C,A)) \xrightarrow{\cong} \pi_0(KA) \]
for all $C^*$-algebras $A$. Since $\eta$ is a transformation between exact functors it follows that the diagram
\[ \xymatrix{\pi_n(\map_{\KK_\infty}(\C,A)) \ar[r] \ar[d]_\cong & \pi_n(KA) \ar[d]^\cong \\ \pi_0(\map_{\KK_\infty}(\C,S^n A)) \ar[r]_-\cong & \pi_0(K(S^n A))}\]
commutes. The lower horizontal map is an isomorphism by the previous argument, and both vertical maps are isomorphisms since the functors are exact. Thus $\eta$ is an equivalence as claimed.

In order to prove $(2)$ we need to see that $L$-theory sends $\KK$-equivalences to weak equivalences of spectra. This was established in \cref{L-factors-through-KK}.
\end{proof}

%%%%%%%%%%%%%%%%%%%%%%%%%%%%%%%%%%%%%%%%%%%%%%%%%%%%%%%%%%%%%%

\subsection{The construction of the natural map}\label{construction}

We are now in the position to prove the following theorem.
\begin{Thm}\label{space of transformations}
For every $n \in \Z$ there exists a natural transformation $\tau(n) \colon k \to \ell$, unique up to homotopy, characterized by the property that $\tau(n)_\C \colon \pi_0(ku) \to \pi_0(\ell\C)$ is given by multiplication by $n$. More precisely the map
\[ \xymatrix@R=.3cm{\pi_0\big( \Map_{\Fun(\N\cc,\Sp_{\geq 0})}(k,\ell) \big) \ar[r] & \Z \\ [\eta] \ar@{|->}[r] & \pi_0(\eta_\C)(1)}\]
is a bijection.
\end{Thm}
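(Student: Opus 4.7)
The plan is to reduce the computation of $\Map_{\Fun(\N\cc, \Sp_{\geq 0})}(k, \ell)$ to a Yoneda-type identification on the stable $\infty$-category $\KK_\infty$. First, I would observe that both $k = \tau_{\geq 0} \circ K$ and $\ell = \tau_{\geq 0} \circ L$ factor through $\KK_\infty$, by \cref{K-theory-spectra} together with \cref{L-factors-through-KK}. Hence, by the universal property of Dwyer--Kan localizations (\cref{localization}), the restriction map
\[ \Map_{\Fun(\KK_\infty, \Sp_{\geq 0})}(k, \ell) \xrightarrow{\simeq} \Map_{\Fun(\N\cc, \Sp_{\geq 0})}(k, \ell) \]
is an equivalence, so it suffices to compute the space on the left.

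Next, by \cref{K-theory-spectra}(1) we have $K \simeq \map_{\KK_\infty}(\C,-)$, and therefore $k \simeq \tau_{\geq 0}\map_{\KK_\infty}(\C,-)$. Since $\KK_\infty$ is stable and hence additive (\cref{KK-stable}), and since $\tau_{\geq 0}$ is right adjoint to the inclusion $\Sp_{\geq 0} \hookrightarrow \Sp$, this identifies $k$ with the image $\underline{\C}$ of $\C$ under the additive Yoneda refinement appearing in the second half of \cref{Lemma:Yoneda}. Provided $\ell \in \Fun^\Pi(\KK_\infty, \Sp_{\geq 0})$ -- which I would verify using \cref{properties of L-theory}(3) in the unital case, together with the definition of $L$ on non-unital $C^*$-algebras via a split fiber sequence and the fact that $\tau_{\geq 0}$ preserves limits -- \cref{Lemma:Yoneda} yields
\[ \Map_{\Fun(\KK_\infty, \Sp_{\geq 0})}(k, \ell) \simeq \Omega^\infty \ell(\C). \]
Taking $\pi_0$ then gives $\pi_0 \Omega^\infty \ell(\C) = \pi_0 L\C = L_0(\C) \cong \Z$, where the final isomorphism is the $n=0$ case of \cref{K groups isomorphic to L groups} combined with $K_0(\C) \cong \Z$.

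Finally, I would verify that the resulting bijection agrees with the evaluation map $[\eta] \mapsto \pi_0(\eta_\C)(1)$ from the statement. Tracing through the Yoneda identification of \cref{Lemma:Yoneda}, the composite $\pi_0 \Map(\underline{\C}, \ell) \to L_0(\C)$ sends $[\eta]$ to the image under $\pi_0(\eta_\C)$ of the class $[\id_\C] \in \pi_0 \map_{\KK_\infty}(\C,\C) = K_0(\C)$, and this class corresponds to $1 \in \Z$ under the standard isomorphism $K_0(\C) \cong \Z$. The main technical obstacle I anticipate is verifying that $\ell$ is product-preserving on $\KK_\infty$: this is immediate for the unital periodic version, but requires some care for the non-unital connective cover, which is essentially why the paper postpones the detailed treatment to \cref{non unital L-theory}. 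Everything else is a formal consequence of the universal properties of $\KK_\infty$ and of the additive Yoneda embedding.
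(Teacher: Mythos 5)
Your overall argument matches the paper's exactly: factor both functors through $\KK_\infty$ using Dwyer--Kan localization and $\KK$-invariance, identify $k$ with the additive Yoneda image $\underline{\C}$ of the tensor unit, apply Lemma~\ref{Lemma:Yoneda} to obtain $\Map_{\Fun(\KK_\infty,\Sp_{\geq 0})}(k,\ell) \simeq \Omega^\infty(\ell\C)$, and trace the resulting bijection on $\pi_0$ back to evaluation at $1 \in K_0(\C) \cong \pi_0(\ell\C)$. This is precisely how the paper proceeds.

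There is, however, a real gap in your proposed verification that $L$ (and hence $\ell$) preserves finite products on non-unital $C^*$-algebras, which is the prerequisite for applying Lemma~\ref{Lemma:Yoneda}. You suggest deducing it from the unital case together with ``the definition of $L$ on non-unital $C^*$-algebras via a split fiber sequence and the fact that $\tau_{\geq 0}$ preserves limits.'' This reasoning fails: for non-unital $A,B$, the unitalization $(A\times B)^+$ is \emph{not} $A^+\times B^+$; it is the pullback of $A^+\times B^+\to\C\times\C$ along the diagonal $\C\to\C\times\C$. So the fiber sequence defining $L(A\times B)=\fib(L((A\times B)^+)\to L\C)$ does not line up termwise with the one computing $LA\oplus LB=\fib(L(A^+\times B^+)\to L(\C\times\C))$, and no formal manipulation of the definition gives the equivalence. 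What is actually required is the split-exactness of $L$-theory (Corollary~\ref{split exact L-theory}), applied to that pullback square with its multiplicative section $\C\times\C\to A^+\times B^+$; this is exactly what Corollary~\ref{L-theory and products} does. Split-exactness in turn rests on the excision theorem computing the $L$-theoretic Mayer--Vietoris defect as a $C_2$-Tate construction --- a substantive input, not a definitional one --- and it is proved in Section~4.1, not in the appendix as you indicate (the appendix concerns the lax symmetric monoidal extension, which is a separate issue). With this step supplied, the rest of your argument is correct.
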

\begin{proof}

We have seen that $L$-theory factors over $\KK_\infty$ and thus want to appeal to the Yoneda lemma  to calculate transformations from $K$-theory to $L$-theory. 
Thus to obtain a natural transformation between $K$- and $L$-theory (viewed as connective spectra-valued functors) we need that the functor $\ell\colon \KK_\infty \to \Sp_{\geq 0}$ commutes with finite products, see \cref{Lemma:Yoneda}. 
Recall form \cref{localization} that 
\[\Fun(\KK_\infty,\Sp) \to \Fun(\N\cc,\Sp)\] 
is fully faithful. Since the map $\N\cc \to \KK_\infty$ preserves products also $\Fun^\Pi(\KK_\infty,\Sp)$ is a full subcategory of $\Fun^\Pi(\N\cc,\Sp)$. Thus it suffices to see that $L\colon \N\cc \to \Sp$ preserves finite products.
Recall that this is well known for unital $C^*$-algebras. In \cref{L-theory and products} we show that it also holds for non unital $C^*$-algebras. 

Hence we have the following chain of equivalences
\[\xymatrix{\Map_{\Fun(\cc,\Sp_{\geq 0})}(k,\ell) & \Map_{\Fun(\KK_\infty,\Sp_{\geq 0})}(k,\ell) \ar[l]_-\simeq \ar[r]^-\simeq & \Omega^\infty(\ell\C) }\]
where the first equivalence follows from \cref{localization} and the second equivalence is precisely \cref{Lemma:Yoneda}.
We get that
\[ \pi_0\big(\Map_{\Fun(\KK_\infty,\Sp_{\geq 0})}(k,\ell)\big) \xrightarrow{\cong} \pi_0(\ell\C) \cong \Z.\]
Furthermore the diagram
\[\xymatrix{\pi_0\big(\Map_{\Fun(\KK_\infty,\Sp_{\geq 0})}(k,\ell)\big) \ar[r] \ar[d] & \pi_0(\ell\C) \ar[d] \ar[r] & \Z \ar[d] \\
		\Hom_{\Fun(\KK,\Ab)}(K_0,L_0) \ar[r] & L_0(\C) \ar[r] & \Z }\]
commutes. This proves the theorem.
\end{proof}
We remark that the proof implies that
\[\tau = \tau(1) \in \Map_{\Fun(\KK_\infty,\Sp_{\geq 0})}(k,\ell),\]
is a transformation whose effect on $\pi_0$ is the transformation described in \cref{K groups isomorphic to L groups}. 

%%%%%%%%%%%%%%%%%%%%%%%%%%%%%%%%%%%%%%%%%%%%%%%%%%%%%%%%%%%%%%

\subsection{Multiplicative properties}\label{Multiplicative properties}

In this section we want to investigate the multiplicative properties of the two functors $K$- and $L$-theory and how the natural map between them respects multiplicative structures. 
We recall the definition of a symmetric monoidal structure on an $\infty$-category, see \cite[chapter 2]{LurieHA}.
\begin{Def}
A symmetric monoidal $\infty$-category is a coCartesian fibration $\c^\otimes \xrightarrow{p} \Nfin$ such that the Segal maps induce equivalences
\[\xymatrix{ \c^\otimes_{\langle n \rangle} \ar[r]^-{\simeq} & \prod\limits_n \c^\otimes_{\langle 1 \rangle} }\]
for all $\langle n \rangle \in \Nfin$. The fiber $\c^\otimes_{\langle 1 \rangle}$ over $\langle 1 \rangle$ will be denoted by $\c$ and referred to as the underlying $\infty$-category.
For two symmetric monoidal $\infty$-categories $\c^\otimes \to \Nfin $ and $\Dd^\otimes \to \Nfin$ we write $\Fun_\otimes(\c,\Dd)$ for the $\infty$-category of symmetric monoidal functors from $\c$ to $\Dd$. Those are just functors $\c^\otimes \to \Dd^\otimes$ over $\Nfin$ such that all coCartesian lifts are carried to coCartesian lifts.
We write $\Fun_\lax(\c,\Dd)$ for the $\infty$-category of lax symmetric monoidal functors from $\c$ to $\Dd$. Those are the functors $\c^\otimes \to \Dd^\otimes$ over $\Nfin$ such that coCartesian lifts of inert morphisms in $\Nfin$ are carried to coCartesian lifts. 
\end{Def}

As discussed in \cref{K-theory} the category $\cc$ admits a symmetric monoidal structure given by the maximal tensor product of $C^*$-algebras. It follows that the $\infty$-category $\N\cc$ admits a symmetric monoidal structure in the above sense, cf.\ \cite[Remark 2.0.0.6]{LurieHA}. We want to argue that this extends to a symmetric monoidal structure on $\KK_\infty$. For this we invoke the general theory of \cite[Section 3]{Hinich}. For convenience we recall the setup. 
\begin{Def}\label{SM localization}
Let $\c^\otimes \xrightarrow{p} \Nfin$ be a symmetric monoidal $\infty$-category. Suppose that $W \subseteq \c$ is a collection of morphisms in the underlying $\infty$-category $\c$. We define a collection $W^\otimes \subseteq \c^\otimes$ of morphisms as follows. We say $f$ is in $W^\otimes$ if
\begin{enumerate}
\item we have that $p(f) = \id_{\langle n \rangle}$ for some $\langle n \rangle \in \Nfin$ and
\item under the equivalence $\c^\otimes_{\langle n \rangle} \simeq \prod\limits_n \c$ the morphism $f$ corresponds to a tuple $(f_1,\dots,f_n)$ such that all $f_i$ belong to $W$.
\end{enumerate}
We say that the collection $W$ of morphisms in $\c$ is compatible with the monoidal structure if the tensor bifunctor (which is essentially uniquely determined by $\c^\otimes \to \Nfin$)
\[\otimes\colon \c \times \c \to \c \]
preserves morphisms in $W$ in both variables separately.
\end{Def}

\begin{Rmk}
Recall that the tensor bifunctor is given by 
\[ \mu_!\colon \c^2 \simeq \c^\otimes_{\langle 2 \rangle} \to \c^\otimes_{\langle 1 \rangle} = \c  \]
where $\mu\colon \langle 2 \rangle \to \langle 1 \rangle$ is the unique active morphism. 
Now suppose that $W$ is a collection of morphisms compatible with the monoidal structure in $\c$ so that $\mu_!$ preserves morphisms in $W^\otimes$. Then it follows formally that the same is true for any active morphism: if $\alpha \colon \langle n \rangle \to \langle m \rangle$ is active, then the induced functor
\[ \alpha_! \colon \c^\otimes_{\langle n \rangle} \to \c^\otimes_{\langle m \rangle} \]
preserves morphisms in $W^\otimes$.
We notice that for an inert morphism $\beta \colon \langle k \rangle \to \langle l \rangle \in \Nfin$ the induced functor
\[ \beta_! \colon \c^\otimes_{\langle k \rangle} \to \c^\otimes_{\langle l \rangle} \]
preserves morphisms in $W^\otimes$ by definition of $W^\otimes$. Thus $W$ is compatible with the monoidal structure in the sense of \cref{SM localization} if and only if for \emph{every} morphism $\alpha \in \Nfin$ the induced functor $\alpha_!$ preserves morphisms in $W^\otimes$.
\end{Rmk}

Let $\c^\otimes \to \Nfin$ be a symmetric monoidal $\infty$-category and let $W \subseteq \c$ be a collection of morphisms in the underlying $\infty$-category which is compatible with the monoidal structure in the sense of 
\cref{SM localization}. 
We will denote the $\infty$-category $\c^\otimes[(W^\otimes)^{-1}]$ by $\c[W^{-1}]^\otimes$.  By definition it comes with a canonical map $\c[W^{-1}]^\otimes \to \Nfin$. The following result is due to Hinich \cite{Hinich}, more specifically  see Proposition 3.2.2 and the remark after Definition 3.3.1 therein.

\begin{Prop}\label{localizing symmetric monoidal categories}
$\c[W^{-1}]^\otimes \to \Nfin$ is a symmetric monoidal $\infty$-category and the localization map  $i \colon\c^\otimes \to \c[W^{-1}]^\otimes$ is symmetric monoidal. Moreover the underlying $\infty$-category of  $\c[W^{-1}]^\otimes$ is equivalent to $\c[W^{-1}]$. 
Given any symmetric monoidal $\infty$-category $\Dd^\otimes \to \Nfin$ we have that the restriction map
\[ \Fun_\otimes(\c[W^{-1}],\Dd) \xrightarrow{i^*} \Fun_\otimes^W(\c,\Dd) \]
is an equivalence. Here the superscript $W$ refers to functors whose induced functor on underlying $\infty$-categories sends $W$ to equivalences. Similarly we also have that
\[ \Fun_\lax(\c[W^{-1}],\Dd) \xrightarrow{i^*} \Fun_\lax^W(\c,\Dd) \]
is an equivalence.
\end{Prop}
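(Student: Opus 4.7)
The plan is to construct $\c[W^{-1}]^\otimes$ directly as the Dwyer--Kan localization of $\c^\otimes$ at $W^\otimes$, and then verify the three required structural facts: (i) the induced map to $\Nfin$ is a coCartesian fibration, (ii) its fibers satisfy the Segal condition and the fiber over $\langle 1 \rangle$ is $\c[W^{-1}]$, and (iii) it has the stated mapping-space universal property. Since every morphism in $W^\otimes$ projects to an identity in $\Nfin$, the composite $\c^\otimes \to \Nfin$ automatically factors through $\c^\otimes[(W^\otimes)^{-1}]$ by the universal property of localization, yielding a functor $\c[W^{-1}]^\otimes \to \Nfin$.

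First I would show this functor is a coCartesian fibration. The key input is the remark following \cref{SM localization}: compatibility of $W$ with the monoidal structure implies that for \emph{every} morphism $\alpha \colon \langle n \rangle \to \langle m \rangle$ in $\Nfin$, the pushforward $\alpha_! \colon \c^\otimes_{\langle n\rangle} \to \c^\otimes_{\langle m\rangle}$ preserves $W^\otimes$. This compatibility is exactly what allows the coCartesian lifts of $p$ to descend to coCartesian lifts in the localization; technically one verifies this using the cartesian model structure on marked simplicial sets, taking a fibrant replacement of $(\c^\otimes, W^\otimes)$ relative to $\Nfin$. Using that Dwyer--Kan localization commutes with finite products (immediate from the universal property), the fiber over $\langle n \rangle$ identifies as
\[ \c[W^{-1}]^\otimes_{\langle n\rangle} \;\simeq\; \bigl(\textstyle\prod_n \c\bigr)[W^{-1}] \;\simeq\; \prod_n \c[W^{-1}], \]
so the Segal condition holds and the underlying $\infty$-category is $\c[W^{-1}]$. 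The map $i \colon \c^\otimes \to \c[W^{-1}]^\otimes$ preserves coCartesian edges by construction (coCartesian edges for $p$ are carried to coCartesian edges for the localized fibration), so $i$ is symmetric monoidal.

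For the universal property, a (lax) symmetric monoidal functor $\c[W^{-1}] \to \Dd$ is by definition a functor $\c[W^{-1}]^\otimes \to \Dd^\otimes$ over $\Nfin$ preserving (inert) coCartesian edges. Restricting along $i$ gives the map in question. For fully-faithfulness and essential surjectivity I would invoke \cref{localization} applied to the total spaces: any functor $\c^\otimes \to \Dd^\otimes$ sending $W^\otimes$ to equivalences factors essentially uniquely through $\c[W^{-1}]^\otimes$, and the conditions of lying over $\Nfin$ and preserving (inert) coCartesian edges are visibly preserved under this factorization. This identifies $\Fun_\otimes(\c[W^{-1}],\Dd)$ and $\Fun_\lax(\c[W^{-1}],\Dd)$ with the indicated full subcategories of $\Fun_\otimes(\c,\Dd)$ and $\Fun_\lax(\c,\Dd)$.

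The main obstacle is the first step: a priori the Dwyer--Kan localization of a coCartesian fibration need not be a coCartesian fibration, and one cannot simply localize fiber-by-fiber and glue. The compatibility condition in \cref{SM localization} is exactly the hypothesis that makes the model-categorical fibrant replacement in marked simplicial sets remain a coCartesian fibration, and verifying this carefully is the content of Hinich's Proposition 3.2.2. I would therefore defer the detailed check to Hinich's argument rather than reprove it, and concentrate on deducing the cleaner $\infty$-categorical consequences stated above.
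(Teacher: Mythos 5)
Your proposal takes the same route as the paper, which offers no independent argument and simply cites Hinich's Proposition 3.2.2 together with the remark after his Definition 3.3.1. You correctly locate the hard technical core — that the localized total space remains a coCartesian fibration over $\Nfin$ and that its fibers identify with localizations of fibers, a point which is not merely "localization commutes with products" as your intermediate chain of equivalences might suggest, but which you do flag correctly in your final paragraph — and defer it to Hinich, filling in the surrounding structural deductions.
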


We apply this construction to the category $\cc$ with the maximal tensor product as symmetric monoidal structure and obtain
\begin{Cor}\label{KK symmetric monoidal}
There is a symmetric monoidal structure $\KK^\otimes_\infty$ on $\KK_\infty$ such that the localization map $i\colon\N\cc \to \KK_\infty$ admits a symmetric monoidal refinement. Moreover, for any other symmetric monoidal $\infty$-category $\Dd^\otimes \to \Nfin$ we have that the functor
\[ \Fun_\lax(\KK_\infty,\Dd) \xrightarrow{i^*} \Fun_\lax^W(\N\cc,\Dd) \]
is an equivalence where the superscript $W$ refers to functors whose underlying functor sends $\KK$-equivalences to equivalences.
\end{Cor}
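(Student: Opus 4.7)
The plan is to apply Proposition \ref{localizing symmetric monoidal categories} directly to the symmetric monoidal $\infty$-category $\N\cc^\otimes \to \Nfin$ (induced from the symmetric monoidal structure on the $1$-category $\cc$ given by the maximal tensor product, cf. \cite[Remark 2.0.0.6]{LurieHA}) together with the collection $W$ of $\KK$-equivalences. Once the hypothesis of the proposition is checked, the conclusion of the corollary is read off without further work: we simply set $\KK_\infty^\otimes := \N\cc^\otimes[(W^\otimes)^{-1}]$, at which point the proposition produces a symmetric monoidal refinement of the localization $i\colon \N\cc \to \KK_\infty$ and the universal property
\[ \Fun_\lax(\KK_\infty,\Dd) \xrightarrow{i^*} \Fun_\lax^W(\N\cc,\Dd). \]

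Thus the only real content is to verify that $W$ is compatible with the monoidal structure in the sense of Definition \ref{SM localization}, i.e. that for every $B \in \cc$ the tensor product functor $-\otimes B\colon \N\cc \to \N\cc$ preserves $\KK$-equivalences. This is where I would spend essentially all the effort, but it follows directly from the existing structure on the classical $\KK$-category: by Theorem \ref{KK-category}(3), $\KK$ carries a symmetric monoidal structure whose tensor product is the maximal tensor product of $C^*$-algebras. In particular, for each $B$ the composite
\[ \cc \xrightarrow{-\otimes B} \cc \longrightarrow \KK \]
agrees with the composite $\cc \to \KK \xrightarrow{-\otimes [B]} \KK$, so it sends $\KK$-equivalences to isomorphisms in $\KK$, and hence $-\otimes B$ sends $\KK$-equivalences in $\cc$ to $\KK$-equivalences. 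By the remark after Definition \ref{SM localization} this is equivalent to the compatibility condition for the full symmetric monoidal structure.

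The hard part, if any, is conceptual rather than technical: one must trust that Hinich's localization theorem applies verbatim here (which it does, since the compatibility condition is exactly its input). No extra hypotheses on $\cc$ or on $W$ are needed, and no independent check of the universal property is required; both assertions of the corollary follow formally from Proposition \ref{localizing symmetric monoidal categories} once compatibility has been verified as above.
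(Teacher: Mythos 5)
Your proposal is correct and follows the same approach the paper takes: the corollary is obtained by direct application of Hinich's localization result (\cref{localizing symmetric monoidal categories}) to $\N\cc$ with the maximal tensor product and the class $W$ of $\KK$-equivalences. The paper in fact states the corollary with only the phrase ``We apply this construction to the category $\cc$ \dots'' and does not explicitly verify that $W$ is compatible with the monoidal structure in the sense of \cref{SM localization}; you correctly identify that this is the one nontrivial hypothesis, and your argument for it --- that $-\otimes B$ descends to an endofunctor of $\KK$ because the localization $\cc \to \KK$ is monoidal for the maximal tensor product (\cref{KK-category}(3)), hence sends $\KK$-equivalences to $\KK$-equivalences --- is exactly the right justification and fills in a step the paper leaves implicit.
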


Recall from \cite[Definition 4.1]{Nikolaus} that a stable and symmetric monoidal $\infty$-category is called stably symmetric monoidal if the tensor bifunctor is exact in both variables.
\begin{Lemma}\label{Lemmastably}
The symmetric monoidal $\infty$-category $\KK^\otimes_\infty$ is stably symmetric monoidal.
\end{Lemma}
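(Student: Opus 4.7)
The underlying $\infty$-category $\KK_\infty$ is stable by \cref{KK-stable}, and the symmetric monoidal structure was constructed in \cref{KK symmetric monoidal}. To conclude that $\KK_\infty^\otimes$ is stably symmetric monoidal, it therefore remains to verify that the tensor bifunctor $\otimes \colon \KK_\infty \times \KK_\infty \to \KK_\infty$ is exact in each variable separately. By symmetry, I would fix $A \in \cc$ and aim to show that the induced functor $A \otimes -\colon \KK_\infty \to \KK_\infty$ is exact; since both source and target are stable, exactness is equivalent to preservation of fiber sequences.

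The plan is to reduce the question to the level of $C^*$-algebras. Recall from the proof of \cref{KK-stable} that $\cc$ carries a fibration category structure whose weak equivalences are the $\KK$-equivalences and whose fibrations are the Schochet fibrations. Under the Dwyer--Kan localization $\N\cc \to \KK_\infty$ any fiber sequence in $\KK_\infty$ is, up to equivalence, the image of a short exact sequence
\[ 0 \to J \to B \to B/J \to 0 \]
of $C^*$-algebras in which the surjection is a Schochet fibration, since these are precisely the fibration sequences of the fibration category $\cc$, as recorded in \cref{KK-category}(3). Hence it suffices to show that $A \otimes -\colon \cc \to \cc$ carries such a sequence to another sequence of the same form.

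But this is exactly what the preliminaries deliver: \cref{prop tensor exact} asserts that the resulting sequence
\[ 0 \to A \otimes J \to A \otimes B \to A \otimes (B/J) \to 0 \]
is again exact, and the remark following it records that the surjection $A \otimes B \to A \otimes (B/J)$ is again a Schochet fibration. Thus $A \otimes -$ sends fiber sequences to fiber sequences, which finishes the proof. The main subtlety does not lie in the $C^*$-algebraic input, which is already in place, but rather in the identification of fiber sequences in the $\infty$-categorical localization $\KK_\infty$ with homotopy fibration sequences in the fibration category $\cc$; this however proceeds exactly as in the proof of \cref{KK-stable}, via the comparison between (co)fibration categories and their associated $\infty$-categories.
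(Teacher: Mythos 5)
Your proof is correct and follows the same route as the paper: reduce to preservation of fiber sequences, identify those in $\KK_\infty$ with short exact sequences of $C^*$-algebras whose surjection is a Schochet fibration via the fibration category structure, and then invoke \cref{prop tensor exact} together with the remark that Schochet fibrations are preserved under tensoring. The only cosmetic difference is that the paper also verifies preservation of finite sums separately, whereas you correctly observe that for a functor between stable $\infty$-categories, preserving fiber sequences (and the zero object) already implies exactness.
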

\begin{proof}
It suffices to prove that the tensor bifunctor preserves finite sums and fiber sequences separately in both variables. For finite sums this is clear since tensoring with a sum of algebras is a sum of the tensor products. 
To get the result for fiber sequences we use that the existence of the fibration category structure on $\cc$ whose fibrations are the Schochet fibrations implies that 
every fiber sequence in $\KK_\infty$ is equivalent to one induced by a short exact sequence $I \to A \xrightarrow{p} B$ where $p$ is Schochet fibration. Thus if we tensor with some $D$ we obtain a sequence 
which is still short exact by
 \cref{prop tensor exact} and the map $D \otimes p$ is still a Schochet fibration by the remark after \cref{prop tensor exact}. Thus it follows that it is still a fiber sequence in $\KK_\infty$.
\end{proof}

\begin{Cor}\label{corlaxstructure}
$K$-theory $K \in \Fun(\KK_\infty,\Sp)$ has a canonical lax symmetric monoidal refinement. With this structure it is initial in $\Fun^{\lex}_\lax(\KK_\infty,\Sp)$. Similarly, connective $K$-theory is initial in  $\Fun^\Pi_{\lax}(\KK_\infty,\Sp_{\geq 0})$.
\end{Cor}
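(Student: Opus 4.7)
The plan is to derive this as a direct application of the general fact (due to Nikolaus, Corollary 6.8 of the paper already cited in the excerpt) that in a stably symmetric monoidal $\infty$-category, the functor corepresented by the tensor unit carries a canonical lax symmetric monoidal structure and is initial in the $\infty$-category of exact lax symmetric monoidal functors to $\Sp$. The hypotheses have just been verified: by Corollary \ref{KK symmetric monoidal} the $\infty$-category $\KK_\infty^\otimes$ is symmetric monoidal with tensor unit $\C$; by Lemma \ref{Lemmastably} it is stably symmetric monoidal; and by Proposition \ref{K-theory-spectra}(1) we have a canonical equivalence $K \simeq \map_{\KK_\infty}(\C,-)$ of functors $\KK_\infty \to \Sp$. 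Transporting the lax symmetric monoidal refinement along this equivalence gives $K$ a canonical such structure, and initiality in $\Fun^{\lex}_\lax(\KK_\infty,\Sp)$ is then immediate from the cited corollary.

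To make contact with the statement of initiality, I would unwind what it says: for any exact lax symmetric monoidal $F \colon \KK_\infty \to \Sp$, the space of lax symmetric monoidal transformations $K \to F$ is contractible. The underlying non-multiplicative mapping space is $\Omega^\infty(F\C)$ by Lemma \ref{Lemma:Yoneda}, and the lax symmetric monoidal constraint fixes a point in it, namely the image of the unit map $\S \to F\C$; the content of Cor.~6.8 is that this suffices to kill the remaining homotopical data. In practice, the only thing one needs to verify besides the hypotheses is that the lax symmetric monoidal refinement obtained by transport along $K \simeq \map_{\KK_\infty}(\C,-)$ is the one whose underlying transformation corresponds, via Lemma \ref{Lemma:Yoneda}, to the identity of $K\C$, which is automatic from the construction in \cite{Nikolaus}.

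For the connective statement I would run the additive analog. The $\infty$-category $\KK_\infty$ is additive because it is stable (Proposition \ref{KK-stable}), and the tensor bifunctor preserves finite products in each variable because it is exact in each variable (Lemma \ref{Lemmastably}); thus $\KK_\infty^\otimes$ is an \emph{additively} symmetric monoidal $\infty$-category in the sense of \cite{Nikolaus}. The additive version of Corollary 6.8 then produces an essentially unique lax symmetric monoidal refinement of the connective-spectrum-valued corepresented functor $\underline{\C} \colon \KK_\infty \to \Sp_{\geq 0}$, which by the additive part of Lemma \ref{Lemma:Yoneda} is canonically identified with $k = \tau_{\geq 0} \circ K$, and makes it initial in $\Fun^\Pi_\lax(\KK_\infty,\Sp_{\geq 0})$. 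As a sanity check, one can observe that the lax symmetric monoidal connective cover functor $\tau_{\geq 0} \colon \Sp \to \Sp_{\geq 0}$ (right adjoint to the symmetric monoidal inclusion) transports the lax structure on $K$ of the stable case to a lax structure on $k$, and this must agree with the one produced by the additive corollary by essential uniqueness.

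The main obstacle in this plan is not the existence of the lax monoidal refinement, which is formal once Lemma \ref{Lemmastably} is in hand, but rather the \emph{initiality}: one needs the full strength of the multiplicative Yoneda lemma from \cite{Nikolaus}, ensuring that the space of lax monoidal transformations, and not merely its $\pi_0$, is contractible. All other steps are bookkeeping, combining Corollary \ref{KK symmetric monoidal}, Lemma \ref{Lemmastably}, Proposition \ref{K-theory-spectra}(1), and Lemma \ref{Lemma:Yoneda}.
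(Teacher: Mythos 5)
Your proposal is correct and follows exactly the paper's route: cite Nikolaus's Corollary 6.8, with the hypotheses verified by \cref{Lemmastably} and \cref{K-theory-spectra}(1). You also spell out the additive case for connective $K$-theory more explicitly than the paper's one-line proof does, but this is the same application of \cite{Nikolaus} that the paper has in mind.
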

\begin{proof}
This is \cite[Corollary 6.8]{Nikolaus} using that $K$-theory is equivalent to the functor corepresented by the tensor unit object $\C$, recall \cref{K-theory-spectra}, and the fact that $\KK_\infty^\otimes$ is stably symmetric monoidal as shown in \cref{Lemmastably}.
\end{proof}

The following statement has been studied in different contexts, see \cite{Lurie} and \cite{Laures} in the case of unital rings. We refer to \cref{non unital L-theory}, more specifically \cref{L theory non unital monoidal}, where we argue how to deduce the following proposition from the the results of \cite{Laures2}.
\begin{Prop}\label{propositionlaxL}
The functors $L\in \Fun(\N\cc,\Sp)$ and $\ell\in \Fun(\N\cc,\Sp_{\geq 0})$ admit lax symmetric monoidal refinements.
\end{Prop}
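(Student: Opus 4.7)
The plan is to extend the lax symmetric monoidal structure that is already available on $L\colon \N\ccu \to \Sp$ (via item (5) of Proposition \ref{properties of L-theory}, coming from \cite{Laures, Laures2}) to the non-unital category $\N\cc$.

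First I would set up the framework: by \cref{lemma unitalization}, the unitalization adjunction identifies $\cc$ with the category $(\ccu)_{/\C}$ of augmented unital $C^*$-algebras, via $A \mapsto (\pi_A\colon A^+ \to \C)$. Under this identification the (maximal) non-unital tensor product on $\cc$ corresponds to the smash product $\wedge$ on augmented algebras, characterized by $A^+ \wedge B^+ \simeq (A \otimes_\cc B)^+$; this is witnessed by the natural unital ring map $(A \otimes_\cc B)^+ \hookrightarrow A^+ \otimes_\ccu B^+$ sending $a \otimes b \mapsto a \otimes b$ and $1 \mapsto 1 \otimes 1$. It is \emph{not} the tensor product inherited from $\ccu$, whose augmentation ideal would be $A \oplus B \oplus (A \otimes_\cc B)$.

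Next I would lift $L\colon \N\ccu \to \Sp$ to a lax symmetric monoidal functor valued in augmented $L\C$-module spectra $\mathrm{Mod}_{L\C}^{/L\C}$: every $L(A^+)$ is canonically an $L\C$-module via the unit $\C \to A^+$, and any augmentation $A \to \C$ induces an $L\C$-linear map $L(A) \to L\C$. Since the functor ``fiber of augmentation'' is a symmetric monoidal equivalence from $(\mathrm{Mod}_{L\C}^{/L\C}, \wedge_{L\C})$ to $(\mathrm{Mod}_{L\C}, \otimes_{L\C})$, composing yields the sought lax symmetric monoidal functor $L\colon \N\cc \to \Sp$. On objects it is the formula $A \mapsto \fib(L(A^+) \to L\C)$. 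The connective version $\ell$ is then obtained by postcomposition with the lax symmetric monoidal connective cover $\tau_{\geq 0}\colon \Sp \to \Sp_{\geq 0}$.

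The main obstacle is producing these structures coherently at the $\infty$-categorical level, not merely in the homotopy category; in particular, the identification of the two symmetric monoidal structures above has to be promoted to an equivalence of symmetric monoidal $\infty$-categories rather than a pointwise coincidence. As in the remark following Theorem \ref{KK-category}, the strategy is to realize everything first at the level of $1$-categories using the symmetric-spectrum model of $L$-theory from \cite{Laures2}, and then to pass to $\infty$-categories via the nerve. This is the approach carried out in \cref{non unital L-theory}.
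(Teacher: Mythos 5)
Your route is genuinely different from the paper's. The appendix (\cref{non unital L-theory}) proves, purely via Day convolution on functor categories and symmetric monoidal localizations, that restriction along the cofree functor $\c \to \c_{/\emptyset}$ induces an equivalence $\Fun_\lax^\calK(\c_{/\emptyset},\Dd) \simeq \Fun_\lax^\calK(\c,\Dd)$, and then simply applies this with $\c = \N\ccu$ to extend $L$ to $\N\cc$ essentially uniquely. You instead want to factor $L$ restricted to augmented unital $C^*$-algebras through augmented $L\C$-modules and then apply a ``fiber of augmentation'' equivalence. The object-level formula you land on is correct, and the idea of using the slice over the initial/unit object is morally the same as the paper's.

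However, there is a real gap at the heart of your argument. What you are given is that $L \colon \N\ccu \to \Sp$ is lax symmetric monoidal \emph{for the ordinary tensor product} $\otimes_\ccu$. To make your composition $(\ccu)_{/\C} \to \mathrm{Mod}_{L\C}^{/L\C} \to \mathrm{Mod}_{L\C}$ lax symmetric monoidal for the non-unital tensor product (equivalently, the smash product on the slice), you must first produce a lax symmetric monoidal structure on the lift $L_{aug}$ \emph{with respect to the smash monoidal structures on both the source slice $(\ccu)_{/\C}$ and the target slice $\mathrm{Mod}_{L\C}^{/L\C}$}. This does not follow formally from the $\otimes_\ccu$-lax structure: the natural map goes $(A\otimes_\cc B)^+ \to A^+ \otimes_\ccu B^+$, so applying $L$ and the given lax structure maps produces arrows pointing the \emph{wrong} way for a lax structure on the smash. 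Passing from the inherited tensor on the slice to the smash is precisely the content of \cref{symmetric structure on slice}, and your proposal silently assumes it. Likewise, the assertion that the fiber functor is a symmetric monoidal equivalence $(\mathrm{Mod}_{L\C}^{/L\C},\wedge_{L\C}) \xrightarrow{\simeq} (\mathrm{Mod}_{L\C},\otimes_{L\C})$ is itself a claim of the same type (it is essentially \cref{bladddd}/\cref{blaeeeee} applied to $\mathrm{Mod}_{L\C}$) and would need its own proof. Finally, your concluding remark that the appendix ``realizes everything first at the level of $1$-categories'' is inaccurate: the appendix works directly with $\infty$-categorical Day convolution; the $1$-categorical symmetric-spectrum model from \cite{Laures2} is only used to get the starting lax symmetric monoidal structure on $L|_{\N\ccu}$, not to prove the extension result.
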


\begin{Rmk}
In the appendix we prove the statement about the non-connective $L$-theory functor. Since the functor $\tau_{\geq 0} \colon \Sp \to \Sp_{\geq 0}$ is itself lax symmetric monoidal this implies the second part of \cref{propositionlaxL}.
\end{Rmk}

\begin{Cor}
The functors $L \in \Fun(\KK_\infty,\Sp)$ and $\ell \in \Fun(\KK_\infty,\Sp_{\geq 0})$ admit lax symmetric monoidal refinements.
\end{Cor}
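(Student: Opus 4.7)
The plan is to derive this as a direct consequence of the universal property of the symmetric monoidal localization established in Corollary \ref{KK symmetric monoidal} applied to the already existing lax symmetric monoidal refinements on $\N\cc$.

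First, I would invoke Proposition \ref{propositionlaxL} to obtain lax symmetric monoidal refinements of $L\colon \N\cc \to \Sp$ and $\ell\colon \N\cc \to \Sp_{\geq 0}$, where $\Sp$ and $\Sp_{\geq 0}$ carry their standard symmetric monoidal structures coming from the smash product (noting that the connective cover is closed under smash products, so $\Sp_{\geq 0}^\otimes$ is indeed a symmetric monoidal $\infty$-category, and $\tau_{\geq 0}\colon \Sp \to \Sp_{\geq 0}$ is lax symmetric monoidal).

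Second, I would observe that both $L$ and $\ell$ send $\KK$-equivalences in $\N\cc$ to equivalences in the respective target categories. For $L$ this is precisely the content of Corollary \ref{L-factors-through-KK}; for $\ell = \tau_{\geq 0} \circ L$ this follows by postcomposition with the connective cover functor, which preserves equivalences of spectra.

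Third, I would apply Corollary \ref{KK symmetric monoidal}, which asserts that the restriction along the symmetric monoidal localization $i\colon \N\cc^\otimes \to \KK_\infty^\otimes$ induces an equivalence
\[ \Fun_\lax(\KK_\infty, \Dd) \xrightarrow{\;i^*\;} \Fun_\lax^W(\N\cc, \Dd) \]
for any symmetric monoidal $\infty$-category $\Dd^\otimes$. Specializing to $\Dd = \Sp$ and $\Dd = \Sp_{\geq 0}$, this equivalence implies that the lax symmetric monoidal refinements of $L$ and $\ell$ on $\N\cc$ (which by the second step land in the full subcategory of $W$-local functors) lift essentially uniquely through $i$ to lax symmetric monoidal functors on $\KK_\infty$. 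Moreover, the underlying functors of these lifts agree with the already existing factorizations of $L$ and $\ell$ through $\KK_\infty$ constructed in Proposition \ref{K-theory-spectra} and the preceding discussion.

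Since every step is a direct appeal to a result proven earlier in the paper, there is no substantive obstacle; the argument is essentially bookkeeping assembling the universal property of the monoidal localization with the multiplicative and $\KK$-invariance properties of $L$-theory already established.
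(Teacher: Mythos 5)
Your proof is correct and follows essentially the same route as the paper: the paper simply cites \cref{propositionlaxL} and \cref{localizing symmetric monoidal categories}, and your citation of \cref{KK symmetric monoidal} is just the specialization of the latter to $\cc$. The extra details you supply (the lax symmetric monoidal structure of $\tau_{\geq 0}$, $\KK$-invariance of $L$ and $\ell$) make explicit what the paper leaves implicit, but the argument is the same.
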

\begin{proof}
This follows by \cref{propositionlaxL} and \cref{localizing symmetric monoidal categories}.
\end{proof}

\begin{Thm}\label{lax-refinement}
There is an essentially unique multiplicative transformation $\tau \colon k \to \ell$. Its underlying transformation is equivalent to $\tau(1)$ as constructed in \cref{space of transformations}. \end{Thm}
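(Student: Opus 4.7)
The plan is to deduce the theorem from the initiality of connective $K$-theory in the $\infty$-category of product-preserving lax symmetric monoidal functors from $\KK_\infty$ to connective spectra, which was established in \cref{corlaxstructure}. Once $\ell$ is identified as an object of $\Fun^\Pi_\lax(\KK_\infty,\Sp_{\geq 0})$, initiality of $k$ directly produces an essentially unique lax symmetric monoidal transformation $\tau \colon k \to \ell$, which gives both parts of the statement in one stroke.

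My first task is therefore to verify that $\ell$ defines such an object, and this I would do by combining three previously established inputs. First, $\ell$ factors through $\KK_\infty$: this is \cref{L-factors-through-KK} combined with the universal property of the localization (\cref{localization}). Second, the resulting functor on $\KK_\infty$ preserves finite products, because $\ell\colon \N\cc \to \Sp_{\geq 0}$ does, as already used in the proof of \cref{space of transformations} (with the extension to the non-unital setting supplied by \cref{L-theory and products}), and because the localization functor $\N\cc \to \KK_\infty$ is itself product-preserving. Third, to transport the lax symmetric monoidal refinement of \cref{propositionlaxL} from $\N\cc$ to $\KK_\infty$, I would apply \cref{localizing symmetric monoidal categories} to $\N\cc^\otimes$ with $W$ the class of $\KK$-equivalences; the required compatibility of $W$ with the symmetric monoidal structure is precisely the statement that the maximal tensor product of $C^*$-algebras preserves $\KK$-equivalences in each variable separately, which in turn is guaranteed by the stably symmetric monoidal property of $\KK_\infty^\otimes$ shown in \cref{Lemmastably}.

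Granting these three inputs, $\ell$ becomes an object of $\Fun^\Pi_\lax(\KK_\infty,\Sp_{\geq 0})$, and initiality of $k$ yields the essentially unique $\tau$. It remains to identify its underlying transformation with $\tau(1)$. For this I would exploit that any lax symmetric monoidal transformation is automatically unit-preserving: $k\C$ and $\ell\C$ are connective $\e_\infty$-ring spectra (being the images of the tensor unit $\C \in \KK_\infty^\otimes$ under lax symmetric monoidal functors) and $\tau_\C$ is compatible with the unit maps from the sphere spectrum. Consequently $\pi_0(\tau_\C)$ is a ring homomorphism $\Z \to \Z$ sending $1$ to $1$, and by the bijection in \cref{space of transformations} the underlying transformation must coincide, up to homotopy, with $\tau(1)$.

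The main conceptual obstacle in this plan is the symmetric monoidal localization step in the second paragraph: one must verify that the class of $\KK$-equivalences is compatible with the monoidal structure on $\N\cc$ in the sense of \cref{SM localization}, so that \cref{localizing symmetric monoidal categories} applies and the lax symmetric monoidal structure descends to $\KK_\infty$. Fortunately this reduces cleanly to the fact recorded in \cref{Lemmastably} that the tensor bifunctor on $\KK_\infty$ is exact in each variable, so the heavy lifting has already been done.
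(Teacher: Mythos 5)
Your overall plan is the same as the paper's: both identify $\ell$ as an object of $\Fun^\Pi_\lax(\KK_\infty,\Sp_{\geq 0})$ and then invoke the initiality of $k$ from \cref{corlaxstructure} to produce the essentially unique $\tau$, and both pin down the underlying transformation via the fact that a lax symmetric monoidal transformation induces a ring map $\pi_0(ku)\to\pi_0(\ell\C)$, forcing $n=1$ in the parametrization of \cref{space of transformations}. The three inputs you assemble (factorization through $\KK_\infty$, preservation of products, lax symmetric monoidal refinement) are exactly those the paper has already packaged into earlier corollaries.

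One step in your justification is logically circular, though easily repaired. You argue that the compatibility of the class $W$ of $\KK$-equivalences with the monoidal structure on $\N\cc$ (needed as a hypothesis for \cref{localizing symmetric monoidal categories}) follows from \cref{Lemmastably}. But \cref{Lemmastably} is a statement about the symmetric monoidal $\infty$-category $\KK_\infty^\otimes$, whose very construction in \cref{KK symmetric monoidal} already presupposes that $W$ is compatible with the monoidal structure on $\N\cc$. So you are using a consequence of the thing you are trying to verify. The correct order of reasoning is: first one verifies directly that $-\otimes D$ sends $\KK$-equivalences to $\KK$-equivalences for any $C^*$-algebra $D$ (a standard feature of Kasparov theory, via external products), which licenses \cref{KK symmetric monoidal}, and only then can one speak of $\KK_\infty^\otimes$ at all, let alone prove \cref{Lemmastably}. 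Alternatively, and more simply, you could have skipped this verification entirely and just cited the corollary immediately following \cref{propositionlaxL}, which already records that $\ell \in \Fun(\KK_\infty,\Sp_{\geq 0})$ admits a lax symmetric monoidal refinement. With that repair, your proof coincides with the one in the paper.
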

\begin{proof}
Since $k \in \Fun_\lax(\KK_\infty,\Sp_{\geq 0})$ is initial we see that there is an essentially unique element 
\[ \tau \in \Map_{\Fun_\lax(\KK_\infty,\Sp_{\geq 0})}(k,\ell).\]
The canonical map
\[ \xymatrix{\Map_{\Fun_\lax(\KK_\infty,\Sp_{\geq 0})}(k,\ell) \ar[r] & \Map_{\Fun(\KK_\infty,\Sp_{\geq 0})}(k,\ell)}\]
takes this to the natural transformation $\tau(n)$ for some $n$, see \cref{space of transformations}. The number $n$ is determined by its effect on
\[ \Z \cong \pi_0(ku) \longrightarrow \pi_0(\ell \C)\cong \Z,\]
namely $\tau(n)$ induces multiplication with $n$.
Since $\tau$ is multiplicative it follows that $\tau(n)$ is a map of rings and thus that $n=1$.
\end{proof}

\begin{Cor}\label{equivalence for C}
The map $\tau_\C \colon ku \to \ell\C$ is a map of $\e_\infty$-ring spectra.
\end{Cor}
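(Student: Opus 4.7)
The plan is to observe that the corollary follows formally from \cref{lax-refinement} together with the standard fact that evaluation at the tensor unit of a symmetric monoidal $\infty$-category refines to a functor from lax symmetric monoidal functors into commutative algebras in the target. In other words, there is essentially nothing new to prove: all the work has already been absorbed in establishing the existence of the lax symmetric monoidal refinement $\tau \colon k \to \ell$.

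First, I would recall from \cite{LurieHA} that for any symmetric monoidal $\infty$-category $\mathcal{E}^\otimes \to \Nfin$, the tensor unit $\mathbf{1} \in \mathcal{E}$ carries a canonical structure of an $\e_\infty$-algebra object. Given a lax symmetric monoidal functor $F \colon \mathcal{E} \to \Sp_{\geq 0}$, the lax structure map $\mathbb{S} \to F(\mathbf{1})$ together with the constraint maps $F(\mathbf{1}) \otimes F(\mathbf{1}) \to F(\mathbf{1} \otimes \mathbf{1}) \simeq F(\mathbf{1})$ promote $F(\mathbf{1})$ to a connective $\e_\infty$-ring spectrum. This assignment is natural in $F$, so it refines to a functor
\[ \mathrm{ev}_{\mathbf{1}} \colon \Fun_\lax(\mathcal{E}, \Sp_{\geq 0}) \longrightarrow \mathrm{CAlg}(\Sp_{\geq 0}). \]
In our situation $\mathcal{E}^\otimes = \KK_\infty^\otimes$ with tensor unit $\mathbf{1} = \C$.

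Second, I would simply apply $\mathrm{ev}_\C$ to the lax symmetric monoidal transformation $\tau \colon k \to \ell$ produced in \cref{lax-refinement}. Since a morphism in $\Fun_\lax(\KK_\infty, \Sp_{\geq 0})$ is sent to a morphism of commutative algebras, the result is a map of connective $\e_\infty$-ring spectra
\[ \tau_\C \colon k(\C) = ku \longrightarrow \ell(\C) = \ell\C, \]
whose underlying map of spectra agrees with the map $\tau_\C$ of \cref{space of transformations} by construction. This is the claim of the corollary.

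There is no substantive obstacle here. The only thing one might wish to spell out in more detail is the construction of $\mathrm{ev}_{\mathbf{1}}$ as a functor, but this is entirely formal and handled in \cite{LurieHA} via the straightforward unstraightening of the sections $\Nfin \to \mathcal{E}^\otimes$ picking out the unit; everything else is naturality. Consequently the proof consists only of invoking \cref{lax-refinement} and applying the evaluation functor at $\C$.
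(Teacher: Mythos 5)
Your argument is correct and is exactly the (unwritten) proof the paper intends: the corollary is stated without proof precisely because it is the immediate formal consequence of \cref{lax-refinement} that you describe, namely that evaluation at the tensor unit carries $\Fun_\lax(\KK_\infty,\Sp_{\geq 0})$ to $\mathrm{CAlg}(\Sp_{\geq 0})$ and hence carries the lax symmetric monoidal transformation $\tau\colon k\to \ell$ to a map of $\e_\infty$-ring spectra $ku \to \ell\C$.
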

\begin{Rmk}
Composing the map $\tau_\C$ with the complex orientation $\MU \to ku$ yields a complex orientation of $\ell\C$ and thus also of $L\C$. This orientation is very different from the one obtained through the Sullivan-Ranicki orientation $\MU \to \MSO \to L\Z \to L\C$. Using the computation of the map $\tau_\C$ in the next section one can deduce that the formal group law  of the new orientation is given by $x + y + 2xy$. The other orientation leads to a more complicated formal group law (e.g. it is a non finite power series) but of course the two are related by a change of coordinates. It is immediately clear that the formal group law $x + y + 2xy$ is isomorphic to the multiplicative formal group law when localized away from 2. Writing down the logarithm shows that after localizing at 2 this group law is isomorphic to the additive formal group law. This algebraic observation is compatible with the fact that at the prime 2 the spectrum $L\C$ is a generalized Eilenberg-Mac Lane spectrum and away from 2 it is equivalent to $K$-theory. Notice that this implies in particular that the spectrum $L\C$ is not Landweber exact.
\end{Rmk}

%%%%%%%%%%%%%%%%%%%%%%%%%%%%%%%%%%%%%%%%%%%%%%%%%%%%%%%%%%%%%%
%%%%%%%%%%%%%%%%%%%%%%%%%%%%%%%%%%%%%%%%%%%%%%%%%%%%%%%%%%%%%%

\section{The effect on homotopy groups}\label{effect on homotopy}

The main goal of this section is to calculate the effect of the multiplicative transformation $\tau\colon k \to \ell$ constructed in \cref{lax-refinement}  on homotopy groups. If $M$ is an abelian group and $n \geq 0$ we denote by $M_n$ the subgroup of $n$-torsion elements. We will prove the following theorem.

\begin{Thm}\label{effect-tau-on-homotopy}
For $i \in \{0,1\}$, all $k \geq 0$, and all $A \in \cc$ there is an exact sequence
\[\xymatrix{0 \ar[r] & \pi_{2k+i}(kA)_{2^k} \ar[r] & \pi_{2k+i}(kA) \ar[r]^-{\tau_A} & \pi_{2k+i}(\ell A) \ar[r] & \frac{\pi_{2k+i}(\ell A)}{2^k\cdot\pi_{2k+i}(\ell A)} \ar[r] & 0}.\]
\end{Thm}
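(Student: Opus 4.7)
The proof will go in three phases: a base case ($k=0$), a key computation of $\tau_\C$ on $\pi_2$, and a bootstrap to arbitrary $k$ using multiplicativity and Bott periodicity.

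For the base case I would show that $\tau_A\colon \pi_i(kA) \to \pi_i(\ell A)$ is an isomorphism for $i \in \{0,1\}$ and every $A \in \cc$. For $i = 0$ this is essentially \cref{tau second time}: the Yoneda argument in $\KK$ identifies $\pi_0(\tau_A)$ with the Karoubi isomorphism. For $i = 1$ I would use the excision results of the preceding subsection applied to the cone sequence $0 \to SA \to CA \to A \to 0$, yielding natural identifications $\pi_0(k(SA)) \cong \pi_1(kA)$ and $\pi_0(\ell(SA)) \cong \pi_1(\ell A)$ under which $\tau_A$ on $\pi_1$ corresponds to $\tau_{SA}$ on $\pi_0$; the $i=0$ case applied to $SA$ then closes the argument.

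The core of the proof is to establish that $\tau_\C(\beta) = \pm 2 b$ in $\pi_2(\ell\C) \cong \Z\{b\}$, where $\beta \in \pi_2(ku)$ is the Bott class and $b$ is the generator coming from property (2) of \cref{properties of L-theory}. Since $\tau_\C$ is an $\e_\infty$-ring map by \cref{equivalence for C} and $\pi_*(ku) = \Z[\beta]$, this single integer determines $\tau_\C$ on $\pi_*$ entirely. By a Yoneda argument as in \cref{tau second time}, $\tau_\C(\beta)$ equals the image of the Bott projection class in $K_0(S^2\C)$ under the Karoubi iso $\pi_0(\tau_{S^2\C})$, read off via the excision identification $L_0(S^2\C) \cong \pi_2(\ell\C)$. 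The subtle point is that the composite
\[
L\C \xrightarrow{L(\beta)} L(S^2\C) \xrightarrow{\simeq} \Omega^2 L\C \xrightarrow{b} L\C
\]
equals multiplication by $\pm 2$ on $L\C$; equivalently, the Karoubi image of the Bott projection corresponds to $\pm 2 b$ in the excision coordinates on $L_0(S^2\C)$, not to $\pm b$. I expect this factor of $2$ to be the main obstacle of the proof, and would pin it down by an explicit analysis of the hermitian form associated to the Bott projection together with the description of $b$ via a square root of $-1$ as in property (2) of \cref{properties of L-theory}. As a consistency check, the resulting complex orientation $\MU \to ku \xrightarrow{\tau_\C} \ell\C$ has formal group law $x+y+2xy$, precisely as noted in the remark after \cref{equivalence for C}.

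For the bootstrap, the lax symmetric monoidal refinement of $\tau$ (\cref{lax-refinement}) makes each $\tau_A$ a map of $ku$-modules (via $\tau_\C$), so for $y \in \pi_i(kA)$,
\[
\tau_A(\beta^k \cdot y) \;=\; \tau_\C(\beta)^k \cdot \tau_A(y) \;=\; (\pm 2)^k\, b^k \cdot \tau_A(y).
\]
Multiplication by $\beta^k$ induces a Bott-periodicity isomorphism $\pi_i(kA) \xrightarrow{\cong} \pi_{2k+i}(kA)$ for $i \geq 0$, and similarly $b^k$ induces $\pi_i(\ell A) \xrightarrow{\cong} \pi_{2k+i}(\ell A)$ by the $2$-periodicity of $LA$ for $A \in \cc$. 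Combined with the base case, $\tau_A$ on $\pi_{2k+i}$ is, after these identifications, multiplication by $\pm 2^k$ on $\pi_{2k+i}(\ell A)$. The kernel of multiplication by $2^k$ on an abelian group is the $2^k$-torsion subgroup and the cokernel is the quotient by $2^k$, yielding the desired four-term exact sequence.
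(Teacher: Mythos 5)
Your overall strategy is exactly the paper's: show $\tau_A$ is an isomorphism on $\pi_0$ and $\pi_1$, pin down $\tau_\C(\beta) = 2b$ on $\pi_2$, then bootstrap to all degrees using that $\tau$ is a $ku$-module map and both sides are periodic. The base case and the bootstrap are correct and essentially verbatim what the paper does (\cref{effect-on-pi_0-and-pi_1} and the proof of \cref{effect-tau-on-homotopy}). The issue is with the middle step, which you correctly flag as the crux but do not actually resolve.

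Two concrete problems. First, you write the excision comparison map as $L(S^2\C) \xrightarrow{\simeq} \Omega^2 L\C$, but this map is \emph{not} an equivalence: by \cref{cofiber of theta} the cofiber of $\Sigma L(S\C) \to L\C$ is $(H K_0(\C))^{tC_2} = (H\Z)^{tC_2}$, which is nontrivial. That failure of excision is precisely the source of the factor of $2$, so labelling the map as an equivalence erases the very phenomenon you are trying to capture. (Your phrase ``the Karoubi image of the Bott projection corresponds to $\pm 2b$'' is also misleading: Karoubi's map $K_0(S^2\C)\to L_0(S^2\C)$ is an isomorphism and sends the Bott class to a \emph{generator}; the $2$ appears only when you push that generator forward along the excision map $L_1(S\C)\to L_2(\C)$.) Second, you propose to obtain the factor of $2$ by ``an explicit analysis of the hermitian form associated to the Bott projection together with the description of $b$ via a square root of $-1$,'' but you never carry this out, so the core computation is left as a promise. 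The paper instead derives it structurally: \cref{cofiber of theta} gives the fiber sequence $\Sigma L(SA) \to LA \to H(K_0(A))^{tC_2}$ via a Rothenberg-type argument, and the resulting exact sequence at $A = \C$ (\cref{times2}) shows $L_1(S\C) \to L_2(\C)$ is injective with cokernel $\hat H^{-2}(C_2;\Z) \cong \Z/2$, hence is multiplication by $2$ after choosing generators. Plugged into the commutative square of \cref{effect-on-pi_2} this forces $\tau_\C(\beta) = 2b$ without ever having to unwind the Bott projection's hermitian form. So your outline has the right shape, but the crucial input (\cref{cofiber of theta} and \cref{times2}) is missing and replaced by a claim that is in part false as stated and in part left unproved.
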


The other maps $\tau(n): k \to \ell$  constructed in \cref{space of transformations} are given by $n \cdot \tau$, thus it is easy to deduce similar results for those.

The main method for the calculations we need is to understand how close the canonical map
\[ \Theta_A \colon \Sigma L(SA) \to LA \]
is from being an equivalence.

\subsection{Excision in $L$-theory}\label{excision}
Assume we have a diagram of rings
 \[\xymatrix{R \ar[r] \ar[d] & S \ar[d]^{p} \\ T \ar[r]_{q} & U}\]
which is a pullback and in which either $p$ or $q$ is surjective. 
In \cite[Theorem 3.3]{Milnor} it is shown that a pullback square as above  induces a long exact Mayer-Vietoris sequence in algebraic $K$-theory connecting degree $1$ to degree $0$, i.e. that there is an exact sequence
\[ K_1^\a(R) \to K_1^\a(S)\oplus K_1^\a(T) \to K_1^\a(U) \to K_0^\a(R) \to K_0^\a(S)\oplus K_0^\a(T) \to K_0^\a(U) .\]
Using Bass' definition of negative $K$-groups it follows formally that this sequence can be extended to the right by negative $K$-groups, see \cite[III Theorem 4.3]{Weibel}.

A crucial property of $L$-theory is that it does not admit such Mayer-Vietoris sequences in general. But there is a formula calculating the defect. This is the content of the following discussion which in similar forms has been investigated by Ranicki \cite[Chapter 6]{Ranicki} and Weiss-Williams \cite{WW-Duality}. The setup is as follows.
Let 
\[\xymatrix{R \ar[r] \ar[d] & S \ar[d]^p \\ T \ar[r]_q & U}\]
be a pullback square of involutive not necessarily unital rings where again, say, $p$ is surjective. We define the algebraic $K$-theory of a non-unital ring $R$ to be
\[ K_0^\a(R) = \ker\big( K_0^\a(R^+) \to K_0^\a(\Z) \big) \cong \coker \big( K_0^\a(\Z) \to K_0^\a(R^+)\big).\]
and let
\[ X = \coker \big(K_0^\a(S)\oplus K_0^\a(T) \to K_0^\a(U)\big).\]
We can now consider the commutative square of spectra
\[\xymatrix{LR \ar[r] \ar[d] & LS \ar[d] \\ LT \ar[r] & LU }\]
which produces an essentially unique map
\[ \xymatrix{LS \oplus_{LR} LT \ar[r] & LU }\]
where the left hand side denotes the pushout of the above diagram.
\begin{Thm}\label{excision}
In the situation of above, suppose that in addition $2$ is invertible in all rings. Then there is a fiber sequence 
\[\xymatrix{ LS\oplus_{LR} LT \ar[r] & LU \ar[r] & (HX)^{tC_2}}\]
of spectra, where $HX$ denotes the Eilenberg-MacLane spectrum on $X$.
\end{Thm}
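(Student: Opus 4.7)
The plan is to identify the total cofiber of the Milnor square in $L$-theory with $(HX)^{tC_2}$, following the strategies of Ranicki~\cite[Chapter 6]{Ranicki} and Weiss--Williams~\cite{WW-Duality}. First I would reformulate the claim as a total cofiber computation. In the stable $\infty$-category of spectra, the asserted fiber sequence amounts to $\cofib(LS\oplus_{LR}LT\to LU)\simeq(HX)^{tC_2}$, which in turn equals $\cofib(F_{RT}\to F_{SU})$ where $F_{RT}:=\fib(LR\to LT)$ and $F_{SU}:=\fib(LS\to LU)$. Since $p$ is surjective with kernel $I$ and the square is a pullback, the top map $R\to T$ is also surjective with the same kernel $I$; in particular both vertical fibers carry canonical comparison maps from the non-unital $L$-spectrum $LI$ (as defined in \cref{L-theory of CAlg}, extended to involutive rings as in \cref{non unital L-theory}).

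Set $E_{RT}:=\cofib(LI\to F_{RT})$ and $E_{SU}:=\cofib(LI\to F_{SU})$, the two \emph{excision defects}; by the octahedral axiom applied to the composite $LI\to F_{RT}\to F_{SU}$, one has a cofiber sequence $E_{RT}\to E_{SU}\to\cofib(F_{RT}\to F_{SU})$, so the total cofiber of the $L$-theory square reduces to $\cofib(E_{RT}\to E_{SU})$. The key technical input, which goes back to Ranicki's analysis of the projective-class-group obstruction and is given spectrally in \cite{WW-Duality}, identifies the excision defect of a surjection $A\twoheadrightarrow B$ of involutive rings with $2$ invertible as a Tate construction: there is a natural equivalence $E_{AB}\simeq(HY_{AB})^{tC_2}$ for an appropriate $K_0$-theoretic obstruction group $Y_{AB}$ (carrying the $C_2$-action induced by duality). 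With $2$ inverted, symmetric and quadratic $L$-theory agree and can be computed from the involution-induced $C_2$-action on $K$-theory, so the $L$-theoretic excision defect becomes the Tate construction of the corresponding $K$-theoretic excision defect.

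Applying this identification to both rows of the Milnor square and using that the Tate construction $(-)^{tC_2}$ is an exact functor from spectra with $C_2$-action to spectra, one obtains $\cofib(E_{RT}\to E_{SU})\simeq\bigl(\cofib(HY_{RT}\to HY_{SU})\bigr)^{tC_2}$. By the Mayer--Vietoris sequence of \cite{Milnor} extended by Bass to negative $K$-groups (which turns the nonconnective $K$-theory Milnor square into a pullback square of spectra), the comparison $Y_{RT}\to Y_{SU}$ has cokernel precisely $X=\coker\bigl(K_0^\a(S)\oplus K_0^\a(T)\to K_0^\a(U)\bigr)$ and trivial kernel contribution, so that the cofiber is $HX$. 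Hence the total cofiber of the $L$-square is $(HX)^{tC_2}$, as claimed. The hard part will be the second step of the previous paragraph: lifting Ranicki's classical Rothenberg-sequence identification $\hat H^*(C_2;K_0)$ of the excision obstruction to a spectrum-level equivalence with enough naturality, and making precise the $C_2$-equivariant structure inherited from the duality on $K$-theory. This is where the Weiss--Williams framework, together with the $\infty$-categorical exactness of $(-)^{tC_2}$, becomes essential.
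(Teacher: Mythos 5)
Your overall plan — relate the total cofiber of the $L$-theory square to a $C_2$-Tate construction on $K_0$-theoretic data, citing Ranicki's Chapter 6 and the Weiss--Williams framework — points in the same general direction as the paper's proof, but the route you propose has several genuine problems.

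First, there is a desuspension error at the very start. In a stable $\infty$-category, with $F_{RT}=\fib(LR\to LT)$ and $F_{SU}=\fib(LS\to LU)$, one has $\fib = \Omega\circ\cofib$, so
\[
\cofib(F_{RT}\to F_{SU}) \simeq \Omega\,\cofib\bigl(\cofib(LR\to LT)\to\cofib(LS\to LU)\bigr) \simeq \Omega\,\cofib\bigl(LS\oplus_{LR}LT\to LU\bigr),
\]
not $\cofib(LS\oplus_{LR}LT\to LU)$ itself. Since $(HX)^{tC_2}$ is $2$-periodic, a shift by one \emph{does} change the spectrum (it swaps even and odd Tate cohomology), so this error would have to be tracked through your whole argument; as written the final identification lands on $\Sigma(HX)^{tC_2}$ rather than $(HX)^{tC_2}$.

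Second, and more seriously, your ``key technical input'' — that for a surjection $A\twoheadrightarrow B$ of involutive rings with $2$ invertible the excision defect $E_{AB}=\cofib(LI\to\fib(LA\to LB))$ is naturally equivalent to $(HY_{AB})^{tC_2}$ for some explicitly described $K_0$-theoretic group $Y_{AB}$ — is not actually established anywhere in your argument, and you concede as much at the end. But notice that this is essentially an instance of the very statement being proved: with $I=\ker(A\to B)$, the square with corners $I^+, A, \Z, B$ is a Milnor square, and the theorem applied to it (ignoring that $2\notin\Z^\times$) is precisely your excision-defect identification. So either you need an independent, spectrum-level, natural proof of this special case (which you do not supply and which I do not believe is explicitly in \cite{WW-Duality} in the form you need), or the argument is circular. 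The paper avoids this by invoking Ranicki's Theorem 6.3.1 directly on the given square: using controlled $L$-theory, it identifies $LS\oplus_{LR}LT$ with $L^Z(U)$ for $Z=\ker(K_0^\a(U)\to X)$, and then applies the spectral Rothenberg sequence of \cite{Williams2} once to get $\cofib(L^Z U\to LU)\simeq(HX)^{tC_2}$. This sidesteps both the shift bookkeeping and the need to identify two separate excision defects.

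Third, the aside that Bass' extension ``turns the nonconnective $K$-theory Milnor square into a pullback square of spectra'' is false. The Milnor--Bass Mayer--Vietoris sequence for a Milnor square is exact only from $K_1$ downward through the negative $K$-groups; the square of nonconnective $K$-theory spectra is \emph{not} cartesian, as higher $K$-theory famously fails excision. Your argument only needs $\pi_0$-level information about $Y_{RT}\to Y_{SU}$, so this may be fixable, but as stated it is an incorrect justification.

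Finally, you do not address the reduction to the unital case, which the paper handles first (by unitalizing all four rings and comparing the resulting cube of $L$-spectra); Ranicki's Theorem 6.3.1 and the spectral Rothenberg sequence are formulated for unital rings, so some such reduction is necessary before either approach can be run.
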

\begin{Rmk}
First we notice that there is a natural $C_2$-action on the algebraic $K$-theory of an involutive ring. This action is induced by sending a projective left-module $P$ to its dual module, $\Hom_R(P,R)$ which one can view as a left-module via the involution on $R$. Since we declared the above square to be one of involutive rings it follows that there is an induced action of $C_2$ on the group $X$.
The superscript $tC_2$ in $(HX)^{tC_2}$ refers to the Tate construction of this group action. This is a spectrum with the property that the homotopy groups are given by classical Tate cohomology: 
\[ \pi_*((HX)^{tC_2}) \cong \hat{H}^{-*}(C_2;X).\]
\end{Rmk}

\begin{proof}[Proof of \cref{excision}]
First we show how to reduce the theorem to the case where all rings in question are unital. For that we look at the diagram obtained by unitalizing all rings. After applying $L$-theory we then obtain a commutative diagram of spectra
\[\xymatrix{ LR \ar[rr] \ar[dd] \ar[dr] & & LR^+ \ar[rr] \ar@{-->}[dd] \ar[dr] & & L\Z \ar@{-->}[dd] \ar[dr] & \\
		& LS \ar[rr] \ar[dd] & & LS^+ \ar[rr] \ar[dd] & & L\Z \ar[dd] \\
		LT \ar@{-->}[rr]  \ar[dr] & & LT^+ \ar@{-->}[rr] \ar@{-->}[dr] & & L\Z \ar@{-->}[dr] & \\
		& LU \ar[rr] & & LU^+ \ar[rr] & & L\Z }\]
Since the horizontal composites are all fiber sequences and the most right vertical square is a pullback it follows that the most left vertical square is a pullback if and only if the middle vertical square is a pullback. More precisely, comparing cofibers carefully one can see that the diagram
\[\xymatrix{LS\oplus_{LR} LT \ar[r] \ar[d] & LS^+\oplus_{LR^+} LT^+ \ar[d] \\ LU \ar[r] & LU^+}\]
is a pullback diagram. Thus if we assume the theorem in the case where all rings are unital we obtain a fiber sequence
\[\xymatrix{ LS\oplus_{LR} LT \ar[r] & LU \ar[r] & (HX)^{tC_2} }\]
where 
\[ X = \coker\big( K_0^\a(S^+) \oplus K_0^\a(T^+) \to K_0^\a(U^+) \big)\]
But by our definition of $K$-groups for non-unital rings it follows that
\[ X \cong \coker\big( K_0^\a(S) \oplus K_0^\a(T) \to K_0^\a(U)\big).\]
Thus it remains to prove the theorem in the case where all rings are unital.

One can then be more specific and even give a concrete description of the pushout $LS\oplus_{LR} LT$ in terms of $L$-theory. For an involutive ring and every $C_2$-invariant subgroup $Z \subseteq K_0^\a(R)$ there is a spectrum $L^Z(R)$ called $L$-theory with control in $Z$. Roughly speaking classes are represented by symmetric Poincar\'e complexes $(C,\varphi)$ whose underlying $K$-theory class $[C] \in K_0^\a(R)$ lies in $Z$, see e.g. \cite{Williams2} for details.
It is proven in \cite[6.3.1]{Ranicki} that in the situation of the theorem, with
\[ Z = \ker\big( K_0^\a(U) \to X \big) \]
there is a pullback diagram
\[ \xymatrix{LR \ar[r] \ar[d] & LS \ar[d] \\ LT \ar[r] & L^ZU }\]
and thus $LS\oplus_{LR} LT \simeq L^ZU$.
For this one needs to show that the diagram
\[\xymatrix{K_0^\a(R) \ar[r] \ar[d] & K_0^\a(S) \ar[d] \\ K_0^\a(T) \ar[r] & Z}\]
is cartesian in the sense of \cite[page 498]{Ranicki}, which is clear from the definition of $Z$. 
Notice that cartesian in Ranicki's sense does not mean that it is a cartesian square in abelian groups but rather that it is a cocartesian square of abelian groups.

We have now seen that the map $LS\oplus_{LR} LT \to LU$ is equivalent to the map
\[\xymatrix{ L^ZU \ar[r] & LU }\]
whose cofiber is equivalent to $H(K_0^\a(U)/Z)^{tC_2} \simeq (HX)^{tC_2}$ by the spectral Rothenberg sequence as described in \cite[page 27]{Williams2}. Thus the theorem follows.
\end{proof}

\begin{Cor}\label{split exact L-theory}
Consider a diagram of involutive rings in which $2$ is invertible
\[\xymatrix{R \ar[r] \ar[d] & S \ar[d] \\ T \ar[r] & U \ar@/_1pc/[u]}\]
which is a pullback and where the right vertical map admits a multiplicative section. Then the associated square of $L$-theory spectra is a pullback square.
\end{Cor}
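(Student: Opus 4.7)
The plan is to deduce the corollary as a direct application of \cref{excision}. In the stable $\infty$-category of spectra a commutative square is cartesian if and only if it is cocartesian, or equivalently, if and only if the canonical comparison map from the pushout $LS\oplus_{LR} LT$ to $LU$ is an equivalence. A multiplicative section $s\colon U\to S$ of $p$ is in particular a ring homomorphism with $p\circ s=\id_U$, which forces $p$ to be a surjective ring homomorphism, so the hypotheses of \cref{excision} are met. We therefore obtain a fiber sequence
\[LS\oplus_{LR}LT \longrightarrow LU \longrightarrow (HX)^{tC_2},\]
with $X=\coker\bigl(K_0^\a(S)\oplus K_0^\a(T)\to K_0^\a(U)\bigr)$, and it suffices to prove that $(HX)^{tC_2}\simeq 0$.

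For this, it is enough to verify that the abelian group $X$ vanishes, since then $HX\simeq 0$ and the Tate construction is trivially zero. But applying the functor $K_0^\a$ to the identity $p\circ s=\id_U$ yields $K_0^\a(p)\circ K_0^\a(s)=\id_{K_0^\a(U)}$, so the homomorphism $K_0^\a(p)\colon K_0^\a(S)\to K_0^\a(U)$ is split surjective. Since this surjection factors through the map $K_0^\a(S)\oplus K_0^\a(T)\to K_0^\a(U)$, the latter is itself surjective, whence $X=0$.

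There is essentially no obstacle here: the entire substance of the corollary is contained in \cref{excision}, and the deduction only requires the elementary observation that a multiplicative (i.e.\ ring-theoretic) section induces a splitting on $K_0^\a$ and thereby kills the obstruction group $X$. Note that we did not need to assume that $s$ preserves the involution, since the vanishing of $X$ as an abelian group automatically trivialises any $C_2$-action on it.
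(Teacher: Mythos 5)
Your proof is correct and follows exactly the same route as the paper: apply \cref{excision}, observe that the multiplicative section makes $K_0^\alpha(S)\to K_0^\alpha(U)$ (split) surjective so that $X=0$, and conclude that the Tate term vanishes. The paper states this more tersely; your added remarks (that $p$ is automatically surjective, and that the section need not respect the involution because $X=0$ as a group) are accurate but not substantively different.
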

\begin{proof}
If the map $p$ admits a multiplicative split it follows that the map $K_0^\a(S) \to K_0^\a(U)$ is surjective. Thus $X = 0$ and then \cref{excision}  directly implies the claim.
\end{proof}

\begin{Cor}\label{L-theory and products}
$L$-theory commutes with products in $\cc$.
\end{Cor}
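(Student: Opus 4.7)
The plan is to reduce to the unital case using unitalization and the split excision result of \cref{split exact L-theory}. For $A, B \in \cc$, I would first establish that the square
\[\xymatrix{(A\times B)^+ \ar[r]\ar[d] & A^+ \ar[d]^{\pi_A} \\ B^+ \ar[r]_-{\pi_B} & \C}\]
is a pullback in $\cc$. This is a direct computation: both the pullback $A^+\times_\C B^+$ and the unitalization $(A\times B)^+$ have underlying set $A\oplus B\oplus\C$, and a side-by-side comparison of the twisted multiplications (as spelled out in \cref{lemma unitalization}) shows they agree as $*$-algebras.

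Since every ring in the square is a $\C$-algebra, $2$ is invertible throughout, and the right vertical map $\pi_A$ admits the unital inclusion $\C\hookrightarrow A^+$ as a multiplicative section. Hence \cref{split exact L-theory} applies and yields a pullback of spectra upon applying $L$-theory. For each $C\in\{A,B,A\times B\}$ the defining fiber sequence $LC\to L(C^+)\to L\C$ is split by the unital section $\C\to C^+$, giving canonical equivalences $L(C^+)\simeq LC\oplus L\C$ under which the projection $L(C^+)\to L\C$ is the projection to the second summand. Substituting these identifications into the $L$-theoretic pullback square, the pullback of the two projections
\[LA\oplus L\C\longrightarrow L\C\longleftarrow LB\oplus L\C\]
works out to $LA\oplus LB\oplus L\C$, with the map to $L\C$ being projection onto the last factor.

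Taking the fiber of this projection then yields
\[L(A\times B)=\fib\bigl(L((A\times B)^+)\to L\C\bigr)\simeq LA\oplus LB\simeq LA\times LB,\]
naturally in $A$ and $B$. The real work has already been done in \cref{excision} and \cref{split exact L-theory}; recognizing the appropriate pullback square of unitalizations and unwinding the definition of $L$-theory for non-unital $C^*$-algebras is then routine, and I do not anticipate any significant obstacle here.
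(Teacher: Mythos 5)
Your proof is correct and takes essentially the same route as the paper: express $(A\times B)^+$ as a pullback of unitalizations satisfying the hypotheses of \cref{split exact L-theory}, and read off the equivalence $L(A\times B)\simeq LA\oplus LB$ from the resulting pullback of $L$-theory spectra. The paper uses the square with vertices $(A\times B)^+$, $A^+\times B^+$, $\C$, $\C\times\C$ and compares vertical fibers (invoking that $L$-theory commutes with finite products of \emph{unital} rings), while your square $(A\times B)^+ = A^+\times_\C B^+$ avoids that invocation by instead unwinding the three unitalization splittings $L(C^+)\simeq LC\oplus L\C$; both variants are sound.
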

\begin{proof}
Recall from \cref{properties of L-theory} that
$L$-theory commutes with products of unital rings and thus with products in $\ccu$. To show the general situation, we consider the diagram
\[\xymatrix{(A\times B)^+ \ar[r] \ar[d] & A^+\times B^+ \ar[d] \\ \C \ar[r]_-\Delta & \C\times \C \ar@/_1pc/[u]}\]
which is a pullback diagram as in \cref{split exact L-theory}. Thus the associated square of $L$-theory spectra is a pullback square and hence the canonical map of vertical fibers
\[\xymatrix{ L(A\times B) \ar[r] & LA \oplus LB }\]
is an equivalence as claimed.
\end{proof}

\begin{Cor}\label{L-theory exact after inverting 2}
Suppose that 
\[\xymatrix{R \ar[r] \ar[d] & S \ar[d]^p \\ T \ar[r]_q & U }\]
is a pullback square of involutive rings where $p$ is surjective. Then the square
\[\xymatrix{LR\adj \ar[r] \ar[d] & LS\adj \ar[d] \\ LT\adj \ar[r] & LU\adj}\]
is a pullback diagram.
\end{Cor}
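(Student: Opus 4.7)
The plan is to invoke the excision theorem \cref{excision} and observe that the error term vanishes after inverting~$2$. First I would apply (a version of) \cref{excision} to the given pullback square to produce a fiber sequence of spectra
\[ LS \oplus_{LR} LT \longrightarrow LU \longrightarrow (HX)^{tC_2}, \]
where $X = \coker\bigl(K_0^\a(S)\oplus K_0^\a(T) \to K_0^\a(U)\bigr)$ carries the induced $C_2$-action coming from duality. As \cref{excision} is stated under the hypothesis that $2$ is invertible in the rings, the first step is to verify that the argument of \cref{excision} goes through without this assumption: the reduction to the unital case using the splittings $R^+ \to \Z$, Ranicki's identification $LS \oplus_{LR} LT \simeq L^Z U$ via controlled $L$-theory, and the identification of the cofiber of $L^Z U \to LU$ with the Tate spectrum $(HX)^{tC_2}$ via the spectral Rothenberg sequence are all valid for arbitrary involutive rings.

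Next I would invert $2$ in the fiber sequence above. Since smashing with $\S\adj$ is exact and preserves fibre sequences, it is enough to show
\[ (HX)^{tC_2}\adj \simeq 0. \]
This follows from the classical fact that for any finite group $G$ and any $G$-module $M$, the order $|G|$ annihilates the Tate cohomology groups $\hat{H}^{*}(G;M) = \pi_{-*}\bigl((HM)^{tG}\bigr)$, proved by the standard transfer argument. Taking $G = C_2$ shows that $(HX)^{tC_2}$ has $2$-torsion homotopy groups, and hence becomes contractible after inverting~$2$.

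Combining these two steps, the canonical map
\[ LS\adj \oplus_{LR\adj} LT\adj \longrightarrow LU\adj \]
is an equivalence, which is precisely the assertion that the square of $2$-inverted $L$-theory spectra is a pullback square.

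The main obstacle is the first step: removing the $2$-invertibility hypothesis from \cref{excision}. If a direct verification that Ranicki's argument and the Rothenberg identification go through in full generality proves subtle, an alternative route would be to exploit that after inverting~$2$ symmetric and quadratic $L$-theory agree, and appeal to an excision statement in quadratic $L$-theory, which is known without restrictions on the ring. Either way, once the fiber sequence is in hand the conclusion is immediate from the vanishing of $(HX)^{tC_2}\adj$.
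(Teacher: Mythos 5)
Your proposal is essentially correct and your ``alternative route'' at the end is precisely the paper's argument. The paper's proof is terse: it observes that $(HX)^{tC_2}\adj \simeq 0$ since $C_2$-Tate cohomology is $2$-torsion, and then removes the $2$-invertibility hypothesis by appealing to the fact that \cref{excision} holds unconditionally in \emph{quadratic} $L$-theory $L^q$, together with the canonical equivalence $L^qR\adj \xrightarrow{\simeq} LR\adj$ for every involutive ring $R$. This is exactly the escape hatch you name in your final paragraph.

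Your primary route, however, is dubious as stated. You assert that ``the reduction to the unital case\ldots, Ranicki's identification $LS \oplus_{LR} LT \simeq L^ZU$ via controlled $L$-theory, and the identification of the cofiber\ldots via the spectral Rothenberg sequence are all valid for arbitrary involutive rings'' in symmetric $L$-theory. This is not something the paper claims or uses, and it is unlikely to be true in that generality: Ranicki's controlled $L$-theory Mayer--Vietoris results and the Rothenberg-type fiber sequence are formulated and proved in quadratic $L$-theory, where the relevant algebraic surgery machinery works unconditionally; the translation to the symmetric theory is what uses $2 \in R^\times$. So the honest statement is that \cref{excision} holds as stated for symmetric $L$-theory only with the $2$-invertibility hypothesis, and unconditionally for $L^q$. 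Since you correctly flag this as the potential obstacle and supply the quadratic-theory alternative, your overall understanding is sound — you should simply promote the ``alternative'' to the main argument and drop the claim that symmetric $L$-theory excision works unconditionally.
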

\begin{proof}
This follows from the fact that Tate cohomology of $C_2$ is $2$-torsion and thus $(HX)^{tC_2}\adj \simeq 0$. The condition that $2$ is invertible in the rings is not needed. The reason is that \cref{excision} holds without the  assumption that $2$ is invertible if we work in quadratic $L$-theory $L^q$. But for every ring $R$ the canonical map 
\[ L^qR\adj \to LR\adj \]
is an equivalence, thus the statement also holds in symmetric $L$-theory after inverting 2.
\end{proof}

Let $A$ be a $C^*$-algebra. We can then consider the diagram
\[\xymatrix{SA \ar[r] \ar[d] & CA \ar[d]^p \\ 0 \ar[r] & A}\]
which is a pullback diagram in which the map $p$ is surjective. The algebra $CA$ is contractible as a $C^*$-algebra and thus $\KK$-equivalent to $0$. Since $L$-theory sends $\KK$-equivalences to equivalences it follows that $L(CA) \simeq 0$. Hence we get that
\[ 0 \oplus_{L(SA)} L(CA) \simeq 0 \oplus_{L(SA)} 0 \simeq \Sigma L(SA)\]
and obtain a canonical map
\[ \xymatrix{\Sigma L(SA) \ar[r]^-{\Theta_A} & LA}.\]

\begin{Prop}\label{cofiber of theta}\label{suspension-iso-L-theory}
Let $A$ be a $C^*$-algebra. Then there is a fiber  sequence
\[\xymatrix{\Sigma L(SA) \ar[r] & LA \ar[r] & H(K_0(A))^{tC_2}} .\]
The associated long exact sequence decomposes into two parts: an exact sequence
\[\xymatrix@C=.7cm{0\ar[r] & \hat{H}^{-2k-1}(C_2;K_0(A)) \ar[r] & \pi_{2k-1}(L(SA)) \ar[r] & \pi_{2k}(LA) \ar[r] & \hat{H}^{-2k}(C_2;K_0(A)) \ar[r] & 0 }\]
and an isomorphism
\[ \xymatrix{\pi_{2k}(L(SA)) \ar[r]^\cong& \pi_{2k+1}(LA) .} \]
\end{Prop}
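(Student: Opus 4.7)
The plan is to derive the fiber sequence as a direct application of \cref{excision} and then extract the homotopy-group statement from the induced long exact sequence. The pullback square displayed just before the statement has its right vertical map surjective, and $2$ is a unit in every $C^*$-algebra since they are all $\C$-algebras. Thus \cref{excision} yields a fiber sequence
\[ L(CA) \oplus_{L(SA)} L(0) \to LA \to (HX)^{tC_2}, \]
with $X = \coker(K_0^{\alg}(CA) \oplus K_0^{\alg}(0) \to K_0^{\alg}(A))$.

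Next I would identify both sides. The cone $CA$ is $\KK$-equivalent to $0$, so $\KK$-invariance (\cref{L-factors-through-KK}) gives $L(CA)\simeq 0$, and $L(0) \simeq 0$ trivially; hence the pushout becomes $0\oplus_{L(SA)} 0 \simeq \Sigma L(SA)$. Similarly $K_0^{\alg}(CA)=0$ by homotopy invariance of $K_0$ for $C^*$-algebras, and $X \cong K_0(A)$ after identifying algebraic and topological $K_0$. This yields the advertised fiber sequence $\Sigma L(SA) \xrightarrow{\Theta_A} LA \to H(K_0(A))^{tC_2}$.

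For the decomposition, writing out the long exact sequence gives
\[\cdots \to \pi_{n-1}(L(SA)) \to \pi_n(LA) \to \hat{H}^{-n}(C_2;K_0(A)) \to \pi_{n-2}(L(SA)) \to \cdots\]
The claim that it splits as described is equivalent, for every $k$, to the vanishing of the two boundary maps $\pi_{2k+1}(LA) \to \hat{H}^{-(2k+1)}(C_2;K_0(A))$ and $\hat{H}^{-2k}(C_2;K_0(A)) \to \pi_{2k-2}(L(SA))$. This is the main obstacle. I would attack it by recognising the composite $LA \to H(K_0(A))^{tC_2}$ as the Rothenberg signature map implicit in the proof of \cref{excision}: on $\pi_{2k}$ it records the underlying projective class of a symmetric form modulo hyperbolics (landing in $\hat{H}^{0}$), and on $\pi_{2k+1}$ it records the analogous invariant of a formation in $\hat{H}^{-1}$. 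Combined with the isomorphism $K_n(A)\cong L_n(A)$ of \cref{K groups isomorphic to L groups} and Karoubi's positive-definite form (which exhibits every $K_0$-class as self-dual and hyperbolic classes as precisely doubled elements), the required vanishings should reduce to direct checks on representatives.
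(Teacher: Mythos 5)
The derivation of the fiber sequence from \cref{excision} is correct and matches the paper's approach; the identification of the pushout as $\Sigma L(SA)$ and of $X$ with $K_0(A)$ is fine.

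The decomposition of the long exact sequence is where the proposal has a genuine gap. You correctly reduce the claim to the vanishing of the two maps $\pi_{2k+1}(LA)\to\hat H^{-(2k+1)}(C_2;K_0(A))$ and $\hat H^{-2k}(C_2;K_0(A))\to\pi_{2k-2}(L(SA))$, but the step where you assert these ``should reduce to direct checks on representatives'' is not a proof. In particular, the fiber sequence and the boundary map it produces are for the \emph{non-unital} algebra $A$, and it is not clear that the composite $LA\to H(K_0(A))^{tC_2}$ can be read off on $\pi_{2k}$ as the naive ``projective class modulo hyperbolics'' invariant without first relating it to the classical Rothenberg sequence. The paper handles this by comparing the fiber sequence for $A$ with the genuine Rothenberg fiber sequence $L^{\langle h\rangle}(A^+)\to LA^+\to H(\widetilde K_0(A^+))^{tC_2}$ for the unitalization, observing that the middle vertical map $LA\to LA^+$ splits (and hence so does the left one), and then invoking \cite[Remark 1.7]{Rosenberg}, where the analogous decomposition is established for the unital Rothenberg sequence using the isomorphism $K_n(A)\cong L_n(A)$ and an analysis of $L^{\langle h\rangle}_0(A^+)$. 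A diagram chase using the splittings then transfers Rosenberg's conclusion to the non-unital sequence. If you want to avoid that citation and argue on representatives directly, you would need to (i) verify that the map $LA\to H(K_0(A))^{tC_2}$ is indeed the one you describe, (ii) prove surjectivity in even degrees via Karoubi's positive-definite form (every class in $\hat H^0$ is hit), and (iii) prove vanishing in odd degrees (the formation discriminant vanishes for $C^*$-algebras). None of these steps is carried out in your proposal, so as written it is incomplete.
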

\begin{proof}
The fiber sequence is a direct consequence of \cref{excision} using that for a $C^*$-algebra $A$ we have $K_0(A) \cong K_0^\a(A)$. To prove the second part we consider the following commutative square
\[
\xymatrix{ LA \ar[r] \ar[d] & H(K_0(A))^{tC_2} \ar[d]^\cong \\
		LA^+ \ar[r] & H(\wt{K}_0(A^+))^{tC_2}}
\]
where $\wt{K}_0(B)$ denotes reduced $K$-theory of a ring $B$. The upper horizontal fiber is  $\Sigma L(SA)$ by the first part. The lower horizontal fiber  
is what is called free $L$-theory of $A^+$ and denoted by $L^{\langle h\rangle}(A^+)$ following conventions from geometric topology. The associated long exact sequence is the classical Rothenberg exact sequence relating free and projective $L$-groups, see e.g. \cite[Proposition 1.10.1]{Ranicki}.
Then we obtain an induced diagram
\begin{align}\label{diagram1}
\xymatrix{\Sigma L(SA) \ar[r] \ar[d] & LA \ar[r] \ar[d] & H(K_0(A))^{tC_2} \ar[d]^\cong \\
		L^{\langle h \rangle}(A^+) \ar[r] & LA^+ \ar[r] & H(\wt{K}_0(A^+))^{tC_2}}
\end{align}
Notice that since the middle vertical map splits it follows that also the most left vertical map splits.
It is proven in \cite[Remark 1.7]{Rosenberg} that the classical Rothenberg sequence decomposes as described in the statement of the proposition.  This proof given in \cite{Rosenberg} relies on the statement that $\pi_0(KA) \cong \pi_0(LA)$, see \cref{K groups isomorphic to L groups}, and a similar analysis of the free $L$-group $L^{\langle h \rangle}_0(A)$ in terms of $K$-groups.
Using that the vertical maps in (\ref{diagram1}) split, a diagram chase then finishes the proof of the proposition.
\end{proof}

\begin{Cor}\label{times2}
For the algebra $A= \C$ we have a short exact sequence
\[\xymatrix{0 \ar[r] & L_1(S\C) \ar[r] & L_2(\C) \ar[r] & \Z/2 \ar[r] & 0}\]
\end{Cor}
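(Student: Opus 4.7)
The plan is to derive this short exact sequence as a direct consequence of the exact sequence from \cref{cofiber of theta} applied to $A = \C$ with $k = 1$. That proposition yields, for every $C^*$-algebra $A$ and every integer $k$, an exact sequence
\[
0 \to \hat{H}^{-2k-1}(C_2;K_0(A)) \to \pi_{2k-1}(L(SA)) \to \pi_{2k}(LA) \to \hat{H}^{-2k}(C_2;K_0(A)) \to 0,
\]
so the task reduces to identifying the two Tate cohomology groups on the outside when $A = \C$ and $k = 1$.

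First I would record that $K_0(\C) \cong \Z$ and identify the $C_2$-action. The action is induced by the involution on $\C$, i.e.\ by sending a finitely generated projective module $P$ to its dual $\Hom_\C(P,\C)$ viewed as a left module via complex conjugation. For the rank-one free module this gives back the rank-one free module, so the $C_2$-action on $K_0(\C) \cong \Z$ is trivial.

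Next I would invoke the standard computation of the Tate cohomology of $C_2$ with trivial coefficients in $\Z$: it is $2$-periodic, with $\hat{H}^{\mathrm{even}}(C_2;\Z) \cong \Z/2$ and $\hat{H}^{\mathrm{odd}}(C_2;\Z) = 0$. Specialising to $k=1$ therefore gives $\hat{H}^{-3}(C_2;\Z) = 0$ and $\hat{H}^{-2}(C_2;\Z) \cong \Z/2$. Plugging this into the displayed exact sequence yields precisely
\[
0 \to L_1(S\C) \to L_2(\C) \to \Z/2 \to 0,
\]
as claimed. There is no real obstacle here; the only point that deserves a brief word is the triviality of the $C_2$-action on $K_0(\C)$, which is why the Tate cohomology takes the stated form rather than the form it would have for the sign representation.
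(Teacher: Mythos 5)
Your proof is correct and follows essentially the same route as the paper: apply \cref{cofiber of theta} with $A=\C$, $k=1$, observe that the $C_2$-action on $K_0(\C)\cong\Z$ is trivial, and plug in the Tate cohomology of $C_2$ with trivial $\Z$-coefficients. The only cosmetic difference is that you justify triviality of the $C_2$-action by noting that the dual of the rank-one free module is again free of rank one, whereas the paper's remark appeals to the more general fact that $K_0$ of any $C^*$-algebra is generated by self-adjoint idempotents; both are fine.
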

\begin{proof}
By \cref{cofiber of theta} we only need to argue why $\hat H^{-3}(C_2, K_0(\C)) = 0$ and $\hat H^{-2}(C_2, K_0(\C)) = \Z/2$. Both facts  follow since $K_0(\C) \cong \Z$ carries the trivial $C_2$-action and the computation of Tate cohomology in this case. 
\end{proof}

\begin{Rmk}
The triviality of the $C_2$-action on $K_0(\C)$ holds in greater generality:
For a $C^*$-algebra $A$, all elements in $K_0(A)$ can be represented by (formal differences) of self-adjoint idempotents. Thus the action of $C_2$ on $K_0(A)$ is always trivial.
\end{Rmk}

\subsection{The proof of \cref{effect-tau-on-homotopy}}

The strategy will be to first prove \cref{effect-tau-on-homotopy} in the case $A= \C$ and then prove the general case using that the map $kA \to \ell A$ is a module map over $ku \to \ell \C$ and explicit low dimensional calculations. For this we need some preparatory lemmas.
\begin{Lemma}\label{effect-on-pi_0-and-pi_1}
The transformation $\tau$ satisfies that the map
\[ \tau_A \colon \pi_i(kA) \xrightarrow{\cong} \pi_i(\ell A) \]
is an isomorphism for all $A \in \KK_\infty$ and $i \in \{0,1\}$.
\end{Lemma}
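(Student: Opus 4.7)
The plan is to treat $i=0$ and $i=1$ separately, reducing the second case to the first via the suspension equivalence in the stable $\infty$-category $\KK_\infty$.

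For $i=0$, I would argue via the Yoneda-style identification already employed in the construction of $\tau$. Since both $k$ and $\ell$ factor through $\KK_\infty$ (by post-composing the factorizations of \cref{K-theory-spectra} with $\tau_{\geq 0}$), the induced natural transformation $\pi_0(\tau)$ is a morphism $K_0 \to L_0$ in $\Fun(\KK,\Ab)$. By \cref{tau second time} this hom-set is identified with $L_0(\C)\cong\Z$ via $\eta \mapsto \pi_0(\eta_\C)(1)$, and by construction $\tau = \tau(1)$ is sent to $1 \in \Z$. Since the natural isomorphism of \cref{K groups isomorphic to L groups} is also sent to $1$ under the same identification, the two transformations agree in $\Hom_{\Fun(\KK,\Ab)}(K_0,L_0)$, and therefore $\pi_0(\tau_A)$ is an isomorphism for every $A$.

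For $i=1$, I would pass to the $C^*$-algebraic suspension $S$. Since $CA \simeq 0$ in $\KK_\infty$ and both $K$ and $L$ are exact as functors on $\KK_\infty$, applying them to the cofiber sequence $SA \to CA \to A$ produces natural equivalences $K(SA) \simeq \Omega KA$ and $L(SA) \simeq \Omega LA$ in $\Sp$. Taking $\pi_0$ gives natural identifications $\pi_1(kA) \cong \pi_0(k(SA))$ and $\pi_1(\ell A) \cong \pi_0(\ell(SA))$, and naturality of $\tau$ provides a commutative square
\[\xymatrix{\pi_1(kA) \ar[r]^-{\tau_A} \ar[d]_-\cong & \pi_1(\ell A) \ar[d]^-\cong \\ \pi_0(k(SA)) \ar[r]_-{\tau_{SA}} & \pi_0(\ell(SA))}\]
whose lower horizontal arrow is an isomorphism by the $i=0$ case applied to $SA$. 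Hence the upper horizontal arrow is also an isomorphism.

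I do not expect a serious obstacle: the whole argument reduces to the representability computation of \cref{tau second time} combined with the existence of the Miller--Karoubi--Rosenberg isomorphism in \cref{K groups isomorphic to L groups}. The only point to watch is the compatibility of the suspension identifications with $\tau$, but this is automatic from the naturality of $\tau$ together with the stability of $\KK_\infty$, which forces $S$ to coincide with the loop functor there.
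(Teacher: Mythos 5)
Your $i=0$ case is correct and matches the paper's reasoning, just spelled out more explicitly: $\tau$ was chosen so that $\pi_0(\tau)$ agrees with the isomorphism of Theorem \ref{K groups isomorphic to L groups}, by the additive Yoneda identification.

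The $i=1$ case contains a genuine gap: you assert that "$L$ [is] exact as [a] functor on $\KK_\infty$," hence $L(SA) \simeq \Omega L A$. This is false, and it is in fact the central point of Section \ref{excision} that $L$-theory does \emph{not} satisfy excision in general. While $\KK_\infty$ is stable and $SA = \Omega A$ there, stability of the source does not make every functor out of it exact -- that is precisely what one would need to prove. The paper shows in Proposition \ref{cofiber of theta} that the cofiber of the canonical map $\Sigma L(SA) \to LA$ is $H(K_0(A))^{tC_2}$, which is nonzero whenever $K_0(A) \neq 0$ (e.g.\ already for $A = \C$). So $L(SA) \not\simeq \Omega LA$ in general, and your claimed equivalence fails. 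What \emph{is} true, and what the paper actually uses, is the much finer consequence of Proposition \ref{cofiber of theta}: the map $\Sigma L(SA) \to LA$ induces an isomorphism $\pi_{2k}(L(SA)) \cong \pi_{2k+1}(LA)$ for all $k$, even though it is not an equivalence of spectra. In particular for $k=0$ this yields exactly the lower-right vertical isomorphism you need in your commuting square, and the rest of your argument then goes through. So the gap is repairable, but only by invoking the Tate-cohomology analysis of Proposition \ref{cofiber of theta} rather than a nonexistent exactness of $L$.
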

\begin{proof}
The case $i=0$ follows since $\tau$ was chosen to lift the isomorphism of \cref{K groups isomorphic to L groups} on $\pi_0$, compare the remark after the proof of \cref{space of transformations}. For the case $i=1$ we consider the commutative diagram
\[\xymatrix{\pi_0(k(SA)) \ar[r]^-\cong \ar[d]_\cong & \pi_1(kA) \ar[d] \\ \pi_0(\ell(SA)) \ar[r]_-\cong & \pi_1(\ell A)}\]
where the horizontal maps come from the canonical maps of spectra
\[ \Sigma k(SA) \to kA \text{ and }  \Sigma \ell(SA) \to \ell A \]
which is an isomorphism on $\pi_0$ for (topological) $K$-theory (as $K$-theory is excisive) and it is the content of \cref{suspension-iso-L-theory} that it also an isomorphism for $L$-theory. Thus we deduce the claim from the case $i=0$.
\end{proof}

Using that both $\pi_2(ku)$ and $\pi_2(\ell\C)$ are infinite cyclic groups we now obtain the following 
\begin{Lemma}\label{effect-on-pi_2}
There is a generator $b \in \pi_2(\ell\C)$ such that the map $\tau_\C\colon ku \to \ell\C$ satisfies
\[ \xymatrix@R=.2cm{\pi_2(ku) \ar[r] & \pi_2(\ell\C) \\ \beta \ar@{|->}[r] & 2b }\]
\end{Lemma}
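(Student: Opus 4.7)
The plan is to determine the integer $n$ with $\tau_\C(\beta)=nb$ by combining the naturality of $\tau$ with the excision behavior of both functors on the short exact sequence $0\to S\C \to C\C \to \C \to 0$. Since $C\C$ is contractible as a $C^*$-algebra it is $\KK$-equivalent to zero, hence $k(C\C)\simeq 0 \simeq \ell(C\C)$ in $\KK_\infty$, so the boundary maps of the resulting cofiber sequences give natural maps
\[\delta^k\colon k(S\C) \longrightarrow \Omega k\C, \qquad \delta^\ell\colon \ell(S\C) \longrightarrow \Omega \ell\C\]
which are compatible via $\tau$. Since topological $K$-theory is excisive, $\delta^k$ is an equivalence, and in particular $\pi_1(k(S\C))\cong \pi_2(ku)=\Z\beta$. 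Call $\tilde\beta\in\pi_1(k(S\C))$ the preimage of $\beta$ under this isomorphism.

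The $L$-theory side is controlled by \cref{suspension-iso-L-theory} and \cref{times2}. Namely the induced map $\pi_1(\ell(S\C))\to \pi_2(\ell\C)$ fits into the short exact sequence
\[0 \longrightarrow \pi_1(L(S\C)) \longrightarrow \pi_2(L\C) \longrightarrow \Z/2 \longrightarrow 0\]
from \cref{times2}, so $\pi_1(\ell(S\C))\hookrightarrow \pi_2(\ell\C)=\Z b$ is the inclusion of the index-two subgroup $2\Z b$. In particular a generator of $\pi_1(\ell(S\C))$ maps to $\pm 2b$ in $\pi_2(\ell\C)$.

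Next I would invoke \cref{effect-on-pi_0-and-pi_1} applied to the suspension $S\C\in \cc$: it tells us that
\[\tau_{S\C}\colon \pi_1(k(S\C)) \longrightarrow \pi_1(\ell(S\C))\]
is an isomorphism of infinite cyclic groups, so $\tau_{S\C}(\tilde\beta)$ is a generator of $\pi_1(\ell(S\C))$ and therefore maps to $\pm 2b$ in $\pi_2(\ell\C)$. Naturality of $\tau$ with respect to the boundary maps $\delta^k$ and $\delta^\ell$ then yields the commutative square
\[\xymatrix{\pi_1(k(S\C)) \ar[r]^-{\tau_{S\C}} \ar[d]_{\delta^k_*}^\cong & \pi_1(\ell(S\C)) \ar@{^{(}->}[d]^{\delta^\ell_*} \\ \pi_2(ku) \ar[r]_-{\tau_\C} & \pi_2(\ell\C)}\]
so $\tau_\C(\beta)=\delta^\ell_*(\tau_{S\C}(\tilde\beta))=\pm 2 b$. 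Replacing $b$ by $-b$ if necessary, we can choose the generator so that $\tau_\C(\beta)=2b$, which proves the lemma.

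The only subtle point is the setup of the two boundary maps and the identification of their effect on $\pi_1$; once this is in hand, the claim follows formally from naturality and the already established \cref{effect-on-pi_0-and-pi_1} and \cref{times2}. The main conceptual content is that the excision failure of $L$-theory, measured in this case by the Tate term $\hat H^{-2}(C_2;K_0(\C))=\Z/2$, is precisely what forces the factor of $2$ in $\tau_\C$ on $\pi_2$.
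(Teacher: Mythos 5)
Your proof is correct and takes essentially the same approach as the paper: both set up the commutative square whose vertical maps are $\tau_{S\C}$ and $\tau_\C$, whose horizontal maps are the boundary maps coming from the cone sequence for $\C$, and both conclude by combining the isomorphism from \cref{effect-on-pi_0-and-pi_1} with the index-two statement from \cref{times2}. You spell out the construction of the boundary maps and the sign normalization a little more explicitly than the paper, but the content is the same.
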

\begin{proof}
We consider the commutative diagram
\[\xymatrix{\pi_1(k(S\C)) \ar[r]^-\cong \ar[d]_\cong & \pi_2(ku) \ar[d] \\ \pi_1(\ell(S\C)) \ar[r]_-{\cdot 2} & \pi_2(\ell \C)}\]
The left vertical map is an isomorphism by \cref{effect-on-pi_0-and-pi_1}. The upper horizontal map is an isomorphism since $K$-theory is excisive and the fact that the lower horizontal map is given by multiplication by $2$ follows from \cref{times2}.
\end{proof}

\begin{proof}[Proof of \cref{effect-tau-on-homotopy}]
Recall from \cref{equivalence for C} that since $\tau$ is multiplicative it follows that the map $\tau_\C \colon ku \to \ell \C$ is a map of $\e_\infty$-ring spectra.
Thus \cref{effect-on-pi_2} and the fact that $ku_* = \Z[\beta]$ and $\ell\C_* = \Z[b]$ imply that 
\begin{align}\label{equation-beta} \tau_\C(\beta^k) = 2^k\cdot b^k.\end{align}
For $i \in \{0,1\}$ and $k \geq 0$ we consider the diagram 
\[\xymatrix{\pi_i(kA) \otimes \pi_{2k}(ku) \ar[r]^-\cong \ar[d]_{\tau_A\otimes \tau_\C} & \pi_{2k+i}(kA) \ar[d]^{\tau_A} \\ 
		\pi_i(\ell A) \otimes \pi_{2k}(\ell\C) \ar[r]_-\cong & \pi_{2k+i}(\ell A)}\]
which commutes because $\tau$ is lax symmetric monoidal (see \cref{lax-refinement}). Here the horizontal arrows come from the module structures of $kA$ and $\ell A$ over $ku$ and $\ell\C$ respectively and are isomorphisms by the respective periodicities. The proposition now follows  from \cref{effect-on-pi_0-and-pi_1} and \cref{equation-beta}.
\end{proof}

%%%%%%%%%%%%%%%%%%%%%%%%%%%%%%%%%%%%%%%%%%%%%%%%%%%%%%%%%%%%%%
%%%%%%%%%%%%%%%%%%%%%%%%%%%%%%%%%%%%%%%%%%%%%%%%%%%%%%%%%%%%%%

\section{Applications}\label{Applications}

\begin{Cor}\label{L adj exact}
The two functors $K\adj, L\adj: \cc \to \Sp$ are equivalent as lax symmetric monoidal functors.
\end{Cor}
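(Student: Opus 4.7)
The strategy is to upgrade the connective lax symmetric monoidal transformation $\tau\colon k \to \ell$ of \cref{lax-refinement} to a periodic equivalence by first inverting $2$ and then inverting the Bott element. Throughout I would exploit that a symmetric monoidal smashing localization of $\Sp$, applied pointwise, preserves lax symmetric monoidal structures on functors into $\Sp$.

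First I would apply $(-)\adj\colon \Sp \to \Sp$, which is a symmetric monoidal smashing localization, pointwise to $\tau$, obtaining a lax symmetric monoidal transformation $\tau\adj\colon k\adj \to \ell\adj$. By \cref{effect-tau-on-homotopy}, on every homotopy group of every $A \in \cc$ the kernel of $\tau_A$ is $2^k$-torsion and the cokernel is a quotient by $2^k$, both of which vanish after inverting $2$. Hence $\tau\adj$ is a natural equivalence of lax symmetric monoidal functors $\N\cc \to \Sp$.

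To pass to the periodic versions, I would use that the lax symmetric monoidal structure makes each $kA$ a module over $k\C = ku$ and each $\ell A$ a module over $\ell\C$. The Bott element $\beta \in \pi_2(ku)$ recovers $KA \simeq kA[\beta^{-1}]$, and the generator $b \in \pi_2(\ell\C)$ from \cref{effect-on-pi_2} recovers $LA \simeq \ell A[b^{-1}]$; the latter relies on $L\C$ being $2$-periodic by \cref{properties of L-theory}(2), so that $b$ acts invertibly on every $L\C$-module, in particular on $LA$. Since inverting an element in the homotopy of the monoidal unit is a symmetric monoidal localization, both $K\adj$ and $L\adj$ inherit canonical lax symmetric monoidal refinements from $k\adj$ and $\ell\adj$. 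Finally, by \cref{effect-on-pi_2} we have $\tau_\C(\beta) = 2b$, which after inverting $2$ is a unit multiple of $b$, so localizing $\tau\adj$ at $\beta$ on the source matches localizing at $b$ on the target, yielding the required equivalence $K\adj \xrightarrow{\simeq} L\adj$ in $\Fun_\lax(\N\cc, \Sp)$. The delicate step will be to make the identification $K \simeq k[\beta^{-1}]$ and $L \simeq \ell[b^{-1}]$ precise as an equivalence of lax symmetric monoidal functors, but this is a standard application of inverting a homogeneous element in an $\e_\infty$-ring in the $\infty$-categorical setting.
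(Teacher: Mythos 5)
Your argument follows essentially the same route as the paper's: both invoke the periodicities $KA \simeq kA[\beta^{-1}]$, $LA \simeq \ell A[b^{-1}]$, the computation $\tau_\C(\beta) = 2b$ from \cref{effect-on-pi_2}, and \cref{effect-tau-on-homotopy} to show the kernel and cokernel of $\tau_A$ are killed by inverting $2$. The only difference is one of exposition: you spell out more carefully the multiplicative steps (that $(-)\adj$ is a symmetric monoidal smashing localization applied pointwise, and that inverting $\beta$ respectively $b$ is a symmetric monoidal localization compatible with $\tau\adj$ because $2b$ becomes a unit multiple of $b$), whereas the paper compresses these into a few sentences. Your version is correct and slightly more self-contained on the structured side, but it is not a genuinely different proof.
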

\begin{proof}
We will use that
\[ KA \simeq kA[\beta^{-1}] \text{ and } LA \simeq \ell A[b^{-1}] \]
which follows from periodicity. From \cref{effect-on-pi_2} we see that the transformation $\tau_\C$ sends $\beta$ to $2b$. If we want to extend this map  periodically we thus have to invert 2 in the target. The fact that the kernel and cokernel of $\tau_A$ are 2-torsion as shown in \cref{effect-tau-on-homotopy} then implies that we get the desired equivalence after inverting 2.
\end{proof}

\begin{Rmk}
The fact that there is a natural equivalence $KA\adj \to LA\adj$ can also be proven without the explicit computation of the map $\tau_A$ on homotopy. 
We will do this in the real case in \cref{KO vs L} below.
The key ingredient is to see that $L\adj \in \Fun^\lex_\lax(\KK_\infty,\Sp)$ so that one can appeal to the stable Yoneda lemma.
\end{Rmk}

%%%%%%%%%%%%%%%%%%%%%%%%%%%%%%%%%%%%%%%%%%%%%%%%%%%%%%%%%%%%%%

\subsection{Real version}\label{real}

Let $\rc$ be the category of separable real $C^*$-algebras. It still has a forgetful functor
to the category of non-unital involutive rings.
Thus we may consider the functors
\[ KO, L \colon \N\rc \to \Sp \]
just as we have done before by replacing complex by real $C^*$-algebras throughout.
The main theorem we want to prove in this section is the following.
\begin{Thm}\label{KO vs L}
The two functors $KO\adj, L\adj\colon \N\rc \to \Sp$ are equivalent as lax symmetric monoidal functors.
\end{Thm}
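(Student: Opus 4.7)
The strategy is to mimic the construction of $\tau$ from the complex case (Theorem A and \cref{L adj exact}), working $2$-locally throughout since Rosenberg's isomorphism $KO_*(A)\adj \cong L_*(A)\adj$ fails integrally in the real setting. First I would construct a real analog of $\KK_\infty$, namely $\KK_\infty^\R := \N\rc[W^{-1}]$ where $W$ is the class of real $\KK$-equivalences. Real Kasparov theory satisfies the same formal properties as its complex counterpart used in \cref{KK infty category} --- it is symmetric monoidal via the maximal tensor product, $KO_0$ is corepresented by the tensor unit $\R$, and $\rc$ carries a fibration category structure with $\KK$-equivalences as weak equivalences (the real analogs of \cref{KK-category} and \cref{Uuye}). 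Consequently the arguments of \cref{KK-stable} and \cref{Lemmastably} transfer verbatim, yielding that $\KK_\infty^\R$ is a small, stably symmetric monoidal $\infty$-category.

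Next the real versions of \cref{K-theory-spectra} and \cref{corlaxstructure} exhibit $KO\colon \KK_\infty^\R \to \Sp$ as an exact, lax symmetric monoidal, corepresented functor (corepresented by $\R$) which is initial in $\Fun^\lex_\lax(\KK_\infty^\R, \Sp)$. Inverting $2$, $KO\adj$ is initial in $\Fun^\lex_\lax(\KK_\infty^\R, \Sp\adj)$. The central technical step is then to verify that $L\adj$ lies in this functor category: Rosenberg's Theorem 1.11 (the real analog of \cref{K groups isomorphic to L groups}) provides a natural isomorphism $KO_n(A)\adj \cong L_n(A)\adj$ for unital real $C^*$-algebras, extending to the non-unital case via the unitalization fiber sequence of \cref{L-theory of CAlg}; hence $L\adj$ inverts real $\KK$-equivalences and descends to $\KK_\infty^\R$. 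Exactness (product preservation and fiber-sequence preservation) follows from the real analogs of \cref{L-theory and products} and \cref{L-theory exact after inverting 2}. The lax symmetric monoidal refinement is the $2$-inverted real analog of \cref{propositionlaxL}, via the same ad-theory machinery of \cite{Laures,Laures2}.

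By the initiality of $KO\adj$ there is then an essentially unique lax symmetric monoidal transformation $\tau^\R\adj\colon KO\adj \to L\adj$. To show it is an equivalence, apply the stable Yoneda lemma (real analog of \cref{Lemma:Yoneda}): the underlying natural transformation is classified up to homotopy by its value at the unit, namely $1 \in \pi_0(L(\R)\adj) \cong \Z\adj$, which multiplicativity forces. Consequently $\tau^\R_\R\adj\colon KO\adj \to L(\R)\adj$ is the unique $\e_\infty$-ring map that is an isomorphism on $\pi_0$; combining this with Rosenberg's unit-preserving ring isomorphism on $\pi_*$ identifies $\tau^\R_\R\adj$ with Rosenberg's ring equivalence, hence an equivalence of ring spectra. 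For general $A$, $\tau^\R_A\adj$ is a map of $KO\adj$-modules over the equivalence $\tau^\R_\R\adj$; Rosenberg's natural isomorphism $\pi_*(KO(A))\adj \cong \pi_*(L(A))\adj$ is also a $\pi_*(KO)\adj$-module isomorphism, so naturality together with the agreement at $A=\R$ force $\pi_*(\tau^\R_A\adj)$ to coincide with it, whence $\tau^\R_A\adj$ is an equivalence for every $A$.

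The delicate step will be the final identification of the abstractly-constructed $\tau^\R$ with Rosenberg's concrete homotopy-group isomorphism, which relies on naturality, lax multiplicativity, and the $KO\adj$-module structure. An alternative route that bypasses this difficulty would be to invoke Galois descent for the $C_2$-extension $\R \to \C$ after inverting $2$: apply \cref{L adj exact} to the complexification $A_\C := A \otimes_\R \C$ with its natural $C_2$-action and take $C_2$-homotopy fixed points, reducing the real case to the already-established complex one via natural lax monoidal identifications $KO(A)\adj \simeq K(A_\C)^{hC_2}\adj$ and $L(A)\adj \simeq L(A_\C)^{hC_2}\adj$ --- the latter being the main identification still requiring verification, but expected to hold after inverting $2$ since the $C_2$-Tate construction vanishes rationally.
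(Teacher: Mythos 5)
Your overall structure matches the paper's: build the stable symmetric monoidal $\infty$-category $\KK_\infty^{\R}$, identify $KO$ with the functor corepresented by $\R$ and hence initial in $\Fun^{\lex}_{\lax}(\KK_\infty^{\R},\Sp)$, show that $L\adj$ lives in that functor category, and then analyze the essentially unique lax symmetric monoidal map $\tau\colon KO\adj\to L\adj$. However, two of your key steps diverge from the paper and the second one has a real gap.

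First, for the $\KK$-invariance of $L\adj$ you cite Rosenberg's Theorem~1.11 as a black box. The paper deliberately avoids this and instead gives a self-contained argument (Proposition~\ref{L KK invariant}): the complexification $A\to A_\C$ has a natural retraction $L(A_\C)\to LA$ up to a factor of $2$, built from the embedding $\C\hookrightarrow M_2(\R)$ and Morita invariance of $L$-theory, so that $LA\adj$ is a natural retract of $L(A_\C)\adj$. Since $f_\C$ is a complex $\KK$-equivalence whenever $f$ is a real one, $\KK$-invariance of $L\adj$ follows from the complex case already established in Corollary~\ref{L-factors-through-KK}. This is a genuinely different (and more elementary) route; given the paper's stated skepticism toward Rosenberg's spectrum-level arguments, the self-contained version is preferable, although at the group level Theorem~1.11 is not under dispute and your route is not wrong.

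Second, and this is the gap: your argument that $\tau^{\R}_A\adj$ is an equivalence for all $A$ does not go through as written. You assert that $\tau^{\R}_{\R}\adj$ is ``the unique $\e_\infty$-ring map that is an isomorphism on $\pi_0$'' --- but uniqueness fails ($\pi_*(KO)\adj\cong\Z\adj[\alpha^{\pm1}]$ admits many ring endomorphisms fixing $\pi_0$), and uniqueness is not what you need anyway. More seriously, your claim that ``naturality together with the agreement at $A=\R$ force $\pi_*(\tau^{\R}_A)$ to coincide'' with Rosenberg's module isomorphism is not justified: the Yoneda lemma on $\KK^{\R}$ classifies natural transformations out of $KO_0=\KK^{\R}(\R,-)$, not out of the graded module functor $\pi_*(KO(-))$, and the $\pi_*(KO)\adj$-module structure transports an isomorphism on $\pi_0$ only to degrees $\equiv 0\pmod 4$, leaving $\pi_1,\pi_2,\pi_3$ untouched. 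The paper's argument closes this gap cleanly: first construct a concrete natural isomorphism $KO_0(A)\to L_0(A)$ by the positive-definite-form argument from the proof of Theorem~\ref{K groups isomorphic to L groups}, then use the Yoneda lemma on $\KK^{\R}$ to identify $\pi_0(\tau)$ with it, and finally use that $KO\adj$ and $L\adj$ are exact functors on the stable $\infty$-category $\KK_\infty^{\R}$, so $\pi_n(\tau_A)\cong\pi_0(\tau_{\Omega^n A})$ is an isomorphism for all $n$ and all $A$. You should replace your module-theoretic step by this suspension argument. Your proposed Galois-descent shortcut is an interesting idea but, as you acknowledge, it rests on an unverified identification $L(A)\adj\simeq L(A_\C)^{hC_2}\adj$ which the paper does not establish.
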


This builds on two things. Firstly there is a version of the  $\KK$-category for real $C^*$-algebras, denoted by $\KK^\R$, with the same properties as in the complex case, i.e. it is a localization of $\rc$ along $\KK$-equivalences and $KO_0(-)$ becomes corepresentable on $\KK^\R$ by the tensor unit $\R$. Indeed, $\rc$ admits the structure of a fibration category such that the associated $\infty$-category $\KK_\infty^\R$ is stable with the same proof as in the complex case, cf.\ \cref{KK-stable}. It is also symmetric monoidal with respect to the maximal tensor product cf. \cref{KK symmetric monoidal}. Furthermore the functor $\map_{\KK^\R_\infty}(\R,-)$ is as a lax symmetric monoidal functor equivalent to $KO: \KK^\R_\infty \to \Sp$.

The second thing is that we need $L$-theory to factor through the real $\KK$-category. Recall that we deduced this in the complex case from the fact that the $L$-groups and $K$-groups are naturally isomorphic, see \cref{K groups isomorphic to L groups}. The corresponding statement is not true for real $C^*$-algebras as for example $KO_1(\R) \cong \Z/2$ but $L_1(\R) = 0$ shows. So we cannot conclude $\KK$-invariance for $L$-theory of real $C^*$-algebras as easily. But after inverting 2 it is still true that $K$ and $L$-groups are isomorphic as for example claimed in \cite[Theorem 1.11]{Rosenberg}. We will give an independent argument for $\KK$-invariance after inverting 2.

Let $A \in \rc$. We denote by $A_\C = A\otimes_\R \C$ its complexification which is a complex $C^*$-algebra. From the inclusion $A \to A_\C$ we obtain a natural map
$LA \to L(A_\C) . $

\begin{Prop}\label{L KK invariant}
After inverting 2 this map admits a natural retraction. It follows that the functor $A \mapsto LA\adj$ is $\KK$-invariant.
\end{Prop}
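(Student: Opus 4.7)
The plan is to construct a natural transfer map $\mathrm{tr}_A\colon L(A_\C) \to LA$ for every real $C^*$-algebra $A$ and to show that the composition
\[ LA \xrightarrow{i_A} L(A_\C) \xrightarrow{\mathrm{tr}_A} LA \]
is homotopic to multiplication by $2$; inverting $2$ then turns $\tfrac{1}{2}\mathrm{tr}_A$ into the desired natural retraction. The transfer is induced by restriction of scalars along the involutive ring map $\R \hookrightarrow \C$, using the fact that $\C$ is free of rank $2$ over $\R$ and that the trace $t\colon \C \to \R$, $z \mapsto \tfrac{1}{2}(z+\bar z)$, is $\R$-linear and compatible with involutions. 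Concretely, a Hermitian form $(P,\varphi)$ over $A_\C$ produces a symmetric form $(P,t \circ \varphi)$ over $A$ by restricting $P$ to an $A$-module and post-composing $\varphi$ with $t$; this procedure is functorial on symmetric Poincar\'e complexes and hence produces a natural map of Ranicki $L$-spectra.

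Next I would verify that $\mathrm{tr}_A \circ i_A \simeq 2 \cdot \mathrm{id}_{LA}$. A direct calculation of the Hermitian form $\varphi_\C$ associated to a real symmetric form $(P,\varphi)$ shows that, under the real isomorphism $P \otimes_\R \C \cong P \oplus P$ given by $p + iq \mapsto (p,q)$, the restricted form $t \circ \varphi_\C$ equals $\varphi \oplus \varphi$, which represents $2[P,\varphi]$ in $L_0(A)$. With a point-set model of $L$-theory this upgrades to an honest homotopy of maps of spectra, naturally in $A$.

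After inverting $2$, the map $r_A := \tfrac{1}{2}\mathrm{tr}_A\colon L(A_\C)\adj \to LA\adj$ is a natural retraction of $i_A$. To deduce that $A \mapsto LA\adj$ is $\KK$-invariant, let $f\colon A \to B$ be a $\KK^\R$-equivalence. Then $f_\C\colon A_\C \to B_\C$ is a $\KK^\C$-equivalence (complexification descends to a functor $\KK^\R_\infty \to \KK^\C_\infty$, since tensoring with $\C$ preserves the collection of $\KK$-equivalences), so $L(f_\C)\adj$ is an equivalence of spectra by the complex case (\cref{L-factors-through-KK}). The commutative diagram
\[ \xymatrix{LA\adj \ar[r]^-{i_A} \ar[d]_{Lf\adj} & L(A_\C)\adj \ar[r]^-{r_A} \ar[d]^{L(f_\C)\adj} & LA\adj \ar[d]^{Lf\adj} \\ LB\adj \ar[r]_-{i_B} & L(B_\C)\adj \ar[r]_-{r_B} & LB\adj } \]
with horizontal composites equal to the identity realizes $Lf\adj$ as a retract of $L(f_\C)\adj$ in the arrow $\infty$-category of $\Sp$; since retracts of equivalences are equivalences, $Lf\adj$ is itself an equivalence.

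The main obstacle will be upgrading the identity $\mathrm{tr}_A \circ i_A = 2$ from a statement about homotopy-group homomorphisms to a genuine homotopy between maps of spectra that is natural in $A$. The cleanest route is probably to work in a point-set model where both the transfer and the inclusion are defined on the nose, e.g.\ Ranicki's symmetric $L$-spectra or the Laures--McClure ad-theoretic model, so that the relation $\mathrm{tr} \circ i = 2$ holds literally at the level of maps of symmetric spectra and naturality in $A$ is automatic from the functoriality of the construction.
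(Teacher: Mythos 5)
Your proposal is correct and follows essentially the same strategy as the paper: construct a restriction/transfer map $L(A_\C) \to LA$, identify $\mathrm{tr}_A \circ i_A$ with multiplication by $2$, and deduce $\KK$-invariance by exhibiting $Lf\adj$ as a retract of $L(f_\C)\adj$. The only notable difference is in how the transfer is realized: where you describe it at the level of Hermitian forms via the trace $\C \to \R$ and worry about upgrading to a natural spectrum-level map, the paper sidesteps this by also presenting the restriction as the composite $L(A_\C) \to L(M_2(A)) \simeq LA$ coming from the ring embedding $\C \hookrightarrow M_2(\R)$ followed by Morita invariance of $L$-theory; since this is a composite of maps induced by genuine ring homomorphisms together with a natural Morita equivalence, naturality in $A$ at the spectrum level is immediate, which dissolves the obstacle you flag in your last paragraph.
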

\begin{proof}
Recall that the $L$-theory of a ring $R$ is given by considering perfect complexes over $R$ with a symmetric structure. We claim that the inclusion map $A \to A_\C$ induces a map $L(A_\C) \to LA$ by restriction. 
Alternatively this map can be described as the composite
\[ L(A_\C) \to L(M_2(A)) \simeq LA \]
using the canonical embedding 
\[ \C \ni (x+iy) \mapsto  \begin{pmatrix} x & -y \\ y &x \end{pmatrix} \in M_2(\R).\]
and the fact that $L$-theory is Morita invariant. 
The composite
\[ LA \to L(A_\C) \to LA \]
obtained this way can be identified with multiplication by $2$. Thus the composite
\[ LA\adj \to L(A_\C)\adj \to LA\adj \]
is an equivalence.

For the second part we observe that if $f\colon A\to B$ is a real $\KK$-equivalence then by the first part the map
\[ Lf\adj \colon LA\adj \to LB\adj\]
is a retract of the map
\[ \xymatrix@C=2cm{L(A_\C)\adj \ar[r]^-{L(f_\C)\adj} & L(B_\C)\adj }\]
which is an equivalence since $f_\C\colon A_\C \to B_\C$ is a complex $\KK$-equivalence. 
\end{proof}

\begin{proof}[Proof of \cref{KO vs L}]
By \cref{L KK invariant} we see that $L$ theory induces a functor
\[ L\adj \in \Fun(\KK_\infty,\Sp).\]
We thus want to argue that $L\adj$ admits a lift along the map
\[ \Fun^\lex_\lax(\KK_\infty,\Sp) \to \Fun(\KK_\infty,\Sp).\]
To show that $L\adj$ is (left) exact we need to show that it commutes with finite limits. By \cite[Corollary 4.4.2.5]{LurieHTT} it suffices to show that $L\adj$ preserves the terminal object and pullbacks. It is clear that $L\adj$ preserves the terminal object and \cref{L-theory exact after inverting 2} shows that $L\adj$ commutes with pullbacks.
As in \cref{propositionlaxL} we see that $L$-theory admits a lax symmetric monoidal refinement. 

Thus it follows as in \cref{lax-refinement} that there is an essentially unique lax symmetric monoidal transformation $\tau\colon KO \to L\adj$. For the real $C^*$-algebra $\R$ this map induces on $\pi_0$  the unique ring map
\[ \Z \cong \pi_0(KO) \to \pi_0(L\R\adj) \cong \Z\adj.\]
As in the proof of \cref{K groups isomorphic to L groups} one can show that there is a natural isomorphism $KO_0(A) \to L_0(A)$ for all real $C^*$-algebras. The Yoneda lemma for $\KK^\R$ implies that the effect on $\R$ determines a natural transformation $KO_0 \to L_0$  uniquely.  Thus after inverting 2 this natural isomorphism has to agree with $\tau$ because they agree on $\R$. This shows  that the map
\[ \tau_A \colon \pi_0(KO(A)\adj) \to \pi_0(LA\adj) \]
is an isomorphism for all $A$. Finally it follows from excisiveness of both sides and the $\pi_0$-case that the induced transformation
\[ \tau \colon KO\adj \to L\adj \]
is an equivalence as claimed.
\end{proof}

\begin{Cor}\label{KOadj and LRadj E-infty equivalent}
The map 
\[ \tau_\R \colon KO\adj \to L\R\adj \]
is a map of $\e_\infty$-ring spectra and an equivalence, thus an equivalence of $\e_\infty$-ring spectra.
\end{Cor}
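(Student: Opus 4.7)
The plan is to deduce this corollary directly from \cref{KO vs L}, which provides an equivalence of lax symmetric monoidal functors $\tau \colon KO\adj \xrightarrow{\simeq} L\adj$ on $\N\rc$. The key observation is that evaluating a lax symmetric monoidal functor at the tensor unit yields a commutative algebra object (i.e., an $\e_\infty$-ring), and a lax symmetric monoidal natural transformation, evaluated at the tensor unit, is automatically a morphism of commutative algebras.

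First I would note that $\R \in \rc$ is the tensor unit for the maximal tensor product of real $C^*$-algebras. Since both $KO\adj$ and $L\adj$ carry lax symmetric monoidal refinements (established in the proof of \cref{KO vs L}, using \cref{propositionlaxL} in the real setting together with the fact that $\tau_{\geq 0}$ and inverting $2$ are lax symmetric monoidal operations), their values at $\R$ acquire canonical $\e_\infty$-ring structures. These structures recover the standard ring spectrum structures on $KO\adj$ and $L\R\adj$. By functoriality of the forgetful functor $\mathrm{CAlg}(\Sp) \to \Sp$, the component $\tau_\R$ of the lax symmetric monoidal transformation $\tau$ is a morphism in $\mathrm{CAlg}(\Sp)$, i.e.\ a map of $\e_\infty$-ring spectra.

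Second, since $\tau \colon KO\adj \to L\adj$ is an equivalence of functors by \cref{KO vs L}, its component at $\R$, namely $\tau_\R \colon KO\adj \to L\R\adj$, is an equivalence in $\Sp$. An $\e_\infty$-ring map whose underlying map of spectra is an equivalence is automatically an equivalence of $\e_\infty$-ring spectra, because the forgetful functor $\mathrm{CAlg}(\Sp) \to \Sp$ is conservative. Combining these two observations yields the corollary.

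There is no real obstacle here beyond correctly packaging the formalism: the substantive content is \cref{KO vs L} itself, which already furnishes a lax symmetric monoidal equivalence. The only thing to check is that the $\e_\infty$-structures on $KO\adj$ and $L\R\adj$ induced by evaluating the lax symmetric monoidal functors at $\R$ are the ones considered in the statement, which follows from the construction of the lax symmetric monoidal refinement of $L$-theory in \cref{non unital L-theory} together with the analogous (standard) construction for topological $K$-theory of real $C^*$-algebras.
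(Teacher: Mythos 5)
Your proposal is correct and matches the paper's (implicit) reasoning: the paper states this as an immediate corollary of \cref{KO vs L} without a separate proof, the content being exactly the formal facts you spell out — that a lax symmetric monoidal equivalence of functors, evaluated at the tensor unit $\R$, yields an equivalence of $\e_\infty$-rings, using conservativity of the forgetful functor $\mathrm{CAlg}(\Sp)\to\Sp$.
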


\begin{Rmk}
The fact that these two  spectra are equivalent as homotopy ring spectra was already observed in \cite[Lecture 25]{Lurie} by comparing the formal groups associated to these spectra. The equivalence of underlying spaces $\Omega^\infty KO\adj \simeq \Omega^\infty L\R\adj$ has been known for a long time and is due to Sullivan.

Notice that the canonical map $L\Z\adj \to L\R\adj$ is an equivalence of $\e_\infty$-ring spectra as well. Thus as a consequence of our corollary we get an equivalence of $\e_\infty$-ring spectra $KO\adj \simeq L\Z\adj$.
\end{Rmk}

\subsection{Applications to assembly maps}\label{assembly}

The equivalence of the two functors $KO\adj$ and $L\adj$ has the following application to the Baum-Connes and Farrell-Jones conjectures.

%Let us denote by $\Orb^\omega$ the full subcategory of the $(2,1)$-category $\Gpd_2$ of groupoids spanned by groupoids with a single object and at most countably many morphisms. We call this the global orbit category. Concretely this is the $(2,1)$-category whose objects are countable discrete groups, whose $1$-morphisms are group homomorphisms and whose $2$-morphisms are conjugations.

By $\Gpd_2^\omega$ we denote the $(2,1)$-category of small groupoids with at most countable many morphisms, here $2$-morphisms are natural transformations. To any such groupoid one can associate a separable $C^*$-algebra called the maximal full groupoid $C^*$-algebra, see \cite[Remark 2.3]{DavisLueck}, \cite[Definition 6]{LNS}, or \cite[3.16]{DellAmbrogio2} together with \cite[section 3]{Joachim2}. As observed in \cite{DavisLueck}, this association is not functorial for all morphisms of groupoids, just for functors that are injective on the set of objects. In \cite[Corollary 8]{LNS} we prove the following proposition in the case of complex $C^*$-algebras. The proof for the real case is verbatim the same. 
\begin{Prop}
There is a functor 
\[\Gpd_2^\omega \to \KK^\R_\infty \]
which on objects sends a groupoid to its real full groupoid $C^*$-algebra.
\end{Prop}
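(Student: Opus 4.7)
The plan is to reproduce the strategy used in \cite{LNS} for complex $C^*$-algebras, essentially verbatim. The starting observation is that, although the assignment $\G \mapsto C^*_{\max}(\G;\R)$ fails to be functorial in general groupoid morphisms, it \emph{is} functorial along the wide subcategory $\Gpd^{\omega,\mathrm{inj}}$ of small countable groupoids and functors that are injective on objects. This yields an honest $1$-functor $C^*_{\max}(-;\R)\colon \Gpd^{\omega,\mathrm{inj}} \to \rc$, and hence, after postcomposition with the canonical functor $\rc \to \KK^\R_\infty$, a functor $\Gpd^{\omega,\mathrm{inj}} \to \KK^\R_\infty$ on objects doing the right thing.

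The second step is to promote this to a $(2,1)$-functor on $\Gpd^{\omega,\mathrm{inj}}_2$. A natural isomorphism $\alpha\colon F \Rightarrow G$ between two object-injective functors $F,G\colon \G \to \h$ is encoded by a family of unitaries $\alpha_g \in C^*_{\max}(\h;\R)$, and the homotopy of $*$-homomorphisms obtained by conjugating along a path of unitaries implements an equivalence $C^*_{\max}(F;\R) \simeq C^*_{\max}(G;\R)$ in $\KK^\R_\infty$, using the homotopy invariance built into the $\KK$-category (\cref{KK-category}(2)). A standard check on horizontal/vertical compatibility of these homotopies shows that the assignment descends to a $(2,1)$-functor $\Gpd^{\omega,\mathrm{inj}}_2 \to \KK^\R_\infty$.

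To extend this to all of $\Gpd_2^\omega$, I would use a mapping cylinder argument: any groupoid functor $F\colon \G \to \h$ factors canonically, up to natural isomorphism, as an object-injective functor $\G \hookrightarrow M_F$ followed by an equivalence of groupoids $M_F \xrightarrow{\simeq} \h$. Since equivalences of groupoids induce $\KK^\R$-equivalences on full groupoid $C^*$-algebras, via a Morita-type argument analogous to the complex case, one can present $\Gpd^\omega_2$ as the Dwyer--Kan localization of $\Gpd^{\omega,\mathrm{inj}}_2$ at the class $\mathcal{E}$ of equivalences of groupoids. The universal property of localization (\cref{localization}) then produces the desired functor $\Gpd_2^\omega \to \KK^\R_\infty$, which by construction sends a groupoid to its real full groupoid $C^*$-algebra.

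The key technical point --- and the main obstacle --- is to verify that an equivalence of (countable) groupoids induces a $\KK^\R$-equivalence of real full groupoid $C^*$-algebras. This reduces, as in the complex case, to the statement that the inclusion of a skeleton induces a Morita equivalence of groupoid $C^*$-algebras, combined with the fact that Morita equivalences over $\R$ yield isomorphisms in $\KK^\R$. Both ingredients are established for real $C^*$-algebras by the same constructions of Hilbert modules and imprimitivity bimodules as over $\C$, so the argument of \cite{LNS} transports to the real setting without change.
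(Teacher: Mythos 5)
The paper's ``proof'' of this proposition is a one-line citation: it points to \cite[Corollary 8]{LNS} for the complex case and asserts that the real case is verbatim the same. So there is no independent argument in the paper itself against which to check your proposal step by step; rather, your write-up is a reconstruction of what \cite{LNS} must do, transported to $\R$. On that basis your proposal identifies the correct ingredients: strict functoriality on object-injective groupoid functors, the fact that a natural isomorphism of such functors produces a unitary multiplier and hence a $\KK$-trivial conjugation of $*$-homomorphisms, the mapping-cylinder factorization reducing the general case to the injective-on-objects case, and the observation that equivalences of groupoids induce Morita equivalences (hence $\KK^\R$-equivalences) of full groupoid $C^*$-algebras — all of which indeed port to $\R$ without change, as the paper asserts.

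Two points worth flagging, since they are exactly what both the paper's terse remark and your ``a standard check on horizontal/vertical compatibility'' conceal. First, promoting the $1$-functor on $\Gpd^{\omega,\mathrm{inj}}$ to a functor of $\infty$-categories out of the $(2,1)$-category $\Gpd_2^{\omega,\mathrm{inj}}$ requires supplying \emph{coherent} homotopies in $\KK^\R_\infty$, not merely knowing that naturally isomorphic groupoid functors become equal in the homotopy category $\KK^\R$; in particular, ``conjugating along a path of unitaries'' is imprecise — the relevant unitary multiplier need not be path-connected to the identity, and what one actually uses is the $\KK$-theoretic fact that conjugation by a multiplier unitary is trivial, together with a strictification or rectification device to make this coherent. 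Second, the assertion that $\Gpd_2^\omega$ is the Dwyer--Kan localization of $\Gpd_2^{\omega,\mathrm{inj}}$ at equivalences of groupoids is plausible via the cylinder factorization you describe, but it is genuinely a statement to prove, not an immediate consequence of that factorization; this is the other place where the substance of the cited proof lives. Neither of these is a \emph{gap} relative to the paper, since the paper delegates both to \cite{LNS}, but you should be aware that they carry most of the weight.
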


It follows from \cref{KO vs L} that the compositions  $KO\adj, L\adj: \Grp_2^\omega \to  \KK_\infty^\R \to \Sp$ are also equivalent.
Since $\Sp$ is the $\infty$-category associated to a combinatorial model category we obtain that these restrictions can be identified with functors 
\[ KO\adj, L\adj \colon \Gpd^\omega \to \Sp_1 \]
that have the property that they send equivalences of groupoids to equivalences of spectra \cite[Proposition 4.2.4.4]{LurieHTT}. Furthermore it follows that  these functors are related through a zig-zag of natural weak equivalences. It follows from the work of Davis and L\"uck  \cite{DavisLueck} that their associated assembly maps are equivalent. For the further discussion we adopt the terminology from  \cite{DavisLueck}. 

It is well-known that the Baum-Connes assembly map (using the reduced group $C^*$-algebra) factors through the version with the full group $C^*$-algebra.
We obtain for every countable discrete group $G$ a commutative diagram (the commutativity being ensured by the fact that the two \emph{functors} are equivalent)
\[\xymatrix{KO^G_*(\underline{E}G)\adj \ar[rr] \ar[d]_\cong^\tau & & KO_*(C^*(G;\R))\adj \ar[d]^\cong_\tau \\ L\R^G_*(\underline{E}G)\adj \ar[r]^-{\FJ\adj} & L_*(\R G)\adj \ar[r] & L_*(C^*(G;\R))\adj}\]
where the assembly map $\FJ\adj$  is isomorphic to the 2-inverted version of the map which is conjectured to be an isomorphism in the  Farrell-Jones conjecture. This follows from the fact that the equivariant $L$-theory groups of the classifying space for virtually cyclic subgroups of $G$ and the equivariant $L$-theory groups of the classifying space for finite subgroups of $G$ are isomorphic after inverting 2, see \cite[Proposition 2.18]{LueckReich}.

Furthermore the two squares
\[ \xymatrix{L\R^G_*(\underline{E}G)\adj \ar[r] & L_*(\R G)\adj  &
KO_*(C^*(G;\R))\adj \ar[d]^\cong_\tau \ar[r] & KO_*(C^*_r (G;\R))\adj \ar[d]^\cong_\tau \\ 
L^q\Z^G_*(\underline{E}G)\adj \ar[u]^\cong \ar[r] & L^q_*(\Z G)\adj \ar[u] & L_*(C^*(G;\R))\adj \ar[r] & L_*(C^*_r(G;\R))\adj}\]
commute as well where $L^q\Z$ is quadratic $L$-theory of the integers. Thus we can paste these diagrams together to obtain the following theorem.

\begin{Thm}\label{BC und FJ}
Let $G$ be a countable discrete group. Then the following diagram is commutative.
\[\xymatrix@C=1.5cm{ KO^G_*(\underline{E}G)\adj \ar[rr]^-{\BC\adj} \ar[d]_\cong^\tau & & KO_*(C^*_r(G;\R))\adj  \ar[d]^-\cong_\tau \\ L\R^G_*(\underline{E}G)\adj \ar[r]^-{\FJ\adj} & L_*(\R G)\adj \ar[r] & L_*(C^*_r(G;\R))\adj \\ L^q\Z^G_*(\underline{E}G)\adj \ar[u]^\cong \ar[r]_-{\FJ\adj} & L^q_*(\Z G)\adj \ar[u] &}\]
\end{Thm}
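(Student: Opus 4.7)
The plan is to assemble the commutative diagram from three pieces: the top square relating $KO$ with $L\R$-assembly via the equivalence from Theorem C, the bottom square relating $L\R$-assembly with $L^q\Z$-assembly after inverting $2$, and the right-hand middle comparison between the full and reduced group $C^*$-algebras. Each of these will be produced independently and then pasted together.

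First I would compose the functor $\Gpd_2^\omega \to \KK^\R_\infty$ (from the Proposition preceding the theorem) with each of $KO\adj$ and $L\adj$. By Theorem C these two composites are equivalent as functors $\Gpd_2^\omega \to \Sp$ via the natural transformation $\tau$. To apply the Davis--L\"uck machinery I need a zig-zag of natural weak equivalences between honest functors to a model category of spectra; for this I invoke that $\Sp$ is the $\infty$-category of a combinatorial model category $\Sp_1$, so \cite[Proposition 4.2.4.4]{LurieHTT} lifts the $\infty$-categorical equivalence to strict functors $\Gpd^\omega \to \Sp_1$ together with a zig-zag of natural weak equivalences between them. The formalism of \cite{DavisLueck} then produces identifications of the induced $G$-equivariant homology theories and, crucially, of their assembly maps. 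This identifies the top square of the diagram as the $KO\adj$-assembly map in one row and the $L\R\adj$-assembly map in the other row, connected by vertical equivalences induced by $\tau$; the relevant coefficient on the right is $KO(C^*(G;\R))\adj \simeq L(\R G)\adj$, i.e.\ the \emph{full} group $C^*$-algebra version of BC, which is well-known to factor the reduced BC assembly map via the quotient $C^*(G;\R) \to C^*_r(G;\R)$.

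Next I handle the bottom square. The inclusion $\Z \hookrightarrow \R$ of involutive rings induces a natural comparison map $L^q\Z \to L\R$ of functors on groupoids, and after inverting $2$ the canonical map $L^q \to L$ (symmetric vs quadratic) is an equivalence. By \cite[Proposition 2.18]{LueckReich} the change of family from virtually cyclic to finite subgroups induces an equivalence on the source after inverting $2$, so one has a commutative square with the FJ $L^q\Z$-assembly on the bottom and the $L\R$-assembly on top, with the left vertical map an equivalence. Finally the right-hand middle square, comparing $KO_*(C^*(G;\R))\adj \to KO_*(C^*_r(G;\R))\adj$ with $L_*(C^*(G;\R))\adj \to L_*(C^*_r(G;\R))\adj$, commutes by naturality of $\tau$ applied to the surjection $C^*(G;\R) \to C^*_r(G;\R)$ in $\rc$, and both vertical maps are equivalences again by Theorem C (applied to these actual $C^*$-algebras).

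Pasting the three squares together yields the displayed diagram. The main obstacle I anticipate is the first step: the Davis--L\"uck construction of assembly maps is formulated in terms of strict functors from a category of $G$-orbits to a $1$-category of spectra, whereas Theorem C only provides an $\infty$-categorical equivalence of lax symmetric monoidal functors. The essential work is to faithfully translate the $\infty$-categorical equivalence $KO\adj \simeq L\adj$ on real $C^*$-algebras into a zig-zag of natural weak equivalences of strict functors from groupoids to $\Sp_1$, so that the machinery of \cite{DavisLueck} delivers an actual identification of assembly maps rather than a mere equivalence of their homotopy fibers. Once this translation is carried out, the rest of the argument is naturality and routine pasting.
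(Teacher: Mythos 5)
Your proposal follows the paper's argument almost step for step: composing the groupoid functor into $\KK_\infty^\R$ with $KO\adj$ and $L\adj$, using Theorem C for the equivalence of these composites, rectifying to a zig-zag of strict natural weak equivalences via \cite[Proposition 4.2.4.4]{LurieHTT}, feeding this into the Davis--L\"uck assembly machinery, and then pasting with the factorization through the full group $C^*$-algebra, the change of family from \cite[Proposition 2.18]{LueckReich}, and the $L^q\Z$-versus-$L\R$ comparison. So the approach, the decomposition of the diagram, and the key lemmas all coincide with the paper's.

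One sentence is wrong as written, however: you claim the relevant coefficient on the right is $KO(C^*(G;\R))\adj \simeq L(\R G)\adj$. Theorem C identifies $KO(C^*(G;\R))\adj$ with $L(C^*(G;\R))\adj$, \emph{not} with $L(\R G)\adj$; the comparison map $L(\R G)\adj \to L(C^*(G;\R))\adj$ induced by completion is precisely the map whose bijectivity is the content of the completion conjecture formulated right after Theorem D, and it is not an equivalence one gets for free. The reason the left-hand source terms can nonetheless be identified with the Farrell--Jones homology $L\R^G_*(\underline{E}G)\adj$ is that $\R H = C^*(H;\R)$ for finite $H$, so the two equivariant theories (group-ring coefficients versus groupoid $C^*$-algebra coefficients) agree on $\underline{E}G$; the target only receives a natural map from $L_*(\R G)\adj$, which is why that term sits in the middle of the statement's diagram rather than being collapsed into an isomorphism. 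Your treatment of the right-hand middle square, where you correctly write $KO(C^*(G;\R))\adj \simeq L(C^*(G;\R))\adj$, shows you have the right picture; this is a local slip rather than a conceptual gap, and with that sentence repaired the proposal agrees with the paper's proof.
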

Note that the upper horizontal map $\BC\adj$ in this diagram is the 2-inverted version of the map which is conjectured to be an isomorphism in the Baum-Connes conjecture.
We obtain the following direct consequence.
\begin{Cor}
Let $G$ be a countable discrete group. Suppose the real Baum-Connes map is injective after inverting $2$. Then so is the Farrell-Jones map in quadratic $L$-theory for the ring $\Z$.
\end{Cor}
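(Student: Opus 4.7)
The argument is a direct diagram chase in the commutative diagram supplied by Theorem~D. The plan is: start with an element $x \in L^q\Z^G_*(\underline{E}G)\adj$ in the kernel of the bottom horizontal map $\FJ\adj$, transport it up through the left column to an element $w \in KO^G_*(\underline{E}G)\adj$, show that $\BC\adj(w) = 0$ using commutativity, and invoke the hypothesis to conclude that $w$ — and hence $x$ — vanishes.

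To carry this out, write $\alpha \colon L^q\Z^G_*(\underline{E}G)\adj \xrightarrow{\cong} L\R^G_*(\underline{E}G)\adj$ and $\gamma \colon KO^G_*(\underline{E}G)\adj \xrightarrow{\cong} L\R^G_*(\underline{E}G)\adj$ for the two left-column isomorphisms, $\delta\colon KO_*(C^*_r(G;\R))\adj \xrightarrow{\cong} L_*(C^*_r(G;\R))\adj$ for the right-column isomorphism, and $\beta\colon L^q_*(\Z G)\adj \to L_*(\R G)\adj$ for the (not necessarily iso) middle vertical map. Given $x$ with $\FJ\adj(x) = 0$, commutativity of the lower-left square yields $\FJ\adj(\alpha(x)) = \beta(\FJ\adj(x)) = 0$ in $L_*(\R G)\adj$, so a fortiori the image of $\alpha(x)$ in $L_*(C^*_r(G;\R))\adj$ vanishes. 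Setting $w = \gamma^{-1}(\alpha(x))$, commutativity of the upper square of Theorem~D then gives $\delta(\BC\adj(w)) = 0$; since $\delta$ is an isomorphism, $\BC\adj(w) = 0$, and the injectivity hypothesis forces $w = 0$. Applying $\gamma$ and then $\alpha^{-1}$ yields $x = 0$, as desired.

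Granting Theorem~D, there is no substantive obstacle; the argument is essentially formal. The only point worth underlining is that the $\FJ\adj$ appearing in the bottom row of the diagram does coincide with the $2$-inverted Farrell--Jones assembly map for quadratic $L$-theory of $\Z$ that one wishes to control. This identification is already recorded in the discussion preceding Theorem~D via \cite[Proposition 2.18]{LueckReich}, which allows one to pass between the classifying spaces for virtually cyclic and for finite subgroups of $G$ after inverting $2$.
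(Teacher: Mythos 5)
Your proof is correct, and it is precisely the argument the paper has in mind: the paper presents this corollary as a "direct consequence" of the commutative diagram in Theorem~D with no further elaboration, and your diagram chase is the implicit verification, carried out carefully with the right attention to which vertical maps are isomorphisms and which is not. Your closing remark about the identification of the bottom-row $\FJ\adj$ with the usual quadratic Farrell--Jones assembly map after inverting $2$ (via \cite[Proposition 2.18]{LueckReich}) is also exactly the point the paper relies on in the discussion preceding Theorem~D.
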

\begin{Rmk}
Recall that the real Baum-Connes assembly map is injective in all degrees after inverting 2 if and only if the complex Baum-Connes assembly map is injective in all degrees after inverting 2, \cite[Corollary 2.13]{Schick}. Thus the above corollary remains true if the real Baum-Connes map is replaced by the complex one provided one deals with all degrees at once. In the above version injectivity is inherited for each single degree separately.
\end{Rmk}

We now formulate the \emph{completion conjecture in $L$-theory}.
\begin{Conj}
The map induced by the completion 
\[ L_*(\R G)\adj \to L_*(C^*_r(G;\R))\adj \]
is an isomorphism.
\end{Conj}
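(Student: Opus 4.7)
The plan is to use Theorem C to convert the conjecture into a mixed algebraic/topological comparison, and then attack it via a density/Karoubi-type argument. By Theorem C, $L_*(C^*_r(G;\R))\adj \cong KO_*(C^*_r(G;\R))\adj$ as modules over $L\R\adj \simeq KO\adj$, so the completion conjecture amounts to showing that algebraic symmetric $L$-theory of the group ring $\R G$ matches topological $KO$-theory of the reduced group $C^*$-algebra, after inverting $2$. Since both sides are periodic after inverting $2$ and carry compatible module structures over $L\R\adj$, the verification reduces to degrees $0$ and $-1$.

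I would factor the comparison through the maximal $C^*$-algebra as $\R G \to C^*_{\max}(G;\R) \to C^*_r(G;\R)$. The second arrow becomes an equivalence in $KO\adj$, hence (again by Theorem C) in $L\adj$, under standard hypotheses such as $K$-amenability, and more generally under the Baum-Connes conjecture for $G$. It remains to handle the first arrow: one would like a density theorem asserting that $L(\R G)\adj \to L(C^*_{\max}(G;\R))\adj$ is an equivalence. For this I would use the explicit description of elements of $L_0$ in terms of non-degenerate hermitian forms on finitely generated projective modules and Karoubi's lemma, as in the proof of Theorem 1.12, combined with the excision package of Section 4 and \cref{L-theory exact after inverting 2}. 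The aim is to show that every class on the $C^*$-side is representable by a form defined already over $\R G$, up to $2$-torsion, by exploiting spectral-calculus and positivity arguments that transfer from the $C^*$-setting to a suitable spectrally invariant dense $^*$-subalgebra sitting between $\R G$ and $C^*_{\max}(G;\R)$.

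The main obstacle is precisely this density step. Unlike $C^*$-algebras, the ring $\R G$ is neither Noetherian nor closed under holomorphic functional calculus, so its algebraic $L$-theory is sensitive to genuinely non-topological information, and Karoubi's uniqueness-of-positive-form argument has no direct algebraic analog. For amenable, or more generally $K$-amenable, groups one can plausibly push the plan through by interposing a smooth group algebra; but for general countable discrete $G$ the completion conjecture is entangled with Farrell-Jones and Baum-Connes themselves, and I would not expect an unconditional proof without substantial new input on the algebraic $K$- and $L$-theory of group rings. In this sense the conjecture is not a target for a self-contained proof sketch but rather a precise formulation of the \emph{defect} between the two assembly-map pictures made possible by the functorial equivalence $KO\adj \simeq L\adj$ of Theorem C.
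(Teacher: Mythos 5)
The statement you were asked to address is labeled a \emph{Conjecture} in the paper, and the paper gives no proof of it. What the paper does do is (a) state it as the precise missing ingredient relating the Baum--Connes and Farrell--Jones pictures after inverting $2$, (b) show by an explicit computation that it \emph{fails integrally} (for $G=\Z^n$: the Shaneson splitting shows $L_0(\R\Z^n)$ is torsion free, while $L_0(C^*_r(\Z^n;\R))\cong KO_0(C^*_r(\Z^n;\R))\cong KO_0(B\Z^n)$ contains $2$-torsion once $n$ is large enough, since $T^n$ splits stably), and (c) record the ``3-for-2'' property: any two of BC$\adj$, FJ$\adj$ over $\R$, and the completion conjecture imply the third. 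Your closing paragraph correctly arrives at essentially this last point, so your overall assessment that this is a formulation of the defect rather than a target for a self-contained proof matches the paper's stance. You do, however, omit the $\Z^n$ example, which is the one concrete thing the paper establishes about this conjecture.

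The intermediate strategy you sketch before reaching that conclusion does not genuinely narrow the problem, and it is worth being precise about why. Factoring $\R G \to C^*_{\max}(G;\R)\to C^*_r(G;\R)$ shifts the difficulty without reducing it: the ``density step'' $L(\R G)\adj\to L(C^*_{\max}(G;\R))\adj$ is not a subsidiary lemma but is the completion conjecture itself (for the max completion), and the paper's proof of $K_0\cong L_0$ (\cref{K groups isomorphic to L groups}) relies essentially on $C^*$-functional calculus and positivity to normalize hermitian forms --- tools unavailable over $\R G$, so Karoubi's uniqueness argument has no algebraic analogue there, as you yourself note. Interposing a spectrally invariant dense subalgebra only helps if one already knows $L$-theory is insensitive to the completion, which begs the question. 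Separately, the claim that $C^*_{\max}\to C^*_r$ becomes an equivalence in $KO\adj$ ``under Baum--Connes'' is not quite right: the relevant hypothesis is $K$-amenability; the Baum--Connes conjecture as formulated in the paper concerns only the reduced algebra and does not by itself compare $C^*_{\max}$ with $C^*_r$. Finally, a small point: $L\R\adj\simeq KO\adj$ is $4$-periodic, not $2$-periodic, so a reduction by periodicity would have to treat degrees $0,1,2,3$, not just $0$ and $-1$.
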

\begin{Rmk}
We observe that the map
\[ L_0(\R G) \to L_0(C^*_r(G;\R)) \]
is in general not an isomorphism. To see this we consider $G = \Z^n$. We use the Shaneson splitting to calculate $L_0(\R \Z^n)$ and see that it is torsion free. Then we observe that \cref{K groups isomorphic to L groups} remains valid in degree $0$ for $R^*$-algebras. We can then use the Baum-Connes conjecture (which is confirmed for free abelian groups) to calculate that 
\[ L_0(C^*_r(\Z^n;\R)) \cong KO_0(C^*_r(\Z^n;\R)) \cong KO_0(B\Z^n)) \]
and notice that since $T^n \simeq B\Z^n$ splits stably we see $2$-torsion in these groups coming from the $2$-torsion of $\pi_*(KO)$ as soon as $n$ is big enough.
Thus the completion conjecture without inverting $2$ is not valid.
\end{Rmk}

We conclude with the observation that from the commutative diagram of \cref{BC und FJ} we see that there is a 3-for-2 property for the following statements:
\begin{enumerate}
\item[(i)] The Baum-Connes conjecture holds after inverting $2$ for the discrete group $G$,
\item[(ii)] The Farrell-Jones conjecture over $\R$ holds after inverting $2$ for the discrete group $G$,
\item[(iii)] The completion conjecture in $L$-theory holds for the discrete group $G$.
\end{enumerate}

%%%%%%%%%%%%%%%%%%%%%%%%%%%%%%%%%%%%%%%%%%%%%%%%%%%%%%%%%%%%%%%%%%%%%%%%%%%%%%%%%%%%%%%%%%%%%%%%%%%%%%%%%%%%%%%%%%%%%%%%%%%%

\section{Integral maps between $KU$ and $L\C$}\label{integral maps}

We have shown that there exists a natural map $\tau\colon k \to \ell$ which induces an equivalence between the periodic versions after inverting $2$. One could hope that there also exists an integral map between the periodic versions which becomes an equivalence after inverting $2$ or a map in the other direction. It turns out that this is not the case.
This is a consequence of the following theorem.

\begin{Thm}\label{connectedness-of-mapping-spaces}
We have that
\[ [L\C,KU] = [KU,L\C] = [\ell\C,KU] = [\ell\C,ku] = 0 \]
i.e. any such map is null homotopic.
\end{Thm}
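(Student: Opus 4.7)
The strategy is to reduce the vanishing to four separate computations by localizing at and away from the prime $2$, using the arithmetic fracture square. The key structural inputs are already encoded in the remark following Corollary~\ref{equivalence for C}:
\begin{enumerate}
\item[(a)] After inverting $2$, the map $\tau$ (respectively its connective version) induces equivalences $L\C[\tfrac{1}{2}] \simeq KU[\tfrac{1}{2}]$ and $\ell\C[\tfrac{1}{2}] \simeq ku[\tfrac{1}{2}]$ (the map $\tau_\C(\beta^n)=2^n b^n$ becomes an isomorphism once $2$ is a unit).
\item[(b)] At the prime $2$ the formal group law is additive, so $L\C_{(2)}$ is a generalized Eilenberg--Mac Lane spectrum; since its homotopy groups $\Z_{(2)}[b^{\pm 1}]$ have at most one nonzero entry in each degree, the natural wedge-to-product map is an equivalence and hence $L\C_{(2)} \simeq \bigvee_{n \in \Z} \Sigma^{2n} H\Z_{(2)}$. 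Taking connective covers yields $\ell\C_{(2)} \simeq \bigvee_{n \geq 0} \Sigma^{2n} H\Z_{(2)}$.
\end{enumerate}

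For each of the four groups I would run the same argument, so I explain it for $[\ell\C, ku]$. The arithmetic fracture square presents $\ell\C$ as a pullback (equivalently, in the stable setting, as the object whose $\mathrm{Map}(-, ku)$ is a pullback of $\mathrm{Map}(\ell\C_{(2)}, ku) \to \mathrm{Map}(\ell\C_\Q, ku) \leftarrow \mathrm{Map}(\ell\C[\tfrac{1}{2}], ku)$). The associated Mayer--Vietoris long exact sequence
\[
[\ell\C_\Q, ku] \to [\ell\C_{(2)}, ku] \oplus [\ell\C[\tfrac{1}{2}], ku] \to [\ell\C, ku] \to [\ell\C_\Q, \Sigma ku]
\]
reduces the problem to computing the three outer terms and the rational gluing map.

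To analyze the $2$-local side I would use (b) to write
\[
[\ell\C_{(2)}, ku] \cong \prod_{n \geq 0} [H\Z_{(2)}, \Sigma^{-2n} ku_{(2)}],
\]
and to compute the factors I would run the Atiyah--Hirzebruch spectral sequence for $ku^*(H\Z)$ starting from the fact, verifiable by inspection of the Steenrod algebra and the Bockstein sequence, that $H^{2k}(H\Z;\Z) = 0$ for $k \geq 1$. This makes the spectral sequence degenerate in the relevant range and reduces each factor to the question of when a multiple of the identity of $H\Z$ lifts along $ku \to H\Z$. The obstruction is multiplication by the $k$-invariant $\beta_{ku} \in [H\Z, \Sigma^3 ku]$, whose order is $p$ at each prime $p$: this follows from the Adams splitting of $ku_{(p)}$ and the identification of the $k$-invariant of the Adams summand with the Milnor primitive $Q_1$ (of order $p$) at odd primes, together with the classical computation at $p = 2$. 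Globally the only integer divisible by every prime is zero, so $[H\Z, ku] = 0$. By Bott shift the same reasoning yields $[H\Z, \Sigma^{-2n} ku] = 0$ for every $n$, and hence $[\ell\C_{(2)}, ku]$ injects into its rationalization and the relevant two-local contribution is completely controlled by the integer divisibility of the components. For the $2$-inverted side, using (a) the group $[\ell\C[\tfrac{1}{2}], ku]$ becomes the ring of stable operations on $ku[\tfrac{1}{2}]$, whose image in $\prod_n \Z[\tfrac{1}{2}]$ is described by the Adams congruences at odd primes.

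The final step is the gluing. A class in $[\ell\C, ku]$ produces a pair $(f_{(2)}, f[\tfrac{1}{2}])$ whose restrictions to $\ell\C_\Q$ agree. Translating both sides into the common rational invariant $(k_n)_n$ where $f(b^n) = k_n\beta^n$, the two-local side forces a certain integer divisibility of $k_n$ coming from the order of $\beta_{ku}$, while the $2$-inverted side forces the sequence $(2^n k_n)_n$ to satisfy Adams congruences at every odd prime, and together with the global obstruction for $H\Z \to ku$ derived above these force $k_n = 0$ for all $n$. The same combination is then used to kill the contribution from $[\ell\C_\Q, \Sigma ku]$, yielding $[\ell\C, ku] = 0$. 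The other three statements follow by straightforward variants: $[\ell\C, KU]$ is an appropriate limit of $[\ell\C, \Sigma^{-2n} ku]$; the statement for $L\C$ follows either by running the same argument for the periodic GEM splitting of $L\C_{(2)}$ or by writing $L\C = \ell\C[b^{-1}]$ and exploiting the multiplicative structure and the already-established $[\ell\C, ku] = 0$; and the groups $[KU, L\C]$ are handled by the dual computation, the key input being again that the integral $k$-invariants of $ku$ and of $\ell\C$ are incompatible.

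The main obstacle will be the careful bookkeeping in the gluing step, in particular verifying that the pair $(2\psi^2, (1,1,\ldots))$---which superficially appears compatible---is in fact ruled out by the global obstruction for maps $H\Z \to ku$ once one inspects the image of the boundary map in the Mayer--Vietoris sequence; the identification of this image with the global order of $\beta_{ku}$ is where the argument sits.
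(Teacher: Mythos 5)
Your plan is a genuinely different route from the paper's. You and the paper both exploit the prime-$2$ arithmetic fracture square and the fact that $L\C_{(2)}$ is a generalized Eilenberg--Mac Lane spectrum (which the paper gets from the Sullivan--Ranicki orientation $\MSO \to L\Z$), but after that the strategies diverge completely. The paper fractures the \emph{smash product} $L\C \otimes KU$ rather than the source: since $(L\C\otimes KU)_{(2)}$ is a module over $\HZ\otimes KU$, which is rational, the $2$-local leg of the fracture square is already rational, hence $L\C \otimes KU \simeq (L\C\otimes KU)\adj$; a module adjunction then gives $\Map(L\C, KU)\simeq\Map(L\C\adj, KU)$, and similarly for the other three groups (with $[\ell\C,ku] \simeq [\ell\C\adj,KU]$ via the universal property of connective covers). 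The finishing move is Anderson self-duality of $KU$ and $L\C$: the universal-coefficient short exact sequence expresses $KU^0(L\C\adj)$ via $\Hom(KU_0 L\C\adj, \Z)$ and $\Ext^1_\Z(KU_{-1}L\C\adj, \Z)$; the $\Ext$ term dies because $KU\otimes L\C$ is even (which uses Landweber exactness of $KU$), and the $\Hom$ term dies because $KU_0 L\C\adj$ is a $\Z\adj$-module and $\Hom_\Z(M,\Z)=0$ for any such $M$. This sidesteps all $k$-invariant bookkeeping, Adams congruences, phantom-map issues, and the gluing step entirely.

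Your sketch as written has gaps that would need real work to fill. First, the rational legs $[\ell\C_\Q, ku]$ and $[\ell\C_\Q, \Sigma ku]$ in your Mayer--Vietoris sequence are not addressed and are not automatically zero: $\ell\C_\Q$ is rational but $ku$ is not, so these groups decompose into pieces of the form $[H\Q, \Sigma^m ku]$ whose vanishing is a nontrivial statement about phantom maps out of $H\Q$, and must be proved, not assumed. Second, your $k$-invariant/divisibility argument controls, at best, the image of $[\ell\C_{(2)}, ku]$ in $\prod_n\Hom(\pi_{2n}\ell\C_{(2)},\pi_{2n}ku)$; to deduce vanishing of the full group you would also have to control the $\lim^1$ term in the Milnor sequence over the Postnikov tower of $ku$, i.e.\ rule out phantom maps on the $2$-local side too, which you leave implicit (indeed, ``Globally the only integer divisible by every prime is zero'' only kills the $\pi_0$-image, not the group). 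Third, the step you yourself flag as the crux---the gluing, including ruling out the pair $(2\psi^2,(1,1,\ldots))$---is explicitly left open; without it the plan does not close. The paper's Anderson-duality argument is shorter precisely because it replaces all three of these delicate points with a single $\Hom/\Ext$ vanishing.
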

The proof of this theorem will proceed in several steps. We need a couple of lemmas to get started. 
A first observation is the following general
\begin{Prop}
Let $R$ be a ring spectrum and $M$ be an $R$-module spectrum. If $R$ is admits the structure of an $\HZ$-module then so does $M$.
\end{Prop}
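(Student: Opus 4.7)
The strategy is to exhibit $M$ as a retract of a spectrum that is manifestly an $\HZ$-module, and then to invoke the fact that $\HZ$-modules are closed under retracts in $\Sp$. For the first step I would use the unit axioms twice. Let $a\colon R \wedge M \to M$ denote the action of $R$ on $M$, let $u\colon \S \to R$ denote the unit of $R$, and let $\alpha\colon \HZ \wedge R \to R$ denote the given $\HZ$-module structure on $R$. The unit axiom for $M$ reads $a \circ (u \wedge \id_M) = \id_M$, so $M$ is a retract of $R \wedge M$ in $\Sp$. Applying the unit axiom for $R$ as an $\HZ$-module (via the unit $\S \to \HZ$) exhibits $R$ as a retract of $\HZ \wedge R$, and smashing with $M$ exhibits $R \wedge M$ as a retract of $\HZ \wedge R \wedge M$. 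Composing the two retractions, $M$ is a retract of $\HZ \wedge (R \wedge M)$, which is tautologically an $\HZ$-module via its first smash factor.

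It remains to observe that any retract (taken in $\Sp$) of an $\HZ$-module admits an $\HZ$-module structure. I would argue this via the standard characterization of $\HZ$-modules as generalized Eilenberg-MacLane spectra, i.e.\ spectra whose Postnikov $k$-invariants all vanish. Since the formation of Postnikov sections and $k$-invariants is natural, a retract of a generalized Eilenberg-MacLane spectrum has each of its $k$-invariants realized as a retract of $0$, so they all vanish; the retract is therefore itself a generalized Eilenberg-MacLane spectrum and thus carries an $\HZ$-module structure.

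The second step is the only real content. The alternative, more $\infty$-categorical route would be to promote the idempotent $e$ on $\HZ \wedge R \wedge M$ with image $M$ to an $\HZ$-linear idempotent in $\mathrm{Mod}_\HZ$, and then invoke the fact that idempotents split in the stable $\infty$-category $\mathrm{Mod}_\HZ$; the splitting would then provide an $\HZ$-module refinement of $M$. Either route ultimately reduces to the same closure property of $\HZ$-modules under retracts, which is the main obstacle.
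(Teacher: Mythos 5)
Your proof is correct and follows the same basic strategy as the paper: exhibit $M$ as a retract of an $\HZ$-module and invoke closure of $\HZ$-modules (equivalently, generalized Eilenberg-MacLane spectra) under retracts. Two small differences are worth noting. First, the paper realizes $M$ as a retract of $R \otimes M$ directly and simply observes that $R \otimes M$ is already an $\HZ$-module because $R$ is — there is no need for the extra factor of $\HZ$ you introduce by smashing again, since $\mathrm{Mod}_{\HZ}$ is tensored over $\Sp$. Your detour through $\HZ \wedge R \wedge M$ is harmless but redundant. Second, for the retract-closure step, the paper does not argue via $k$-invariants; it just notes that if $Y$ is a retract of a generalized Eilenberg-MacLane spectrum $X$, then $\pi_*(Y)$ is a retract of $\pi_*(X)$ and the composite $Y \to X \to \mathrm{H}\pi_*(X) \to \mathrm{H}\pi_*(Y)$ is checked to be an equivalence on homotopy. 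Your $k$-invariant argument is valid (naturality of the Postnikov tower gives $k_n^Y = (\Sigma H\pi(r)) \circ k_n^X \circ (\tau_{\leq n} i)$, so $k_n^X = 0$ forces $k_n^Y = 0$), and so is your proposed alternative via idempotent-splitting in $\mathrm{Mod}_{\HZ}$, which is arguably the slickest of the three since it avoids the generalized Eilenberg-MacLane characterization entirely.
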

\begin{proof}
We first notice that it is equivalent for a spectrum $X$ to admit the structure of an $\HZ$-module and to be equivalent to the generalized Eilenberg-MacLane spectrum on its homotopy groups.
The module multiplication map and the unit of the ring spectrum give a factorization of the identity of $M$ 
\[\xymatrix{M \ar[r] & R\otimes M \ar[r] & M}\]
which shows that $M$ is a retract of $R\otimes M$. Since $R$ is an $\HZ$-module, so is $R\otimes M$. By the above this implies that $R\otimes M$ is a generalized Eilenberg-MacLane spectrum. So the proposition follows if we show that the category of generalized Eilenberg-MacLane spectra is closed under retracts. It follows from the functoriality of homotopy groups that if $Y$ is a retract of $X$ then $\pi_*(Y)$ is a retract of $\pi_*(X)$ and it is easy to see that the map $Y \to X \to \mathrm{H}\pi_*(X) \to \mathrm{H}\pi_*(Y)$ is an equivalence.
\end{proof}

\begin{Cor}\label{L-theory-splits-2-locally}
For all $A \in \cc$, the spectrum $LA_{(2)}$ admits the structure of an $\HZ$-module.
\end{Cor}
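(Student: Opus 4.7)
The plan is to apply the preceding proposition, taking $R = L\C_{(2)}$ and $M = LA_{(2)}$. First I would exhibit $LA$ as a module spectrum over the $\e_\infty$-ring spectrum $L\C$. Since $\C$ is the tensor unit of $\cc$, every $A \in \cc$ is tautologically a $\C$-algebra in $\N\cc$, and the lax symmetric monoidal structure of $L\colon \N\cc \to \Sp$ (Proposition \ref{propositionlaxL}) gives a multiplication
\[ L\C \otimes LA \to L(\C \otimes A) \xrightarrow{\simeq} LA \]
which makes $LA$ an $L\C$-module (and $L\C$ itself an $\e_\infty$-ring, as already recorded in Corollary \ref{equivalence for C}). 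Localizing at $2$, $LA_{(2)}$ becomes a module over the $\e_\infty$-ring $L\C_{(2)}$.

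By the previous proposition, it therefore suffices to show that $L\C_{(2)}$ admits the structure of an $\HZ$-module. This is precisely the content of the remark following Corollary \ref{equivalence for C}: the composite $\MU \to ku \xrightarrow{\tau_\C} \ell\C \to L\C$ furnishes $L\C$ with a complex orientation whose formal group law is $x+y+2xy$, and 2-locally this formal group law is isomorphic to the additive one by means of the logarithm (which exists after inverting nothing more than odd primes, i.e.\ 2-locally). Thus $L\C_{(2)}$ is an even periodic, complex orientable ring spectrum with additive formal group law, which forces it to be a generalized Eilenberg-MacLane spectrum on its homotopy groups $\Z_{(2)}[b^{\pm 1}]$; in particular it is an $\HZ$-module.

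The main obstacle is this last step, namely passing from "additive formal group law 2-locally" to "$\HZ$-module structure". I expect this to be the only nontrivial point, and the cleanest way to discharge it is to observe that an even periodic $\e_\infty$-ring spectrum whose Hurewicz image in its own complex orientation is the additive formal group law has Postnikov tower that splits 2-locally, because the connecting k-invariants in $H^{n+2}(\mathrm{H}\pi_n; \pi_{n+1})$ are detected by the formal group law and therefore vanish when the latter is additive. Once this is in place, applying the preceding proposition to the $L\C_{(2)}$-module $LA_{(2)}$ yields the corollary.
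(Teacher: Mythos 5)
Your overall plan matches the paper's: invoke the preceding proposition by exhibiting $LA_{(2)}$ as a module over some ring spectrum already known to be an $\HZ$-module. But you choose $L\C_{(2)}$ as the base ring, whereas the paper instead observes that $LA$ is a module over $L\Z$ (Proposition \ref{properties of L-theory}(4)), hence over $\MSO$ via the Sullivan--Ranicki orientation $\MSO \to L\Z$, and then appeals to the Taylor--Williams theorem that $\MSO_{(2)}$ is an $\HZ$-module. The advantage of the paper's route is that the $\HZ$-module structure on $\MSO_{(2)}$ is a classical, citable result; your route requires proving from scratch that $L\C_{(2)}$ is an $\HZ$-module, and this is where your argument has a genuine gap.

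The gap is in the final paragraph. You claim that an even periodic $\e_\infty$-ring spectrum whose formal group law is $2$-locally isomorphic to the additive one must split as a wedge of Eilenberg--MacLane spectra, because ``the connecting k-invariants in $H^{n+2}(\mathrm{H}\pi_n;\pi_{n+1})$ are detected by the formal group law.'' This is not correct as stated: the $k$-invariants of $E$ live in $H^{n+2}(\tau_{\leq n}E;\pi_{n+1}E)$, not in $H^{n+2}(\mathrm{H}\pi_n E;\pi_{n+1}E)$ (these coincide only for the first $k$-invariant), and there is no general theorem identifying these obstructions with coefficients of the formal group law past the leading term. Relatedly, having an additive formal group law on homotopy groups does not by itself produce a factorization of the orientation $\MU \to L\C_{(2)}$ through $\HZ$ at the level of spectra; this is precisely the kind of statement that holds for Landweber exact theories, and the paper explicitly notes that $L\C$ is \emph{not} Landweber exact. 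The remark following Corollary \ref{equivalence for C} presents the formal group law calculation as \emph{consistent with} the known fact that $L\C_{(2)}$ is a GEM, not as a proof of it. To close the gap you would need either to cite that fact directly (it ultimately rests on the same classical splitting results used for $\MSO_{(2)}$, via $L\Z_{(2)}$), or to carry out a genuine obstruction-theoretic argument, which is considerably more involved than the sentence you have written.
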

\begin{proof}
First we need to recall that $L\Z$ is an algebra over $\MSO$ due to the Sullivan-Ranicki orientation $\MSO \xrightarrow{\sigma} L\Z$. In particular for any $A \in \cc$ the spectrum $LA$ is a module over $\MSO$ und thus $LA_{(2)}$ is a module over $\MSO_{(2)}$ which admits the structure of an $\HZ$-module, see e.g. \cite[Theorem A]{Williams}.
\end{proof}
\begin{Rmk}
The $\HZ$-module structure on $LA_{(2)}$ is not canonical, but for our purposes it suffices to choose some $\HZ$-module structure for each algebra $A$.
\end{Rmk}

\begin{Cor}\label{not-equivalent}
The spectra $KU$ and $L\C$ are \emph{not} equivalent, although their homotopy groups are naturally isomorphic.
\end{Cor}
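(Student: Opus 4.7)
The plan is to derive a contradiction by distinguishing the two spectra after $2$-localization. By \cref{L-theory-splits-2-locally}, $L\C_{(2)}$ carries the structure of an $\HZ_{(2)}$-module, and since its homotopy groups $\Z_{(2)}[b^{\pm 1}]$ are free over $\Z_{(2)}$ the Postnikov $k$-invariants vanish; thus $L\C_{(2)}$ is equivalent to a product of shifts of $\HZ_{(2)}$, i.e.\ a generalized Eilenberg-MacLane spectrum. An equivalence of spectra $KU \simeq L\C$ would therefore imply that $KU_{(2)}$ is a generalized Eilenberg-MacLane spectrum as well.

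The remaining step is to rule this out by invoking a classical feature of complex $K$-theory that is incompatible with being an $\HZ_{(2)}$-module. The cleanest route I see is via the $v_1$ self-map $v_1 \colon \Sigma^2 \S/2 \to \S/2$: it acts on $KU/2 = KU \wedge \S/2$ as multiplication by the mod-$2$ reduction of the Bott element $\beta$ and is in particular an equivalence, whereas on any $\mathrm{H}\F_2$-module it factors through $\pi_2(\mathrm{H}\F_2) = 0$ and hence acts as zero. Since the mod-$2$ reduction of any $\HZ_{(2)}$-module is an $\mathrm{H}\F_2$-module, a generalized Eilenberg-MacLane structure on $KU_{(2)}$ would force $v_1$ to act trivially on $KU/2$, contradicting Bott periodicity. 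Alternatively one can cite the classical computation $H^*(ku; \F_2) \cong \mathcal{A}//E(Q_0, Q_1)$ as a module over the mod-$2$ Steenrod algebra, on which the Milnor primitive $Q_1 = \Sq^3 + \Sq^2\Sq^1$ annihilates the fundamental class, and contrast this with $H^*(\HZ; \F_2) \cong \mathcal{A}//E(Q_0)$, on which $Q_1$ acts non-trivially.

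The main obstacle is not the argument itself, which is immediate from \cref{L-theory-splits-2-locally} together with the standard fact that $\HZ_{(2)}$-modules with free homotopy groups are generalized Eilenberg-MacLane spectra, but rather selecting a sufficiently elementary classical invariant that distinguishes $KU_{(2)}$ from such a spectrum without going outside the toolkit already in use in the paper.
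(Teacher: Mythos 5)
Your proposal follows essentially the same route as the paper, which simply invokes \cref{L-theory-splits-2-locally} to observe that $L\C_{(2)}$ is an $\HZ$-module and then cites as "well known" that $KU_{(2)}$ is not. You supply a proof of that well-known fact, which is a reasonable thing to want to do.

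Two remarks on the details. First, the intermediate step about free homotopy groups forcing the Postnikov $k$-invariants to vanish is unnecessary and slightly misleading: being an $\HZ$-module already makes a spectrum a generalized Eilenberg--MacLane spectrum regardless of its homotopy groups, which is exactly what the paper's preceding Proposition records (an $\HZ$-module $M$ is a retract of $\HZ \otimes M$, which is a GEM). Second, and more substantively, your primary argument contains an error: at the prime $2$ there is no $v_1$ self-map $\Sigma^2\,\S/2 \to \S/2$. The minimal $v_1$ self-map on the mod-$2$ Moore spectrum is $v_1^4\colon \Sigma^8\,\S/2 \to \S/2$ (Adams' map), the degree-$2(p-1)$ self-map being special to odd primes. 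The argument is easily repaired by using $v_1^4$, which acts on $KU/2$ as multiplication by $\beta^4$ (still an equivalence) while factoring through $\pi_8(\mathrm{H}\F_2) = 0$ on any $\mathrm{H}\F_2$-module. Your alternative via $H^*(ku;\F_2) \cong \mathcal{A}//E(Q_0,Q_1)$ is correct as stated, noting that one passes from $KU_{(2)}$ to its connective cover $ku_{(2)}$ (connective covers of $\HZ$-modules are again $\HZ$-modules) to get a nonzero mod-$2$ cohomology to work with, since $KU \wedge \mathrm{H}\F_2 \simeq 0$.
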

\begin{proof}
It is well known that $KU_{(2)}$ does not admit the structure of an $\HZ$ module.
\end{proof}

\begin{Def}
We call a spectrum $X$ \emph{even} if all its odd homotopy groups vanish. We remark that any such spectrum is complex orientable as the obstructions to being complex oriented lie in odd homotopy groups of $X$. 
\end{Def}
\begin{Lemma}\label{even-homotopy}
If $E$ and $F$ are even homotopy ring spectra and $E$ is Landweber exact, then $E\otimes F$ is even. In particular $KU\otimes L\C$ and $KU\otimes \ell\C$ are even.
\end{Lemma}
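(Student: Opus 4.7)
The plan is to exploit Landweber exactness of $E$ together with complex orientability of $F$ to express $\pi_*(E\otimes F)$ as a tensor product of evenly graded modules over the evenly graded ring $\MU_*$, from which evenness is immediate.

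First I would verify that $\MU_*(F)$ is concentrated in even degrees. Since $F$ is an even homotopy ring spectrum, standard obstruction theory for $F^*(\C P^\infty)$ (whose Atiyah--Hirzebruch spectral sequence collapses because $\C P^\infty$ has only even cells and $F_{\text{odd}} = 0$) produces a complex orientation, hence a ring map $\MU\to F$ in the homotopy category. Using Adams' classical splitting $\MU\otimes \MU \simeq \bigvee_I \Sigma^{2|I|}\MU$ as right $\MU$-modules on even-degree generators $b_I$, tensoring over $\MU$ with $F$ (viewed as an $\MU$-module via the orientation) yields
\[
\MU \otimes F \;\simeq\; (\MU\otimes\MU)\otimes_\MU F \;\simeq\; \bigvee_I \Sigma^{2|I|} F,
\]
so $\MU_*(F) \cong F_*[b_1, b_2, \ldots]$ with $|b_i|=2i$. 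Since $F_*$ is even by hypothesis, $\MU_*(F)$ is concentrated in even degrees.

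Next, Landweber exactness of $E$ means precisely that there is a natural isomorphism $E_*(X) \cong E_* \otimes_{\MU_*} \MU_*(X)$ for every spectrum $X$. Setting $X=F$ gives
\[
\pi_*(E\otimes F) \;\cong\; E_* \otimes_{\MU_*} \MU_*(F).
\]
Both $E_*$ and $\MU_*$ are concentrated in even degrees, and by the previous step so is $\MU_*(F)$; a tensor product of evenly graded modules over an evenly graded ring is again evenly graded, which proves the general statement. For the concrete applications, $KU$ is classically Landweber exact (its formal group law is the multiplicative one, which trivially verifies the Landweber criterion), and the homotopy rings $\pi_*(L\C) = \Z[b^{\pm 1}]$ and $\pi_*(\ell\C) = \Z[b]$ recorded in \cref{properties of L-theory} are evenly graded, so $L\C$ and $\ell\C$ qualify as even homotopy ring spectra. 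Applying the general result with $E = KU$ and $F \in \{L\C, \ell\C\}$ yields the second assertion.

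The main obstacle is really the identification $\MU_*(F) \cong F_*[b_1,b_2,\ldots]$: one must be careful that the Adams splitting used above is valid in the $\infty$-categorical setting for arbitrary complex oriented $F$ (not just finite CW ones), and that only a homotopy ring map $\MU\to F$ is needed, which is produced by Quillen's theorem from any complex orientation. The Landweber step is then entirely formal, as is the parity arithmetic of the tensor product.
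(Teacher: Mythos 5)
Your reduction is the same as the paper's: Landweber exactness gives $E_*F \cong \MU_*F \otimes_{\MU_*} E_*$, so since $E_*$ and $\MU_*$ are even it suffices to show $\MU_*F$ is even. The routes diverge there. The paper computes directly: $\MU_*F = F_*\MU \cong F_*(\BU)$ by the Thom isomorphism for $F$ (available because $F$, being even, is complex orientable), and $F_*(\BU)$ is even since the Atiyah--Hirzebruch spectral sequence collapses, $\BU$ having even integral homology and $F$ even coefficients. You instead invoke the splitting $\MU\otimes\MU\simeq\bigvee_I\Sigma^{2|I|}\MU$ and try to transfer it by forming $(\MU\otimes\MU)\otimes_\MU F$. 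You rightly flag this as the delicate step, but the real issue is not the $\infty$-categorical setting or finiteness of $F$: a homotopy ring map $\MU\to F$ does not equip $F$ with a structured $\MU$-module structure, so the relative tensor product $(\MU\otimes\MU)\otimes_\MU F$ is not actually defined. What \emph{is} true is the classical fact (Adams) that for any complex orientable $F$ one has $F\otimes\MU\simeq\bigvee_I\Sigma^{2|I|}F$, hence $\MU_*F\cong F_*[b_1,b_2,\ldots]$; but the standard proof of that splitting is precisely the paper's Thom-isomorphism-plus-AHSS computation, so once you patch the gap your argument collapses onto the paper's. The rest of your write-up --- the Landweber step, the parity arithmetic of a tensor product of evenly graded modules over an evenly graded ring, and the observation that $KU$ is Landweber exact while $\pi_*(L\C)$ and $\pi_*(\ell\C)$ are even --- is correct and matches the paper.
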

\begin{proof}
Since $E$ is assumed to be Landweber exact we have
\[ E_*F \simeq \MU_*F \otimes_{\MU_*} E_* \]
and since $\MU_*$ and $E_*$ are even it thus suffices to prove that $\MU_*F$ is even. For this we see that
\[ \MU_* F = F_* \MU \cong F_*(\BU) \]
by the Thom-isomorphism for $F$. But since $F$ is even and $\BU$ has even homology the Atiyah Hirzebruch spectral sequence implies that $F_*(\BU)$ is even.
\end{proof}

\begin{Lemma}
Suppose $R$ is a torsion free commutative ring such that the additive and the multiplicative formal group law are isomorphic. Then $R$ is a $\Q$-algebra.
\end{Lemma}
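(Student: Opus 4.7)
The plan is to unwind what it means to give an isomorphism from the additive to the multiplicative formal group law and to extract divisibility consequences from the resulting functional equation.

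First, I would recall that an isomorphism $h$ from the additive formal group law $F_a(x,y) = x+y$ to the multiplicative formal group law $F_m(x,y) = x+y+xy$ over $R$ is a power series $h(x) = \sum_{n\geq 1} c_n x^n \in R[\![x]\!]$ with $c_1 \in R^\times$ satisfying $h(x+y) = h(x) + h(y) + h(x)h(y)$. Setting $g(x) = 1 + h(x) = \sum_{n\geq 0} b_n x^n$, this becomes the exponential-type identity
\[ g(x+y) = g(x)\,g(y), \qquad b_0 = 1,\ b_1 = c_1 \in R^\times. \]

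Next I would compare coefficients. The coefficient of $x^i y^j$ on the left-hand side is $\binom{i+j}{i} b_{i+j}$, while on the right it is $b_i b_j$, giving
\[ \binom{i+j}{i}\, b_{i+j} = b_i b_j \quad \text{for all } i,j\ge 0. \]
Specialising to $j = 1$ and $i = n-1$ yields the recursion $n\, b_n = b_1\, b_{n-1}$ for every $n \geq 1$.

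Finally I would run an induction on $n$ showing that both $b_n$ and the integer $n$ are units in $R$. Since $b_1$ is a unit, assume inductively that $b_{n-1}$ is a unit; then $b_1 b_{n-1}$ is a unit, so $n\, b_n$ is a unit in $R$. In a commutative ring, if a product $ab$ is a unit then each factor is a unit (if $abc = 1$ then $a(bc)=1$), so both $n$ and $b_n$ are units. Hence every positive integer is invertible in $R$, so the ring map $\Z \to R$ factors through $\Q$, which is exactly the statement that $R$ is a $\Q$-algebra. I do not expect any serious obstacle here; the torsion-free hypothesis is not actually needed for the deduction, but it is harmless and presumably present because of the geometric context in which the lemma is applied.
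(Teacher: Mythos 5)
Your proof is correct and is essentially the argument the paper's one-line proof gestures at: the functional equation $g(x+y)=g(x)g(y)$ is precisely "formally writing down the logarithm," and the recursion $n\,b_n = b_1 b_{n-1}$ makes explicit how this forces every positive integer to become a unit in $R$. Your closing observation that the torsion-free hypothesis is redundant is accurate --- indeed $R$ being a $\Q$-algebra already implies torsion-freeness --- and the hypothesis in the paper is simply inherited from the context in which the lemma is invoked (one typically passes from an even ring spectrum to its torsion-free $\pi_0$ before applying it).
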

\begin{proof}
We can formally write down the logarithm of the multiplicative formal group law and see that this forces all primes to act invertibly on $R$.
\end{proof}

\begin{Cor}\label{rational}
The spectrum $KU \otimes \HZ$ is rational.
\end{Cor}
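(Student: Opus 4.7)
The plan is to combine the even lemma, the complex orientation formalism, and the previous ring-theoretic lemma. First I would observe that $\HZ$ is trivially even and that $KU$ is an even, Landweber exact ring spectrum, so by the even homotopy lemma the spectrum $KU \otimes \HZ$ is even and therefore complex orientable. In particular, its homotopy ring $R := \pi_*(KU \otimes \HZ)$ carries a formal group law for each complex orientation it admits.

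Next I would exhibit two orientations: the unit maps $KU \to KU \otimes \HZ$ and $\HZ \to KU \otimes \HZ$ are both maps of homotopy commutative ring spectra, and pushing forward the canonical orientations of $KU$ and $\HZ$ endows $R$ with two orientations whose associated formal group laws are the multiplicative and the additive formal group law, respectively. Since any two complex orientations of the same spectrum are related by a strict change of coordinates $x \mapsto x + \sum a_i x^{i+1}$, the two formal group laws become isomorphic over $R$.

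It remains to show that $R$ is torsion free, so that the earlier lemma on torsion-free rings with isomorphic additive and multiplicative formal group laws forces $R$ to be a $\Q$-algebra, i.e.\ $KU \otimes \HZ$ is rational. For torsion-freeness I would invoke Landweber exactness of $KU$: one has a natural isomorphism
\[
\pi_*(KU \otimes \HZ) \;\cong\; \MU_*(\HZ) \otimes_{\MU_*} KU_*,
\]
and the left tensor factor $\MU_*(\HZ) = H_*(\MU;\Z)$ is torsion free since $\MU$ has a cell structure with cells only in even degrees. Because $KU_*$ is flat (indeed Landweber exact) over $\MU_*$, tensoring preserves the injectivity of multiplication by $n$ for every nonzero integer $n$, so $R$ is torsion free.

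The main obstacle is the last paragraph: one must be careful in identifying $\pi_*(KU \otimes \HZ)$ with $\MU_*(\HZ)\otimes_{\MU_*}KU_*$ and in explaining that Landweber flatness is enough to preserve torsion-freeness from $\MU_*(\HZ)$ to $R$. Once this is in place, the result follows by plugging the two established inputs (torsion-freeness and isomorphism of the additive and multiplicative formal group laws on $R$) into the lemma, concluding that $R$ is a $\Q$-algebra and hence $KU \otimes \HZ$ is rational.
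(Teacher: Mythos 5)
Your proof is correct and takes essentially the same route as the paper: both arguments produce two complex orientations of $KU \otimes \HZ$ (from $KU$ and from $\HZ$), observe that the resulting additive and multiplicative formal group laws must be isomorphic, and then invoke the preceding lemma to conclude rationality. A small but genuine improvement in your version is that you explicitly verify the torsion-freeness hypothesis of that lemma — via the Landweber exactness isomorphism $\pi_*(KU \otimes \HZ) \cong \MU_*(\HZ) \otimes_{\MU_*} KU_*$, the torsion-freeness of $H_*(\MU;\Z)$, and flatness of $KU_*$ over $\MU_*$ — whereas the paper applies the lemma without comment on this point. The paper instead uses even periodicity to shift the formal group law coefficients into $\pi_0$ before applying the lemma, which is a slightly cleaner formal setup than working with the full graded homotopy ring, but the substance of the argument is the same.
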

\begin{proof}
This is a classical fact, see \cite{AH} for a more general statement. A nice proof using formal groups goes as follows.
The spectrum $KU\otimes \HZ$ has two complex orientations, one coming from $KU$ and one coming from $\HZ$. Thus on $\pi_*(KU\otimes \HZ)$ the additive and the multiplicative formal group law are isomorphic: This is a general fact, see \cite[pages 3,4]{Rezk2}. Since by \cref{even-homotopy} the spectrum $KU \otimes \HZ$ is even periodic one can shift the coefficients of the formal group law to degree $0$. We then obtain that $\pi_0(KU\otimes \HZ)$ is a ring on which the additive and the multiplicative formal group law are isomorphic and hence is a $\Q$-algebra. Since $\pi_*(KU\otimes \HZ)$ is a module over $\pi_0(KU\otimes \HZ)$ the corollary follows.
\end{proof}

\begin{Lemma}\label{pullback}
Let $\S$ be the sphere spectrum and $p$ be a prime. Then the diagram
\[\xymatrix{\S \ar[r] \ar[d] & \S[\tfrac{1}{p}] \ar[d] \\ \S_{(p)} \ar[r] & \S_\Q}\]
is a pullback diagram of spectra.
\end{Lemma}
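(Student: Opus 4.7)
The plan is to recognise this as the arithmetic fracture square for the sphere spectrum and to verify it is cartesian by comparing the fibres of the two horizontal arrows. Since $\Sp$ is stable, a commutative square in $\Sp$ is a pullback if and only if the canonical map between the fibres of the two horizontal arrows is an equivalence. Accordingly, let
\[ F = \fib\bigl(\S \to \S[\tfrac{1}{p}]\bigr) \qquad \text{and} \qquad F' = \fib\bigl(\S_{(p)} \to \S_\Q\bigr), \]
and let $\varphi \colon F \to F'$ denote the comparison map induced by the commutative square. It suffices to show that $\varphi$ is an equivalence, which by Whitehead's theorem in $\Sp$ reduces to checking that $\pi_n(\varphi)$ is an isomorphism for every integer $n$.

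Next I would identify both sides on homotopy groups. The map $\S \to \S[\tfrac{1}{p}]$ is the Bousfield localisation inverting $p$, and on $\pi_n$ it becomes the abelian group localisation $\pi_n(\S) \to \pi_n(\S)[\tfrac{1}{p}]$. This map is surjective, with kernel the subgroup $T_n \subseteq \pi_n(\S)$ of elements annihilated by some power of $p$. The long exact sequence of the fibre sequence $F \to \S \to \S[\tfrac{1}{p}]$ therefore splits into short exact sequences and yields $\pi_n(F) \cong T_n$. In exactly the same way, the map $\S_{(p)} \to \S_\Q$ is rationalisation; on $\pi_n$ it is the surjection $\pi_n(\S)_{(p)} \to \pi_n(\S) \otimes \Q$, whose kernel is the torsion subgroup of $\pi_n(\S)_{(p)}$. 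Since $p$-localisation of an abelian group preserves $p$-power torsion and kills torsion prime to $p$, this kernel is again canonically $T_n$, giving $\pi_n(F') \cong T_n$.

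Finally, one has to check that under these identifications $\pi_n(\varphi)$ is the identity on $T_n$. This is a diagram chase in the long exact sequences associated to the two horizontal fibre sequences: both identifications send the class in $\pi_n(F)$ (respectively $\pi_n(F')$) represented by a $p$-power torsion element of $\pi_n(\S)$ to that element itself, and these identifications are compatible with the map $\S \to \S_{(p)}$. Hence $\pi_n(\varphi)$ is an isomorphism for all $n$, so $\varphi$ is an equivalence and the square is a pullback.

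The main technical point is the bookkeeping in the last paragraph, namely verifying that the two isomorphisms $\pi_n(F) \cong T_n \cong \pi_n(F')$ are compatible with the map induced by the square; everything else is essentially formal. Once the identification of the fibres in terms of $p$-power torsion is set up, the conclusion is immediate.
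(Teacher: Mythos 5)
Your argument is correct, and it is a genuinely different route from the one in the paper. You compare the \emph{fibers} of the two horizontal maps and compute their homotopy groups directly: on $\pi_n$, both $\S \to \S[\tfrac{1}{p}]$ and $\S_{(p)} \to \S_\Q$ are surjective with kernel the $p$-power torsion subgroup $T_n \subseteq \pi_n(\S)$, so the boundary maps in the two long exact sequences vanish, the fibers have $\pi_n \cong T_n$, and the comparison map is compatible with the inclusions into $\pi_n(\S)$ and $\pi_n(\S)_{(p)}$. The paper instead compares the horizontal \emph{cofibers}: each is a Moore spectrum, on $\Z[\tfrac{1}{p}]/\Z$ and on $\Q/\Z_{(p)}$ respectively, and these two abelian groups are canonically isomorphic, so the cofibers agree. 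The paper's proof is a one-liner once one knows that cofibers of arithmetic localizations of $\S$ are Moore spectra; it trades your homotopy-group bookkeeping for a homology computation (both cofibers are connective with $H_*$ concentrated in degree $0$, hence Moore). Your version is more elementary in the sense of using nothing beyond long exact sequences and standard facts about localization of abelian groups, at the cost of the compatibility check at the end — which you correctly identify as the place where care is needed, and which does in fact go through as you describe.
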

\begin{proof}
This follows from the equivalence of the horizontal cofibers which are the Moore spectra $M(\Z[\tfrac{1}{p}]/\Z) \simeq M(\Q / \Z_{(p)})$.
\end{proof}

We apply this observation as follows.
\begin{Lemma}\label{equivalence}
The canonical map 
\[ \xymatrix{L\C \otimes KU \ar[r] & L\C\otimes KU \otimes \mathbb{S}\adj = (L\C\otimes KU)\adj}\]
is an equivalence of $L\C \otimes KU$-modules.
\end{Lemma}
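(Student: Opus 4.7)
My plan is to reduce the claim to the assertion that $(L\C\otimes KU)_{(2)}$ is rational and then extract this from the structural facts already assembled in this section.

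First, I would apply \cref{pullback} with $p=2$ and smash the resulting square with $L\C\otimes KU$. Since smashing preserves pullbacks of spectra, this yields a pullback square of $L\C\otimes KU$-modules
\[\xymatrix{L\C\otimes KU \ar[r] \ar[d] & (L\C\otimes KU)\adj \ar[d] \\ (L\C\otimes KU)_{(2)} \ar[r] & (L\C\otimes KU)_\Q}\]
in which the top horizontal arrow is the map whose equivalence we wish to establish. Standard properties of pullbacks show that this top map is an equivalence if and only if the bottom horizontal map is an equivalence, i.e.\ if and only if $(L\C\otimes KU)_{(2)}$ is already rational.

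Next, I would identify $(L\C\otimes KU)_{(2)} \simeq L\C_{(2)} \otimes KU$. By \cref{L-theory-splits-2-locally}, the spectrum $L\C_{(2)}$ admits the structure of an $\HZ$-module. Tensoring on the right with $KU$ then makes $L\C_{(2)} \otimes KU$ into a module over the ring spectrum $\HZ\otimes KU$. By \cref{rational}, the ring spectrum $\HZ\otimes KU$ is rational, and any module over a rational ring spectrum is rational. Hence $(L\C\otimes KU)_{(2)}$ is rational, and the map $(L\C\otimes KU)_{(2)} \to (L\C\otimes KU)_\Q$ is an equivalence.

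The potential subtlety, and the only step worth double-checking, is that the $\HZ$-module structure produced by \cref{L-theory-splits-2-locally} need not be $L\C$-linear or multiplicative; however, for the present purpose we only need it to recognise $L\C_{(2)}\otimes KU$ as a generalized Eilenberg-MacLane spectrum with rational homotopy groups, which is a property of the underlying spectrum and so any choice of $\HZ$-module refinement of $L\C_{(2)}$ suffices. Combining the two paragraphs above gives that the top horizontal arrow in the pullback square is an equivalence of $L\C\otimes KU$-modules, completing the proof.
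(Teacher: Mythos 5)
Your proposal is correct and follows essentially the same route as the paper: smash the pullback square from \cref{pullback} (with $p=2$) against $L\C\otimes KU$, observe that $(L\C\otimes KU)_{(2)}$ is a module over the rational ring spectrum $\HZ\otimes KU$ via \cref{L-theory-splits-2-locally} and \cref{rational}, and conclude that the bottom map (hence the top map) in the pullback square is an equivalence. Your remark that the $\HZ$-module structure on $L\C_{(2)}$ need not be $L\C$-linear is a sound clarification, and your explicit statement that the square should be smashed with $L\C\otimes KU$ (rather than the $KU\otimes\HZ$ that appears, apparently as a typo, in the paper's own proof) is the correct reading.
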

\begin{proof}
The map is clearly a module map. So it suffices to argue that it is an equivalence of spectra.
For this we consider the pullback diagram
\[\xymatrix{L\C\otimes KU \ar[r] \ar[d] & (L\C \otimes KU)\adj \ar[d] \\ (L\C \otimes KU)_{(2)} \ar[r] & (L\C\otimes KU)_\Q}\]
wich is obtained by smashing the pullback diagram of \cref{pullback} with the spectrum $KU \otimes \HZ$. Since pullbacks are pushouts, smashing a pullback diagram with a spectrum gives again a pullback diagram.
Now we observe that $(L\C \otimes KU)_{(2)}$ is an $\HZ\otimes KU$ module since $L\C_{(2)}$ is an $\HZ$-module. By \cref{rational} the spectrum $\HZ\otimes KU$ is rational. Hence also all modules over this spectrum are rational. But this implies that in the above pullback diagram the lower horizontal arrow is an equivalence, thus also the upper horizontal one is.
\end{proof}
\begin{Rmk}
The same is true if we replace $L\C$ by $\ell\C$.
\end{Rmk}

\begin{Cor}
There is an equivalence of mapping spaces
\begin{enumerate}
\item[(1)] $\Map(L\C,KU) \simeq \Map(L\C\adj,KU)$, and
\item[(2)] $\Map(KU,L\C) \simeq \Map(KU\adj,L\C)$, and
\item[(3)] $\Map(\ell\C,ku) \simeq \Map(\ell\C,KU) \simeq \Map(\ell\C\adj,KU)$.
\end{enumerate}
\end{Cor}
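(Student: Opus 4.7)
The plan is to use the preceding Corollary to reduce the four vanishing claims to three, and then dispatch each by a Mayer--Vietoris argument applied to the arithmetic fracture square of the target. Combining the Corollary with the equivalence $L\C\adj \simeq KU\adj$ from \cref{L adj exact}, with the analogous equivalence $\ell\C\adj \simeq ku\adj$ (which follows from \cref{effect-tau-on-homotopy}, since $\tau_\C$ becomes an equivalence after inverting $2$), and with the adjunction $\Map(ku\adj, KU) \simeq \Map(ku\adj, \tau_{\geq 0}KU) = \Map(ku\adj, ku)$ coming from the connectivity of $ku\adj$, the statement of the theorem reduces to the three vanishing claims
\[ [KU\adj, KU] \;=\; [KU\adj, L\C] \;=\; [ku\adj, ku] \;=\; 0. \]

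For each such $[X\adj, Y]$, I apply $\Map(X\adj, -)$ to the arithmetic fracture fiber sequence $Y \to Y_{(2)} \oplus Y\adj \to Y_\Q$ and examine the resulting long exact sequence. Two simplifications occur. First, since $X\adj$ is $\S\adj$-local and $Y_{(2)}$ is $\S_{(2)}$-local, the identity $\S\adj \otimes \S_{(2)} \simeq \S_\Q$ yields
\[ \Map(X\adj, Y_{(2)}) \;\simeq\; \Map((X\adj)_{(2)}, Y_{(2)}) \;\simeq\; \Map(X_\Q, Y_{(2)}), \]
and the $\pi_0$ of this mapping spectrum vanishes because $\Hom_\Z(\Q, \Z_{(2)}) = 0$, so that $[X\adj, Y_{(2)}] = 0$. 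Second, using $L\C\adj \simeq KU\adj$ and $L\C_\Q \simeq KU_\Q$, the comparison map $[X\adj, Y\adj] \to [X\adj, Y_\Q]$ becomes the rationalization of a torsion-free operation ring and is therefore injective.

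Combining these two facts, the long exact sequence forces $[X\adj, Y]$ into the image of $\pi_1 \Map(X\adj, Y_\Q)$; but this latter group also vanishes by a parity argument, since $\Map(X\adj, Y_\Q) \simeq \Map(X_\Q, Y_\Q)$ is a mapping spectrum between rational spectra whose homotopy is concentrated in even degrees, and hence $\pi_1 = [\Sigma X_\Q, Y_\Q] = 0$. This yields $[X\adj, Y] = 0$ in all three cases, and the theorem follows. The hard part will be verifying the torsion-freeness of the relevant $K$-theory operation rings $[KU, KU]$ and $[ku, ku]$ needed for the injectivity of rationalization, together with identifying $[KU\adj, Y_{(2)}]$ carefully when $Y_{(2)}$ is not manifestly an $\HZ$-module (as for $Y = KU$); both of these are consequences of classical results on stable cohomology operations of complex $K$-theory, supplemented by the $\HZ$-module structure on $L\C_{(2)}$ from \cref{L-theory-splits-2-locally} in the remaining case.
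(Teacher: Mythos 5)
You have misread the target: the statement to be proven is the Corollary, which asserts \emph{equivalences of mapping spaces} such as $\Map(L\C,KU) \simeq \Map(L\C\adj,KU)$, whereas your proposal \emph{assumes} this Corollary (``the plan is to use the preceding Corollary'') and proceeds to prove the subsequent Theorem, namely the vanishing of the groups $[L\C,KU]$, $[KU,L\C]$, $[\ell\C,ku]$. These are different assertions, and the Corollary is an input to the Theorem, not a consequence of it. The paper's proof of the Corollary goes through \cref{equivalence}, which says that the canonical map $L\C\otimes KU \to (L\C\otimes KU)\adj$ is an equivalence of $L\C\otimes KU$-modules, so in particular $L\C\otimes KU \simeq L\C\adj\otimes KU$ as $KU$-modules and $L\C\otimes KU \simeq L\C\otimes KU\adj$ as $L\C$-modules. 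One then uses the free-forgetful adjunction for the extension of scalars along $\S \to KU$:
\[ \Map(L\C,KU) \simeq \Map_{KU}(L\C\otimes KU, KU) \simeq \Map_{KU}(L\C\adj\otimes KU, KU) \simeq \Map(L\C\adj, KU). \]
Statement (2) is symmetric, and statement (3) uses the $\ell\C$ version of \cref{equivalence} together with the universal property of the connective cover.

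As an aside: even read as a proof of the later Theorem, your fracture-square strategy diverges from the paper's, which deduces vanishing from Anderson self-duality of $KU$ and $L\C$ together with the evenness of $KU\otimes L\C$ (\cref{even-homotopy}, \cref{anderson duality sequences}) so that the $\Ext$ term disappears and the $\Hom$ term is a map from a $\Z\adj$-module to $\Z$. Your approach also has a genuine soft spot: the step $[X\adj,Y_{(2)}]=0$ does not follow from $\Hom_\Z(\Q,\Z_{(2)})=0$ without an argument that $\Map(X_\Q,Y_{(2)})$ is governed by a universal coefficient sequence over $\Z$, which is not automatic and requires structural input about $Y_{(2)}$ (this is precisely the kind of thing the Anderson-duality route supplies).
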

\begin{proof}
\cref{equivalence} implies that $L\C \otimes KU \simeq L\C\adj \otimes KU$ as $KU$-modules and $L\C \otimes KU \simeq L\C\otimes KU\adj$ as $L\C$-modules. Thus we obtain
\begin{align*}
\Map(L\C,KU) & \simeq \Map_{KU}(L\C\otimes KU,KU) \\
	& \simeq \Map_{KU}(L\C\adj\otimes KU,KU) \\ & \simeq \Map(L\C\adj,KU)
\end{align*}
Statement $(2)$ follows similarly and statement $(3)$ from the fact that \cref{equivalence} is true for $\ell\C$ instead of $L\C$ and the universal property of connective covers.
\end{proof}

\begin{Prop}\label{anderson duality sequences}
There are short exact sequences
\[\xymatrix@R=.5cm@C=0.5cm{0 \ar[r] & \Ext^1_\Z(KU_{-1}(L\C\adj),\Z) \ar[r] & KU^0(L\C\adj) \ar[r] & \Hom_\Z(KU_0(L\C\adj),\Z) \ar[r] &0\\
	0 \ar[r] & \Ext_\Z^1(KU_{-1}(\ell\C\adj),\Z) \ar[r] & KU^0(\ell\C\adj) \ar[r] & \Hom_\Z(KU_0(\ell\C\adj),\Z) \ar[r] & 0 \\
	0 \ar[r] & \Ext_\Z^1(L\C_{-1}(KU\adj),\Z) \ar[r] & L\C^0(KU\adj) \ar[r] & \Hom_\Z(L\C_0(KU\adj),\Z) \ar[r] & 0 \\
	}\]
\end{Prop}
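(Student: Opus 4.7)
Proof proposal:

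All three sequences are instances of Anderson duality. Recall that Anderson duality provides, for every spectrum $Y$, a natural short exact sequence
\[ 0 \to \Ext^1_\Z(\pi_{-1}Y, \Z) \to \pi_0 I_\Z Y \to \Hom_\Z(\pi_0 Y, \Z) \to 0, \]
and that $F(X, I_\Z Y) \simeq I_\Z(X \otimes Y)$ for all spectra $X, Y$. Since $KU$ is Anderson self-dual, $I_\Z KU \simeq KU$ (up to an invisible Bott shift), so $F(X, KU) \simeq I_\Z(X \otimes KU)$ for every spectrum $X$. Taking $\pi_0$ produces the universal coefficient sequence
\[ 0 \to \Ext^1_\Z(KU_{-1}(X), \Z) \to KU^0(X) \to \Hom_\Z(KU_0(X), \Z) \to 0, \]
which upon specialising to $X = L\C\adj$ and $X = \ell\C\adj$ yields the first two sequences.

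For the third sequence, I would first reduce $L\C^0(KU\adj) = \pi_0 F(KU\adj, L\C)$ to $KU^0(KU\adj)$. The key input is the vanishing $F(X, Y) = 0$ whenever $X$ is $\Z[\tfrac{1}{2}]$-local and $Y$ is $2$-primary torsion: using $X \simeq X \otimes \S\adj$ one has $F(X, Y) \simeq F(X, F(\S\adj, Y))$, and $F(\S\adj, Y) = 0$ since any map $\S\adj \to Y$ must land in a subgroup of $\pi_* Y$ that is simultaneously $2$-divisible and $2$-power torsion, hence zero. Because the fibres of $L\C \to L\C\adj$ and $KU \to KU\adj$ are $2$-primary torsion spectra (being $L\C$ resp.\ $KU$ smashed with the fibre of $\S \to \S\adj$, which is a shift of the Moore spectrum on $\Z[\tfrac{1}{2}]/\Z$), this vanishing yields
\[ F(KU\adj, L\C) \simeq F(KU\adj, L\C\adj) \simeq F(KU\adj, KU\adj) \simeq F(KU\adj, KU), \]
with the middle equivalence coming from $L\C\adj \simeq KU\adj$ in \cref{L adj exact}. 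Applying the universal coefficient sequence to $KU^0(KU\adj)$, and identifying $KU_*(KU\adj) \simeq L\C_*(KU\adj)$ via \cref{equivalence} together with $L\C\adj \simeq KU\adj$, produces the third sequence.

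The main obstacle is establishing the chain of equivalences above; this hinges on the vanishing lemma for maps from $\Z[\tfrac{1}{2}]$-local sources into $2$-primary torsion spectra, together with the arithmetic fracture of $L\C$ and $KU$ at $2$. Once this reduction is in place, the third sequence is a formal consequence of Anderson self-duality of $KU$. As an alternative route avoiding the reduction, the third sequence can be obtained directly from the universal coefficient spectral sequence for $L\C$-cohomology, which collapses at $E_2$ because $\pi_* L\C = \Z[b^{\pm 1}]$ is a graded PID.
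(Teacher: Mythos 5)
Your treatment of the first two sequences matches the paper exactly: both use Anderson self-duality of $KU$ to get the UCT sequence for $KU^0(X)$. For the third sequence the paper takes a shortcut you miss: it invokes that $L\C$ is \emph{also} Anderson self-dual (citing Stojanoska), so the UCT sequence for $L\C^0(KU\adj)$ is obtained directly, no reduction to $KU$-cohomology needed.

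Your reduction for the third sequence has a genuine gap. The vanishing claim ``$F(\S\adj,Y)=0$ whenever $Y$ is $2$-primary torsion'' is false for the $Y$ you need. Writing $Y = \mathrm{fib}(L\C \to L\C\adj)$, the nonzero homotopy groups of $Y$ are $\Z[\tfrac12]/\Z$, which is a $2$-power torsion group that is \emph{also} $2$-divisible. Your justification --- that the image of $\pi_* \S\adj$ must lie in a subgroup that is both $2$-divisible and $2$-power torsion, hence zero --- therefore does not apply: $\Z[\tfrac12]/\Z$ itself is such a subgroup and is nonzero. More concretely, $\pi_n F(\S\adj,Y)$ sits in a Milnor sequence whose $\lim$-term is $\lim\bigl(\Z[\tfrac12]/\Z \xleftarrow{\cdot 2} \Z[\tfrac12]/\Z \xleftarrow{\cdot 2}\cdots\bigr)$, and because multiplication by $2$ is surjective and injective-up-to-$\Z/2$ on $\Z[\tfrac12]/\Z$ this inverse limit is a nonzero (uncountable, uniquely $2$-divisible) group. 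So $F(\S\adj, Y)\neq 0$, and the chain of equivalences $F(KU\adj,L\C)\simeq F(KU\adj,L\C\adj)\simeq\cdots$ is not established by your argument. (A vanishing of this type would require \emph{bounded} $2$-torsion, which does not hold here.) Since the reduction was the whole content of your treatment of the third sequence, you should instead simply use Anderson self-duality of $L\C$, as the paper does; that immediately yields the stated sequence for $L\C^0(KU\adj)$.

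Your suggested alternative via the universal coefficient spectral sequence over $\pi_*L\C = \Z[b^{\pm1}]$ is a viable different route, but it requires setting up the UCSS for $L\C$-modules and translating $\Ext^1_{\Z[b^{\pm1}]}$ in degree $0$ into $\Ext^1_\Z$ of the degree-$(-1)$ part; the Anderson-duality argument is shorter and is what the paper uses.
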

\begin{proof}
The exact sequences follow from the general UCT sequence relating a spectrum and its Anderson dual, see \cite{anderson}, using that both $KU$ and $L\C$ are Anderson self-dual (see \cite[below Prop. 2.2]{Stojanoska}). 
\end{proof}

\begin{proof}[Proof of \cref{connectedness-of-mapping-spaces}]
The Ext-terms of \cref{anderson duality sequences} vanish due to \cref{even-homotopy}, and certainly \[KU_0(L\C\adj) \cong L\C_0(KU\adj) \text{ and } KU_0(\ell\C\adj) \] are $\Z\adj$-modules. For all $\Z\adj$-modules $M$ we have that $\Hom(M,\Z) = 0$. Thus the theorem follows.
\end{proof}

%%%%%%%%%%%%%%%%%%%%%%%%%%%%%%%%%%%%%%%%%%%%
\appendix \label{appendix}
%%%%%%%%%%%%%%%%%%%%%%%%%%%%%%%%%%%%%%%%%%%%

%%%%%%%%%%%%%%%%%%%%%%%%%%%%%%%%%%%%%%%%%%%%
\section{$L$-theory of non-unital rings}\label{non unital L-theory}
%%%%%%%%%%%%%%%%%%%%%%%%%%%%%%%%%%%%%%%%%%%%

The goal of this section is to prove that the $L$-theory functor for non-unital $C^*$-algebras inherits a lax symmetric monoidal structure from the lax 
symmetric monoidal structure on the $L$-theory functor for unital, involutive rings, see \cref{L theory non unital monoidal}.  We will prove such a result in a more general framework. 

First let $\c$ be an $\infty$-category which has an initial object, finite products and split pullbacks. By split pullback we mean a pullback in which one of the morphisms admits a section. Since we will need these limits repeatedly we introduce some terminology: we consider the collection $\calK$ of simplicial sets consisting of all finite discrete simplicial sets and the nerve of the category depicted as 
$$
\xymatrix@R=.3cm@C=1.2cm{
 & \bullet \ar[dd]<-2pt>_p  & \\ & & ps = \id\\
 \bullet\ar[r] & \bullet \ar[uu]<-2pt>_s & &
} 
$$
which indexes split pullbacks.

In this situation, where $\c$ admits $\calK$-shaped limits and an initial object $\varnothing$, we can form a new $\infty$-category $\c_{/\emptyset}$ consisting of objects of $\c$ together with a morphism to the initial object $\emptyset$. There is a canonical functor
$R: \c \to \c_{/\emptyset}$ which is right adjoint to the canonical forgetful functor $\c_{/\emptyset} \to \c$ and which sends $c \in \c$ to $(c \times \emptyset \to \emptyset) \in \c_{/\emptyset}$. For the next statement, recall that an $\infty$-category is called pointed if it admits an object which is both initial and terminal. 

\begin{Prop}\label{bladddd}
Suppose $\Dd$ is an $\infty$-category which admits $\calK$-shaped limits and is pointed. Then the functor
\[ R^*: \xymatrix{\Fun^\calK(\c_{/\emptyset},\Dd) \ar[r]^-\simeq & \Fun^\calK(\c,\Dd)}\]
is an equivalence of $\infty$-categories.
\end{Prop}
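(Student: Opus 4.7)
The plan is to exhibit an explicit quasi-inverse to $R^*$. For $F \in \Fun^\calK(\c, \Dd)$ I define a candidate extension $\widetilde F \colon \c_{/\emptyset} \to \Dd$ by
\[ \widetilde F(A, \alpha) := \fib\bigl( F(A) \xrightarrow{F(\alpha)} F(\emptyset) \bigr), \]
where the fiber is formed in the pointed $\infty$-category $\Dd$. To give this a functorial meaning, note that the universal augmentation provides a natural transformation from the forgetful functor $U \colon \c_{/\emptyset} \to \c$ to the constant functor with value $\emptyset$; post-composing with $F$ gives a morphism $F \circ U \to \underline{F(\emptyset)}$ in $\Fun(\c_{/\emptyset}, \Dd)$, and $\widetilde F$ is its pointwise fiber. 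This construction is evidently functorial in $F$, giving a candidate inverse $S \colon \Fun^\calK(\c, \Dd) \to \Fun(\c_{/\emptyset}, \Dd)$.

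Next I would verify that $S$ lands in $\Fun^\calK(\c_{/\emptyset}, \Dd)$ and that $R^* S \simeq \id$. For the first claim, note that $(\emptyset, \id_\emptyset)$ is the terminal object of $\c_{/\emptyset}$, so $\widetilde F(\emptyset, \id_\emptyset) \simeq \fib(F(\emptyset) \xrightarrow{\id} F(\emptyset)) \simeq 0$. Finite products and split pullbacks in $\c_{/\emptyset}$ are computed underlying in $\c$ and remain split (using the canonical morphism $\emptyset \to (-)$ from the initial object), so $F$ preserves them; preservation then passes to $\widetilde F$ by the standard commutation of fibers with pullbacks in a pointed $\infty$-category. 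For $R^* S \simeq \id$ one computes
\[ (R^* \widetilde F)(c) = \fib\bigl( F(c \times \emptyset) \to F(\emptyset) \bigr) \simeq \fib\bigl( F(c) \times F(\emptyset) \xrightarrow{\pi_2} F(\emptyset) \bigr) \simeq F(c), \]
where the middle equivalence uses that $F$ preserves the product $c \times \emptyset$.

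The key step is to show $S R^* \simeq \id$, i.e.\ that for every $\calK$-preserving $H \colon \c_{/\emptyset} \to \Dd$ one has $\widetilde{H \circ R} \simeq H$ naturally. The essential observation is that for every $(A, \alpha) \in \c_{/\emptyset}$ the square
\[
\xymatrix@C=1.2cm{
(A, \alpha) \ar[r]^-{\langle \id_A, \alpha \rangle} \ar[d]_\alpha & R(A) = (A \times \emptyset,\, \pi_2) \ar[d]^{\alpha \times \id_\emptyset} \\
(\emptyset, \id_\emptyset) \ar[r]_-{\Delta_\emptyset} & R(\emptyset) = (\emptyset \times \emptyset,\, \pi_2)
}
\]
is a split pullback in $\c_{/\emptyset}$, natural in $(A, \alpha)$: its underlying square in $\c$ is easily checked to be a pullback, and the right vertical admits the section $s \times \id_\emptyset$, where $s \colon \emptyset \to A$ is the canonical morphism from the initial object, which commutes with augmentations and hence underlies a section in $\c_{/\emptyset}$. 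Applying $H$ and using $H(\emptyset, \id_\emptyset) \simeq 0$ yields a natural equivalence $H(A, \alpha) \simeq \fib(H(R(A)) \to H(R(\emptyset))) = \widetilde{H \circ R}(A, \alpha)$.

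The main obstacle is not any single computation but the bookkeeping required to make the above pointwise identifications into honest natural equivalences of $\infty$-functors and to verify that the construction $F \mapsto \widetilde F$ is itself an $\infty$-functor, rather than merely a map on objects. This should be handled by realising $\widetilde F$ as an honest fiber in $\Fun(\c_{/\emptyset}, \Dd)$, which has all limits computed pointwise, and by producing the required natural square as a composite of explicit simplicial maps between slice and arrow categories. Once these coherences are in place, the equivalences $R^* S \simeq \id$ and $S R^* \simeq \id$ exhibit $R^*$ as an equivalence of $\infty$-categories.
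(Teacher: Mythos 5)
Your proposal is correct in substance but takes a genuinely different route from the paper, so let me compare. You construct the inverse directly as a fiber functor $S(F)(A,\alpha) = \fib\bigl(F(A) \to F(\emptyset)\bigr)$ and verify the two triangle identities by hand, with the key observation that the unit square of the adjunction $U \dashv R$ is a split pullback in $\c_{/\emptyset}$. The paper instead embeds $\c_{/\emptyset}$ as a colocalizing full subcategory of $\Fun(I,\c)$, where $I$ is the category of pointed sets of cardinality at most one, and produces the extension $\Fun^\calK(\c,\Dd) \to \Fun^\calK(\c_{/\emptyset},\Dd_{/\emptyset}) \simeq \Fun^\calK(\c_{/\emptyset},\Dd)$ by pushing forward through $\Fun(I,-)$ and collapsing $\Dd_{/\emptyset} \simeq \Dd$ using that $\Dd$ is pointed. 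The paper's route is more structural and delegates the coherence bookkeeping you correctly flag as the delicate part to general $\infty$-categorical machinery (colocalizations, functoriality of $\Fun(I,-)$); your route is more computational and makes the inverse explicit, which the paper only records afterwards in a remark as the informal description of the extension. Both are valid, and your identification of the unit square as a split pullback is a clean way to organize the $S R^* \simeq \id$ direction.

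One phrase in your verification that $S$ lands in $\Fun^\calK$ is misleading and should be tightened: you write that finite products in $\c_{/\emptyset}$ are \emph{computed underlying} in $\c$. They are not; the forgetful functor $U\colon \c_{/\emptyset}\to\c$ is a left adjoint and does not preserve products. The product of $(A,\alpha)$ and $(B,\beta)$ in $\c_{/\emptyset}$ is the fiber product $A\times_\emptyset B$, whose underlying diagram in $\c$ is a \emph{split pullback} (split because $\emptyset$ is initial, so $\alpha$ and $\beta$ admit canonical sections). Thus $F$ does preserve it, and one then needs the small additional identification $\fib\bigl(X\times_Z Y \to Z\bigr) \simeq \fib(X\to Z)\times \fib(Y\to Z)$ in a pointed $\infty$-category, which holds because the pullback of the two fibers over the zero object is their product. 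This is the argument your parenthetical gesture at sections of $\emptyset\to(-)$ is pointing toward; I recommend writing it out rather than asserting that products are computed underlying.
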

\begin{proof}
Consider the category $I := \Nfin^{\leq 1}$ of finite pointed sets of cardinality less or equal to 1. An $I$-shaped diagram in $\c$ is given by a morphism $c_1 \to c_0$ with a chosen split. We consider the full subcategory $\c_{/\emptyset} \subseteq \Fun(I,\c)$ consisting of those morphisms $c_1 \to c_0$ where $c_0$ is initial in $\c$. Since the section is essentially unique in this case, it follows that this $\infty$-category is equivalent to the usual slice category considered above. The inclusion $\c_{/\emptyset} \subseteq \Fun(I,\c)$ admits a right adjoint given on objects as  
$$
R: \Fun(I,\c) \to \c_{/\emptyset} \qquad (c_1 \to c_0) \mapsto (c_1 \times_{c_0} \emptyset \to \emptyset) .
$$
The functor $R$ exhibits $\c_{/\emptyset}$ as a colocalization of $\Fun(I,\c)$. It is in particular a Dwyer-Kan localization at those morphisms in $\Fun(I,\c)$ which get mapped to equivalences by $R$. Assume now that we have another $\infty$-category $\Dd$ which also admits an initial object and $\calK$-shaped limits  and a functor $F: \c \to \Dd$. By postcomposition we obtain a functor $F_*: \Fun(I,\c) \to \Fun(I,\Dd)$. If $F$ preserves $\calK$-shaped limits then we claim that 
the functor $F_*: \Fun(I,\c) \to \Fun(I,\Dd)$ descends to a functor $F': \c_{/\emptyset} \to \Dd_{/\emptyset}$.
To verify this it suffices to check that $F_*$ sends local equivalences to local equivalences, which precisely follows from the fact that the functor $F$ preserves $\calK$-shaped limits. By the same reasoning one can also check that $F'$ again has the property of preserving $\calK$-shaped limits.

Thus we get a commutative diagram
$$
\xymatrix{
\c \ar[r]^F\ar[d] &\Dd\ar[d] \\
\Fun(I,\c)  \ar[r]^{F_*}\ar[d] & \Fun(I,\Dd)\ar[d] \\
\c_{/\emptyset} \ar[r]^{F'} & \Dd_{/ \emptyset} .  
}
$$
Here the left upper vertical map is the right adjoint to the 
 functor  $\Fun(I,\c) \to \c$ given by evaluation at $\langle 1 \rangle \in I$. This adjoint is given on objects by
$$\c \to \Fun(I,\c) \qquad c \mapsto (c \times c \xrightarrow{\mathrm{pr}_1} c).$$
Similar for the right upper vertical map. The right vertical composition is the cofree functor, in particular it is an equivalence if $\Dd$ is pointed.
We thus obtain a functor
\[\xymatrix{ \Fun^\calK(\c,\Dd) \ar[r] & \Fun^\calK(\c_{/\emptyset}, \Dd_{/\emptyset}) \ar[r]^\simeq &  \Fun^\calK(\c_{/\emptyset}, \Dd). }\]
This comes by construction as an inverse to the functor $R^*$. 
\end{proof}

\begin{Rmk} 
Assume we are given a functor $F: \c \to \Dd$ that preserves $\calK$-shaped limits. Then the proof shows that the extension $\overline{F}: \c_{/\emptyset} \to \Dd$ is informally given by sending $(c \to \emptyset)$ to $(F(c)\times_{F(\emptyset)} \emptyset \to \emptyset)$.
\end{Rmk}

Again consider an $\infty$-category $\c$ that admits $\calK$-shaped limits and an initial object $\emptyset \in \c$. Suppose furthermore that $\c$ is equipped with a symmetric monoidal structure which has the property that the tensor 
bifunctor preserves $\calK$-shaped limits in each variable separately. 
\begin{Prop}\label{blaeeeee}
There exists a symmetric monoidal structure on the slice category $\c_{/\emptyset}$ such that the canonical functor $\c \to \c_{/\emptyset}$ admits a symmetric monoidal refinement.
\end{Prop}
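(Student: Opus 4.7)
The plan is to apply Hinich's symmetric monoidal localization theorem (\cref{localizing symmetric monoidal categories}) to the Dwyer-Kan presentation $\c_{/\emptyset} \simeq \Fun(I,\c)[W^{-1}]$ established in the proof of \cref{bladddd}, where $W$ is the class of morphisms in $\Fun(I,\c)$ that are inverted by the colocalization functor $R$.

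First I would endow $\Fun(I,\c)$ with a symmetric monoidal structure inherited from $\c^\otimes$ and verify that $W$ is compatible with it in the sense of \cref{SM localization}. Every $I$-diagram is by construction a split arrow $c_1 \rightleftarrows c_0$, and $R$ is computed as the pullback $c_1 \times_{c_0} \emptyset$; because the structure map $c_1 \to c_0$ is split, this pullback is a split pullback indexed by a simplicial set in the collection $\calK$. By hypothesis the tensor bifunctor on $\c$ preserves $\calK$-shaped limits separately in each variable, so tensoring with a fixed $I$-diagram preserves the formation of $R$ and therefore preserves $W$. Applying \cref{localizing symmetric monoidal categories} then produces a symmetric monoidal structure on $\c_{/\emptyset}$ together with a symmetric monoidal localization $\Fun(I,\c) \to \c_{/\emptyset}$.

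For the symmetric monoidal refinement of the canonical functor $\c \to \c_{/\emptyset}$, I would use its factorization $\c \to \Fun(I,\c) \to \c_{/\emptyset}$ from the proof of \cref{bladddd}. The first arrow, the right adjoint to evaluation at $\langle 1 \rangle$, is a priori only lax symmetric monoidal, so the upgrade to a strong symmetric monoidal functor must use the localization in an essential way: the comparison map $R(c) \otimes R(d) \to R(c \otimes d)$ in $\c_{/\emptyset}$ is obtained by identifying the constraints coming from the cofree construction with split pullback diagrams preserved by the tensor. The main obstacle is exactly this coherence check; pinning down the correct symmetric monoidal structure on $\Fun(I,\c)$---whether pointwise or via Day convolution along the pointed symmetric monoidal structure on $I$ coming from the smash product on $\Nfin$---is what makes these comparison maps into equivalences after passage to the localization.
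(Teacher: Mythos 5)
Your high-level skeleton matches the paper's: present $\c_{/\emptyset}$ as a Dwyer-Kan localization of $\Fun(I,\c)$ along the $R$-equivalences, equip $\Fun(I,\c)$ with a symmetric monoidal structure, check compatibility of the localization with that structure, and descend. But the issue you explicitly leave unresolved — which symmetric monoidal structure to put on $\Fun(I,\c)$ — is not a coherence formality; it is the entire mathematical content, and the two options you list behave completely differently.

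The pointwise tensor $(c_1\to c_0)\otimes(d_1\to d_0):=(c_1\otimes d_1\to c_0\otimes d_0)$ does \emph{not} work, and passing to the localization cannot repair it. For the pointwise structure, evaluation at $\langle 1\rangle$ is strong symmetric monoidal and the adjoint $\c\to\Fun(I,\c)$, $c\mapsto(c\times c\to c)$, is merely lax; since the tensor of $\c$ preserves $\calK$-shaped (hence finite discrete) limits, the relevant comparison map identifies a $4$-fold product $(c\otimes d)^{\times 4}$ with a $2$-fold product $(c\otimes d)^{\times 2}$ over $c\otimes d$, which is not an equivalence, and applying $R$ (pulling back along $\emptyset\to c\otimes d$) does not make it one. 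What the paper does instead is equip $\Fun(I,\c)$ with the \emph{Day convolution} structure relative to the smash product on $I=\Nfin^{\le 1}$. Then the functor $\c\to\Fun(I,\c)$ is the left (resp.\ right, in the undualized picture) Kan extension along $\{\langle 1\rangle\}\hookrightarrow I$, and Kan extension along a symmetric monoidal functor is automatically \emph{strong} symmetric monoidal for Day convolution — this is precisely the mechanism that turns your lax comparison maps into equivalences, and it is invisible in your write-up. You also omit a second necessary check: Day convolution on $\Fun(I,\c)$ a priori requires cocompleteness of $\c$, whereas $\c$ only has $\calK$-shaped (co)limits; the paper observes that the Day tensor here is computed as a pushout-product which only involves the split pushouts guaranteed by the hypotheses. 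Your use of Hinich's criterion in place of the Nikolaus--Sagave criterion is a harmless substitution, but without committing to Day convolution and supplying these two verifications the argument does not close.
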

\begin{proof}
We will prove the dual statement, which reads as follows: let $\c$ be a symmetric monoidal $\infty$-category $\c$ which admits $\calK$-shaped colimits and such that the tensor product preserves $\calK$-shaped colimits. If $\c$ admits a terminal object $*$ then $\c_{*} := \c_{*/}$  admits a symmetric monoidal structure such that the functor $\c \to \c_*$ which adds a disjoint basepoint can be refined to a symmetric monoidal functor. 

Before we prove this statement let us comment on how it implies the proposition. This follows from the general fact that the opposite of a symmetric monoidal $\infty$-category admits also the structure of a symmetric monoidal $\infty$-category. This is done by straightening the fibration $\c^\otimes \to \Nfin$, postcomposing with the opposite functor, and then unstraightening again. This construction is functorial in symmetric monoidal functors. For a more detailed discussion see \cite{Knudsen, Barwicketal}.

Now to the proof of the above statement. 
Recall the $\infty$-category  $I := \Nfin^{\leq 1}$ considered in the proof of \cref{bladddd} . This admits a symmetric monoidal structure given by smash product. Thus the functor category $\Fun(I,\c)$ admits a refinement to an $\infty$-operad  $\Fun(I, \c)^\otimes$ given by Day convolution. For details see \cite{Glasman} and more specifically \cite[Proposition 3.3]{Nikolaus}. We claim that in this situation the Day convolution is symmetric monoidal and the canonical functor $\Fun(I, \c)^\otimes \to \c^\otimes$ admits an operadic left adjoint which is symmetric monoidal (for the terminology operadic left adjoint we refer to \cite[Definition 2.9]{Nikolaus}). The proof is essentially the same proof as in \cite[Lemma 2.5]{Glasman} except that we only have to make sure that the all the colimits in the left Kan extension describing the tensor product exists in $\c$. To see this first notice that  the Day convolution tensor product is, if it exists, pointwise given by the pushout-product in the functor category. Since all morphisms have chosen splits one easily checks that this  only requires split pushouts in $\c$ which exist by assumption. 

As a next step we claim that the symmetric monoidal structure on $\Fun(I,\c)$ descends to a symmetric monoidal structure along the localization $\Fun(I,\c) \to \c_*$. To see this we need to prove that local equivalences are preserved by tensoring with any object of $\Fun(I,\c)$ (see \cite[Example 2.12.]{Nikolaus} for a precise statement of this well known criterion). Again this can now be directly verified using the pushout product axiom and the fact that the tensor product of $\c$ preserves $\calK$-shaped colimits.
\end{proof}

\begin{Rmk}
\cref{blaeeeee} also admits a different proof using the methods developed in \cite{GGN}. To do that one has to consider the tensor product on the $\infty$-category $\Cat_\infty^\calK$ consisting of all $\infty$-categories that admit $\calK$-shaped colimits and functors that preserve those. Then it follows from Proposition \ref{bladddd}  that the construction $\c \mapsto \c_*$ is a smashing localization of $\Cat_\infty^\calK$.  
\end{Rmk}

We keep the assumption that $\c$ is a symmetric monoidal $\infty$-category that admits $\calK$-shaped limits, an initial object $\emptyset \in \c$ and such that the tensor 
bifunctor preserves $\calK$-shaped limits in each variable separately.
Now assume that $\c_{/\emptyset}$ is equipped with a symmetric monoidal structure and that the functor $\c \to \c_{/\emptyset}$ admits a refinement to a symmetric monoidal functor. This can always be done by \cref{blaeeeee}. But it could also be done by other means (as will be the case in our application) and the next statement will be true in such a potentially more general situation. 
Recall that a presentably symmetric monoidal $\infty$-category is a symmetric monoidal $\infty$-category which is presentable as an $\infty$-category and such that the tensor bifunctor preserves colimits separately in both variables. 

\begin{Prop}\label{symmetric structure on slice}
 Suppose $\Dd$ is a presentably symmetric monoidal $\infty$-category which is pointed. Then the restriction along the above functor induces an equivalence
\[ \xymatrix{\Fun_\lax^\calK(\c_{/\emptyset},\Dd) \ar[r]^-\simeq & \Fun_\lax^\calK(\c,\Dd)}\]
where the superscript $\calK$ denotes functors that preserve $\calK$-shaped limits.
\end{Prop}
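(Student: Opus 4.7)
The strategy is to promote the non-monoidal equivalence of Proposition \ref{bladddd} to lax monoidal functors by applying it fiberwise in the $\infty$-operad formalism. Recall that a lax symmetric monoidal functor $G\colon \c \to \Dd$ is the same as a morphism $G^\otimes\colon \c^\otimes \to \Dd^\otimes$ of $\infty$-operads over $\Nfin$, and by the Segal condition its fiber over $\langle n \rangle$ is identified with $(G,\dots,G)\colon \c^n \to \Dd^n$. Under the assumption that $\c \to \c_{/\emptyset}$ refines to a symmetric monoidal functor, the Segal identifications give $(\c_{/\emptyset})^\otimes_{\langle n \rangle} \simeq (\c_{/\emptyset})^n \simeq (\c^n)_{/(\emptyset,\dots,\emptyset)}$, while $\Dd^n$ is pointed since $\Dd$ is.

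For each $n$, I would apply Proposition \ref{bladddd} to $\c^n$ (which admits $\calK$-shaped limits and an initial object) and the pointed $\infty$-category $\Dd^n$. This extends $G^n$ essentially uniquely to a $\calK$-preserving functor $\overline{G}^n\colon (\c_{/\emptyset})^n \to \Dd^n$, given on objects by the cofree formula $(x_i \to \emptyset)_i \mapsto (G(x_i) \times_{G(\emptyset)} 0)_i$, where $0 \in \Dd$ denotes the zero object. I would then verify that these extensions assemble into a morphism $\overline{G}^\otimes\colon (\c_{/\emptyset})^\otimes \to \Dd^\otimes$ over $\Nfin$ which preserves inert coCartesian lifts: the cofree formula is manifestly natural in products, and the lax structure on $\overline{G}^\otimes$ over active morphisms is built canonically from that of $G^\otimes$ together with the symmetric monoidal structure of $\c \to \c_{/\emptyset}$. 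The resulting assignment $G \mapsto \overline{G}$ provides the inverse to restriction along $R^\otimes$, and the uniqueness part of Proposition \ref{bladddd} ensures that the composition in either direction is canonically equivalent to the identity.

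The main obstacle is the assembly step, i.e.\ formalizing that the fiberwise extensions glue to a genuine functor of $\infty$-operads. The cleanest route is to establish a relative version of Proposition \ref{bladddd} over $\Nfin$, asserting that restriction along $R^\otimes$ induces an equivalence on the $\infty$-category of $\Nfin$-functors that preserve $\calK$-shaped limits fiberwise; the statement of the proposition then follows by restricting to the full subcategory cut out by the inert-preservation condition. The presentability hypothesis on $\Dd$ enters crucially here to guarantee existence of the pullbacks and products in the cofree formula, and to permit straightening/unstraightening arguments carrying the pointwise construction globally over $\Nfin$ rather than fiber by fiber.
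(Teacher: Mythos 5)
Your approach is genuinely different from the paper's, and it contains a gap that is not a mere technical annoyance but the mathematical heart of the statement.

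The paper does not work fiberwise over $\Nfin$ at all. Instead it equips the two functor categories $\Fun(\c,\Dd)$ and $\Fun(\c_{/\emptyset},\Dd)$ with Day convolution symmetric monoidal structures, shows that precomposition along $\c \to \c_{/\emptyset}$ is a lax symmetric monoidal functor fitting into a square with the reflective inclusions of the $\calK$-limit-preserving subcategories, and then proves every arrow in that square admits a \emph{strong} symmetric monoidal left adjoint (using the internal-hom criterion of \cite[Example~2.12(3)]{Nikolaus}, where the presentability of $\Dd$ enters via existence of Day convolution and the internal hom formula). Combining this with the underlying equivalence of \cref{bladddd}, one gets an equivalence of symmetric monoidal $\infty$-categories; passing to $\e_\infty$-algebras and invoking Glasman's identification $\Alg_{\e_\infty}(\Fun(\c,\Dd)^\otimes) \simeq \Fun_\lax(\c,\Dd)$ yields the result. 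The Day convolution machinery is precisely what turns the statement about ``$\Fun_\lax$'' into a statement about underlying $\infty$-categories where \cref{bladddd} can be applied once, globally, rather than fiber by fiber.

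Your proposal correctly identifies the assembly step as the crux but does not resolve it, and I do not think it is resolvable as stated. Defining $\overline{G}^\otimes$ on the coCartesian lifts of active morphisms is not supplied by \cref{bladddd}: over the active map $\mu\colon \langle 2 \rangle \to \langle 1 \rangle$ one has to produce a coherent family of maps $\overline{G}(x) \otimes \overline{G}(y) \to \overline{G}(x \otimes_{\c_{/\emptyset}} y)$ in $\Dd$, and these do not fall out of the ``cofree formula'' $\overline{G}(x) = G(x_1) \times_{G(\emptyset)} 0$ together with the lax structure of $G$; one has to compute $x \otimes_{\c_{/\emptyset}} y$ from the (so far unspecified) monoidal structure on the slice, check compatibility with the pullbacks defining $\overline{G}$, and then verify all the higher coherences required of a map of $\infty$-operads. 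The ``relative version of \cref{bladddd} over $\Nfin$'' that you appeal to is asserted, not proved, and its proof would have to address exactly this issue; it is not a corollary of the fiberwise statement. (Note also that straightening/unstraightening will not carry the extension for free: the coCartesian transition functors of $\Dd^\otimes$ over active maps are tensor products, which in a presentably symmetric monoidal $\Dd$ preserve colimits but not the $\calK$-shaped \emph{limits} used in the cofree formula, so the extension is not even coCartesian-natural.) In short, the fiberwise reduction localizes the difficulty but leaves it open, whereas the paper's use of Day convolution and operadic adjunctions handles the coherence once and for all.
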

\begin{proof}

The functor categories $\Fun(\c,\Dd)$ and $\Fun(\c_{/\emptyset},\Dd)$ are equipped with symmetric monoidal structures $\Fun(\c,\Dd)^\otimes$ and $\Fun(\c_{/\emptyset},\Dd)^\otimes$ via the Day convolution, just as in \cite[Proposition 3.3]{Nikolaus}.
By functoriality of the Day convolution the functor $\c \to \c_{/\emptyset}$ induces a lax symmetric monoidal functor
\[\xymatrix{\Fun(\c_{/\emptyset},\Dd) \ar[r] & \Fun(\c,\Dd)}\]
i.e. a map of $\infty$-operads $ \Fun(\c_{/\emptyset},\Dd)^\otimes \to \Fun(\c,\Dd)^\otimes$. 
This map fits into a commutative square of lax symmetric monoidal functors
\begin{equation}\label{diagramp}
\xymatrix{\Fun(\c_{/\emptyset},\Dd) \ar[r]  & \Fun(\c,\Dd)  \\ \Fun^\calK(\c_{/\emptyset},\Dd) \ar[r]\ar[u] & \Fun^\calK(\c,\Dd)\ar[u] }
\end{equation}
where the vertical maps are full inclusions and the bottom map is defined by restriction of the upper one. This makes sense since $\c \to \c_{/\emptyset}$ preserves $\calK$-shaped limits. 

We claim that all maps in this square admit operadic left adjoints which are strong symmetric monoidal. For the upper horizontal arrow this is  \cite[Corollary 3.8]{Nikolaus}. 
The inclusion $\Fun^\calK(\c,\Dd) \subseteq \Fun(\c,\Dd)$ of $\infty$-categories admits a left adjoint and thus is a reflective subcategory. To show the existence of a symmetric monoidal left adjoint we invoke the criterion given in \cite[Example 2.12(3)]{Nikolaus}: 
we need to argue that if $G: \c \to \Dd$ preserves $\calK$-shaped limits, then for every other functor $F: \c \to \Dd$, the internal hom from $F$ to $G$ in $\Fun(\c,\Dd)^\otimes$ preserves $\calK$-shaped limits as well. For the internal hom functor we have the formula
\[ \underline{\map}_{\Fun(\c,\Dd)}(F,G)(X) = \int_{Y\in \c } \underline{\map}_\Dd\Big(F(Y),G(X\otimes Y)\Big),\]
this is proven  in \cite[Proposition 3.11]{Nikolaus}. 
The tensor bifunctor of $\c$ preserves by assumption $\calK$-shaped limits in each variable separately, the functor $G$ preserves $\calK$-shaped limits as well as the internal hom and the end. Thus the internal hom does preserve $\calK$-shaped limits in $X$.

Hence we have shown that in Diagram (\ref{diagramp}) above the right vertical functor admits a symmetric monoidal left adjoint.
The same argument works for the left vertical functor. Finally the left adjoint of the lower horizontal morphism is then obtained as the localization of the left adjoint to the upper one using the symmetric monoidal universal property of the localization functor $\Fun(\c,\Dd) \to \Fun^\calK(\c,\Dd)$.

Thus the lax symmetric monoidal functor 
\[ i^*: \Fun^\calK(\c_{/\emptyset},\Dd) \to \Fun^\calK(\c,\Dd) \]
is an  equivalence of underlying $\infty$-categories and admits a symmetric monoidal left adjoint. It follows that the left adjoint is an equivalence of symmetric monoidal $\infty$-categories which follows from the fact that it is an underlying equivalence and symmetric monoidal. Thus the operadic right adjoint $i^*$ is an equivalence of symmetric monoidal  $\infty$-categories as well and in particular a symmetric monoidal functor (not only lax). 
As a result we obtain an equivalence 
\[\Alg_{\e_\infty}(i^*): \xymatrix{ \Alg_{\e_\infty}(\Fun^\calK(\c_{/\emptyset},\Dd)) \ar[r]^-{\simeq} & \Alg_{\e_\infty}(\Fun^\calK(\c,\Dd)) }.\]
Since the inclusions $\Fun^\calK(\c,\Dd) \subseteq \Fun(\c,\Dd)$ (and likewise for $\c_{/\emptyset}$) are operadically fully faithful we deduce from \cite{Glasman} that we have equivalences 
\[ \Alg_{\e_\infty}(\Fun^\calK(\c,\Dd)) \simeq \Fun_\lax^\calK(\c,\Dd) \]
and 
\[ \Alg_{\e_\infty}(\Fun^\calK(\c_{/\emptyset},\Dd)) \simeq \Fun_\lax^\calK(\c_{/\emptyset},\Dd). \]
Putting everything together proves the proposition.
\end{proof}

\begin{Rmk}
One can show that the last proposition is true for more general $\Dd$, namely for $\infty$-operads which admit $\calK$-shaped operadic limits and are operadically pointed. The last proposition never used anything about the symmetric monoidal structure on $\c_{/\emptyset}$ other than the existence of a symmetric monoidal refinement of the canonical map $\c \to \c_{/\emptyset}$. This shows that the symmetric monoidal structure on $\c_{/\emptyset}$ is essentially unique in the appropriate sense.
\end{Rmk}

We now want to apply this abstract criterion to the case $\c = \N\ccu$. Then we have an equivalence  $\N\cc\simeq \c_{/\emptyset}$ given by sending a non-unital $C^*$-algebra $A$ to its unitalization $A^+ \to \C$. Under this equivalence the symmetric monoidal structure $\c_{/\emptyset}$ corresponds to the usual tensor product of non-unital $C^*$-algebras. This follows either by directly comparing the definitions (these are ordinary categories) or using the uniqueness assertion made in the last remark. Now we recall the results of \cite{Laures2} and \cite{Laures} saying that $L$-theory for unital rings with involution admits a symmetric monoidal structure.
There it is shown that the functor
\[ \xymatrix{\ringsinv \ar[r]^-{L} & \Sp^\Sigma_1}\] has a lax symmetric monoidal refinement, where $\Sp_1^\Sigma$ denotes the symmetric monoidal model category of symmetric spectra. It thus follows that the induced functor
\[ \xymatrix{\N\ccu \ar[r] & \N\ringsinv \ar[r]^-{L} & \Sp}\]
of $\infty$-categories also admits a lax symmetric monoidal refinement because the functor 
\[\xymatrix{\N\ccu \ar[r] & \N\ringsinv }\] 
is lax symmetric monoidal as well.

\begin{Prop}\label{L theory non unital monoidal}
The  $L$-theory functor $L \in \Fun_\lax^\calK(\N\ccu,\Sp)$ admits an essentially unique refinement to a functor $L \in \Fun_\lax^\calK(\N\cc,\Sp)$.
\end{Prop}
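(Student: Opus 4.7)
The plan is to deduce this proposition directly from the abstract machinery established in \cref{symmetric structure on slice}, applied to $\c = \N\ccu$ and $\Dd = \Sp$. Under the unitalization equivalence $A \mapsto (A^+ \to \C)$ the identification $(\N\ccu)_{/\C} \simeq \N\cc$ is immediate at the $1$-categorical level (\cref{lemma unitalization}) and hence passes to $\infty$-categories. So the essential task is to verify the hypotheses of \cref{symmetric structure on slice} and then check that the resulting extension coincides with the fiber-sequence definition given in Section \ref{L-theory of CAlg}.

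First I would verify the hypotheses on the source: the $\infty$-category $\N\ccu$ has $\C$ as initial object, admits $\calK$-shaped limits (finite products are direct sums, and split pullbacks exist), and the maximal tensor product preserves $\calK$-shaped limits in each variable separately. Finite products are preserved because tensoring with a fixed unital $C^*$-algebra $A$ commutes with finite direct sums, and split pullbacks are preserved because by \cref{prop tensor exact} (together with the remark on Schochet fibrations) tensoring with $A$ preserves short exact sequences and sends splittings to splittings. On the target side, $\Sp$ is presentably symmetric monoidal and pointed.

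Second, I would verify $L \in \Fun_\lax^\calK(\N\ccu,\Sp)$. The lax symmetric monoidal structure is the composition of the lax symmetric monoidal functors $\N\ccu \to \N\ringsinv \xrightarrow{L} \Sp$, the second coming from the point-set construction via ad-theories in \cite{Laures, Laures2} as recalled in \cref{properties of L-theory}(5). Preservation of $\calK$-shaped limits reduces to preservation of finite products, which is \cref{properties of L-theory}(3), and preservation of split pullbacks, which is \cref{split exact L-theory}; note that the hypothesis that $2$ be invertible there is automatic for unital complex $C^*$-algebras since $\tfrac{1}{2}\cdot 1$ lies in the algebra.

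Invoking \cref{symmetric structure on slice} now produces an essentially unique refinement $\overline{L} \in \Fun_\lax^\calK(\N\cc,\Sp)$. The final step is to check that $\overline{L}$ agrees with the non-unital $L$-theory functor defined in Section \ref{L-theory of CAlg}: by the explicit formula in the remark following \cref{bladddd}, the extension sends $(A^+ \to \C)$ to the pullback of $L(A^+) \to L\C \leftarrow 0$, which is precisely $\fib(L(A^+) \to L\C) = LA$. The main obstacle I anticipate is bookkeeping in the third step, specifically confirming that the symmetric monoidal structure produced on $(\N\ccu)_{/\C}$ by \cref{blaeeeee} matches the usual maximal tensor product on $\N\cc$; this should follow from the essential uniqueness assertion in the remark after \cref{symmetric structure on slice}, since both structures make the canonical functor $\N\ccu \to \N\cc$ (adjoining a unit) lax symmetric monoidal in a compatible way.
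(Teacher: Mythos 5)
Your proof is correct and follows essentially the same route as the paper's: identify $(\N\ccu)_{/\C}$ with $\N\cc$ via unitalization, match the symmetric monoidal structures, and invoke \cref{symmetric structure on slice} with $\Dd = \Sp$. You simply spell out the hypothesis checks (existence and preservation of $\calK$-shaped limits, $2$ being invertible in unital $C^*$-algebras so \cref{split exact L-theory} applies) and the agreement with the fiber-sequence definition via the remark after \cref{bladddd}, which the paper leaves implicit.
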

\begin{proof}
This follows immediately from the observation made above that for $\c = \N\ccu$ the slice category $\c_{/\emptyset}$ is equivalent to $\N\cc$ and \cref{symmetric structure on slice}, where we use that the category $\Sp$ is a presentably symmetric monoidal $\infty$-category.
\end{proof}

%\end{appendix}

\bibliographystyle{amsalpha}
\bibliography{mybib}

\end{document}